\documentclass{article}

\usepackage{amssymb}
\usepackage{amsmath}
\usepackage{amsthm}
\usepackage{enumerate}
\usepackage{url}
\usepackage{framed}
\usepackage{hyperref}

\usepackage{draftwatermark}
\SetWatermarkScale{3}
\theoremstyle{definition}
\newtheorem{defi}{Definition}[section]
\newtheorem{example}[defi]{\textit{Example}}

\theoremstyle{plain}
\newtheorem{thm}[defi]{Theorem}
\newtheorem{lemm}[defi]{Lemma}

\newtheorem{prop}[defi]{Proposition}

\theoremstyle{remark}
\newtheorem{rem}[defi]{Remark}



\newenvironment{theorem}{\begin{framed}\begin{thm}}{\end{thm}\end{framed}}
\newenvironment{lemma}[1][]{\begin{framed}\begin{lemm}[#1]}{\end{lemm}\end{framed}}
\newenvironment{remark}{
\begin{rem}}{\end{rem}
}
\newenvironment{definition}{
\begin{defi}}{\end{defi}
}
\newenvironment{proposition}{\begin{framed}\begin{prop}}{\end{prop}\end{framed}}

\numberwithin{equation}{section}

\newcommand{\Bo}{\mathcal{B}}
\newcommand{\C}{\mathbb{C}}
\newcommand{\N}{\mathbb{N}}
\newcommand{\R}{\mathbb{R}}
\newcommand{\ide}{\operatorname{I}}
\newcommand{\Semi}{$C_0$-semigroup }
\newcommand{\DA}{D(A)}
\newcommand{\DAt}{D(A^{2})}
\newcommand{\CL}{C_{\Lambda}}
\newcommand{\gL}{g_{\Lambda}(A)}
\newcommand{\gLo}{g_{1,\Lambda}(A)}
\newcommand{\gLt}{g_{2,\Lambda}(A)}
\newcommand{\gLot}{(g_{1}g_{2})_{\Lambda}(A)}
\newcommand{\RlA}{R(\lambda,A)}
\newcommand{\RmA}{R(\mu,A)}
\newcommand{\Ltwo}{L^{2}(0,\infty)}
\newcommand{\LX}{L^2((0,\infty),X)}
\newcommand{\LH}{L^2((0,\infty),H)}
\newcommand{\LY}{L^2((0,\infty),Y)}
\newcommand{\Lap}{\mathfrak{L}}
\newcommand{\Hinf}{\mathcal{H}_{-}^{\infty}}
\newcommand{\Hinfone}{\mathcal{H}_{-,1}^{\infty}}
\newcommand{\Ht}{\mathcal{H}^{2}}
\newcommand{\Hto}{\mathcal{H}_{\perp}^{2}}
\newcommand{\Ds}{\mathfrak{D}}
\newcommand{\HB}{\mathcal{H}^{\Bo}}

\begin{document}

\title{Weakly admissible $\mathcal{H}^{\infty}(\C_{-})$-calculus on general Banach spaces}

\author{Felix L. Schwenninger\thanks{corresponding author, f.l.schwenninger@utwente.nl} \footnote{Dept. of Applied Mathematics, University of Twente, The Netherlands}
\and Hans Zwart\footnotemark[\value{footnote}]}
\date{July 17, 2012}
\maketitle
\section*{Abstract}
We show that, given a Banach space and a generator of an exponentially stable $C_{0}$-semigroup, a weakly admissible operator $g(A)$ can be defined for any $g$ bounded, analytic function on the left half-plane. This yields an (unbounded) functional calculus. The construction uses a Toeplitz operator and is motivated by system theory. In separable Hilbert spaces, we even get admissibility.  Furthermore, it is investigated when a bounded calculus can be guaranteed. For this we introduce the new notion of exact observability by direction. Finally, it is shown that the calculus coincides with one for half-plane-operators.\newline
This is an extension of the paper\footnote{F. Schwenninger, H. Zwart. \textit{Weakly admissible $\Hinf$ calculus on reflexive Banach spaces} in press, Indag. Math. DOI:10.1016/j.indag.2012.04.005, 2012.}.
\medskip\\

\textit{Keywords:} $H^{\infty}$ functional calculus, Operator semigroup, Toeplitz operator, Weak admissibility, Exact observability by direction

\section{Introduction}

In operator theory we encounter the task of `evaluating' a (scalar-valued) function $f$ where the argument is the operator $A$. 
Simple examples are polynomials, such as the square $A^{2}$, or rational functions, such as, $(\alpha\ide- A)^{-1}$ with $\alpha\in\C$. 
Functional calculus is the field that covers the assignment $f\mapsto f(A)$ for given classes of operators and functions. Beginning with the calculus for self-adjoint operators by von Neumann, \cite{vonneumann}, many classes of operators and functions have been investigated. In general, a  functional calculus should extend a homomorphism which maps from an algebra of functions to the linear space of operators. Furthermore, it should be consistent with the `classical' definitions of rational functions.
Our goal is to construct a  calculus for functions in $\Hinf$, i.e., functions which are bounded and analytic on the left half-plane of $\C$. For the operator $A$, we take a generator of an exponentially stable $C_{0}$-semigroup. The interest for this class lies in numerical analysis and system theory. \newline 
Let us consider the Toeplitz operator $M_{g}:\Ltwo\rightarrow\Ltwo$ with symbol $g\in\Hinf$. By definition, $M_{g}f=\Lap^{-1}\Pi(g\cdot\Lap(f))$, where $\Lap$ is the Laplace transform and $\Pi$ denotes the projection onto $\Ht$, the Hardy space on the right half plane. Since for fixed $a<0$
 \begin{equation*}
 g(s)\cdot\Lap(e^{at})(s)=\frac{g(s)}{s-a}=\frac{g(a)}{s-a}+\frac{g(s)-g(a)}{s-a},
 \end{equation*}
where the last sum is an orthogonal decomposition in $\Ht$ and $\Hto$, we conclude that
 \begin{equation}\label{toeplitzproperty}
 M_{g}(e^{at})=g(a)e^{at}.
 \end{equation}
 In system theoretical words, `exponential input yields exponential output'. Obviously, $g\mapsto g(a)$ is a homomorphism. Our idea is to replace the exponential by the semigroup $e^{At}=T(t)$. In fact,  we show that the formally defined function
 \begin{equation*}
 y(t)=M_{g}(T(.)x_{0})(t)
\end{equation*}
can be seen as the \textit{output} of the linear system
\begin{align*}
\dot{x}(t)={}&Ax(t),\qquad x(0)=x_{0}\\
y(t)={}&Cx(t)
\end{align*}
for some (unbounded) operator $C$. Thus, formally $y(t)=CT(t)x_{0}$. This means that $C$ takes the role of $g(a)$ in (\ref{toeplitzproperty}). Hence, the task is to find $C$ given the \textit{output mapping} $x_{0}\mapsto y(t)$. By G. Weiss, \cite{WeissAdmissible}, this can be done uniquely, incorporating the notion of \textit{admissibility}, see Lemma \ref{weiss}.
\newline The work for separable Hilbert spaces by Zwart,  \cite{ZwartAdmissible}, serves as the main motivation. The aim of this paper is to give a general approach for reflexive Banach spaces. The lack of the Hilbert space structure leads to a weak formulation which will be introduced in Section \ref{sec:calcBanach}. In general, this yields a calculus of \textit{weakly admissible} operators. Then, we turn to the task of giving sufficient conditions  on $A$ that guarantee  bounded $g(A)$ for all $g\in\Hinf$, Section \ref{sec:boundedcalc}. In Subsection \ref{sec:obsvobbd} a connection to the results for the `strong' calculus from \cite{ZwartAdmissible} is established and we see that the weak approach extends the separable Hilbert space case.\newline
Section \ref{sec:furtherresults} is intended to round up the paper by investigating the relation to the calculus for half-plane operators and to give some additional properties. 
 \subsection{The natural $\mathcal{H}^{\infty}$-calculus}\label{naturalcalc}
 Not only in the view of system theory, the class of bounded analytic  functions has attracted much interest in functional calculus in the last decades. Early work was done by McIntosh, \cite{mcintoshHinf}, or can be found for instance in \cite{cowlingdoustmcintoshyagi}. The considered operators are \textit{sectorial} and the main idea is to extend the \textit{Riesz-Dunford}-calculus. We refer to the book by Haase, \cite{haasesectorial}, for an extensive overview. For the generator $A$ of an exponentially stable semigroups, $-A$ is sectorial of angle $\pi/2$, hence, there exists a \textit{natural} (sectorial) calculus (for $A$) for bounded, analytic functions on a larger sector (containing the left half plane). However, since the spectrum of $A$ lies in a half-plane bounded away from the imaginary axis, the appropriate notion is the one of a \textit{half-plane operator} which has been studied in \cite{battyhaasemubeen},  \cite{haasehalfplaneoperators} and \cite{mubeenPhD}. In this context, there exists a \textit{half-plane}-calculus on $\Hinf$ for $A$ being the generator of an exponentially stable semigroup. A brief introduction will be given in Section \ref{subsection:Relationnaturalcalc}.\newline
 In general, it is not clear whether an $\mathcal{H}^{\infty}$-calculus is unique. At least if it is bounded and shares some continuity property, this can be guaranteed, see page 116 in \cite{haasesectorial}.
 \subsection{Setting}
In the following paragraph, we state the setting and recall some notions we are going to use.\\
Let $(X,\|.\|)$ be a Banach space and denote its dual by $X'$. For $x\in X$, $y\in X'$, let $\langle y,x\rangle=\langle x,y\rangle_{X,X'}= y(x)$. If $X_{2}$ is also a Banach space,  the Banach algebra of bounded operators from $X$ to $X_{2}$ is denoted by $\Bo(X,X_{2})$ (or $\Bo(X)$ if $X=X_{2}$). Let $T(.)$ be an exponentially stable \Semi with growth bound  $\omega$. $A$ denotes the generator of $T(.)$. The Banach space $\DA$ equipped with the graph norm of $A$ will be referred to by $(X_{1},\|.\|_{1})$.  For an extensive introduction to semigroups we refer to the book of Engel and Nagel, \cite{engelnagel}.  By $\Lap$ we denote the Laplace transform and by $\Hinf$ we refer to the Banach algebra of bounded holomorphic (complex-valued-)functions on the left half-plane $\C_{-}=\{z\in\C:\Re(z)<0\}$. For $Y$ a Hilbert space, the Hardy spaces $\Ht(Y)$ and $\Hto(Y)$  are defined as the Laplace transforms of $\LY$ and $L^{2}((-\infty,0),Y)$, respectively. One can identify the elements in $\Ht(Y),\Hto(Y)$ with their (limit) boundary functions on the imaginary axis. These limit functions are square integrable and there exists an orthogonal projection $\Pi_{Y}:L^{2}(i\R,Y)\rightarrow \Ht(Y)$ onto $\Ht(Y)$ with kernel $\Hto(Y)$.  For $Y=\C$ we write $\Ht=\Ht(\C)$, $\Pi=\Pi_{\C}$ and so on. Similarly, elements of $\Hinf$ can be identified with essentially bounded functions on $i\R$.\newline
In the following let $\sigma_{\tau}:\LY\rightarrow\LY$, $\tau\geq0$, denote the left shift,
\begin{equation}\label{sigmatau}
		\sigma_{\tau}f=f(.+\tau).
\end{equation} 
	\begin{definition}Let $Y$ be a Hilbert space.
	A linear function $\mathfrak{D}:X\rightarrow \LY$ is called an  \textbf{output mapping} for the \Semi $T(.)$ if
     		\begin{itemize}
		\item $\mathfrak{D}$ is bounded,
		\item 
		for all $\tau\geq0$ and $x\in X$
		    \begin{equation}\label{propsigmatau}
		       \sigma_{\tau}(\mathfrak{D}x)=\mathfrak{D}(T(\tau)x).
		     \end{equation}
		 \end{itemize}
       \end{definition}
All well posed output mappings that we are going to use correspond to the semigroup $T(.)$. In system theory this notion is often named \textit{well-posed infinite-time output mapping}.
\newline
Next, we state a result of G. Weiss, \cite{WeissAdmissible}, which is fundamental for the construction of our functional calculus.
\begin{lemma}[Weiss]
 \label{weiss}Let $Y$ be a Hilbert space.
   For the output mapping $\Ds :X\rightarrow \LY$ there exists a unique $C\in\Bo(X_1,Y)$ and such that
   \begin{equation*}
   			CT(.)x=\Ds x,
   \end{equation*}
   for all $x\in\DA$. This implies that $C$ is \textbf{admissible}, i.e., $\exists m_{1}> 0$ such that
   	\begin{equation*}
             \|CT(.)x\|_{\LY}\leq m_{1}\|x\| \qquad \forall x\in\DA.
          \end{equation*}
 \end{lemma}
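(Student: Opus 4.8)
The strategy is to build the candidate operator $C$ directly from the output mapping $\Ds$ and then verify the two claims (the identity $CT(\cdot)x=\Ds x$ on $\DA$, and uniqueness/admissibility) separately.

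First I would construct $C$. The natural guess is $Cx = (\Ds x)(0)$, but $\Ds x\in\LY$ is only defined up to null sets, so pointwise evaluation at $0$ is meaningless without regularity. The key observation is that for $x\in\DA$ the map $t\mapsto T(t)x$ is continuously differentiable, and the shift-covariance \eqref{propsigmatau} forces $\Ds x$ to inherit regularity: formally, $(\Ds x)(t) = (\sigma_t\Ds x)(0) = (\Ds T(t)x)(0)$, so $\Ds x$ should agree a.e.\ with the continuous function $t\mapsto C T(t)x$. To make this rigorous I would fix $x\in\DA$ and integrate. Using \eqref{propsigmatau}, for small $h>0$,
\begin{equation*}
\frac{1}{h}\int_0^h (\Ds x)(t+s)\,ds = \frac{1}{h}\int_0^h (\Ds T(s)x)(t)\,ds,
\end{equation*}
and the right-hand side, viewed as an element of $\LY$, converges as $h\to 0^+$ to $\Ds x$ evaluated in a Lebesgue-point sense; meanwhile boundedness of $\Ds$ together with continuity of $s\mapsto T(s)x$ lets me identify the limit. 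Concretely, I would show the $Y$-valued function
\begin{equation*}
\Phi(t) := \int_0^t (\Ds x)(s)\,ds
\end{equation*}
is continuously differentiable on $[0,\infty)$ with $\Phi'(t) = \Ds(T(t)x)(\cdot)$ interpreted correctly, and then define $Cx$ as the right derivative of $\Phi$ at $0$, i.e.\ $Cx = \lim_{t\to 0^+}\frac{1}{t}\int_0^t(\Ds x)(s)\,ds$. One checks this limit exists for $x\in\DA$ because $s\mapsto \Ds(T(s)x)$ is continuous from $[0,\infty)$ into $\LY$ (continuity of $T(\cdot)$ into $X_1$ plus boundedness of $\Ds$), hence the averaged values have a limit.

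Next I would verify $CT(\cdot)x = \Ds x$ for $x\in\DA$. By the definition of $C$ and the covariance property, for a.e.\ $t$,
\begin{equation*}
CT(t)x = \lim_{h\to 0^+}\frac{1}{h}\int_0^h (\Ds T(t)x)(s)\,ds = \lim_{h\to 0^+}\frac{1}{h}\int_0^h (\Ds x)(t+s)\,ds = (\Ds x)(t),
\end{equation*}
where the last equality holds at every Lebesgue point of $\Ds x$, hence a.e. Boundedness of $C$ from $X_1$ to $Y$ follows from the closed graph theorem, or more directly by estimating $\|Cx\|_Y \le \liminf_h \frac{1}{\sqrt h}\|\Ds T(\cdot)x\|_{L^2(0,h;Y)} \le (\text{const})\,\|x\|_1$ using boundedness of $\Ds$ and continuity of $T(\cdot)x$ near $0$. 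Admissibility is then immediate: $\|CT(\cdot)x\|_{\LY} = \|\Ds x\|_{\LY}\le \|\Ds\|\,\|x\|$, giving $m_1 = \|\Ds\|$. Uniqueness is the easy part: if $C_1,C_2\in\Bo(X_1,Y)$ both satisfy $C_jT(\cdot)x=\Ds x$ on $\DA$, then $C_1T(t)x = C_2T(t)x$ for a.e.\ $t$ and all $x\in\DA$; letting $t\to 0^+$ and using continuity of $t\mapsto T(t)x$ in $X_1$ together with boundedness of the $C_j$ yields $C_1 x = C_2 x$ on the dense set $\DA$, hence $C_1=C_2$.

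The main obstacle is the first step — extracting a genuine bounded operator $C\in\Bo(X_1,Y)$ from the merely-$L^2$ object $\Ds x$. The subtlety is that one must upgrade the a.e.-defined function $\Ds x$ to something with a well-defined trace at $t=0$, and this works \emph{only} on $\DA$, where the extra smoothness of the orbit $T(\cdot)x$ is available; the estimate controlling $Cx$ must be in the graph norm $\|\cdot\|_1$, not $\|\cdot\|$, and keeping track of exactly where the domain hypothesis is used is the delicate point. Everything else (the covariance manipulations, the a.e.\ identity, uniqueness) is routine once $C$ is in hand. For the full argument one can of course simply cite Weiss \cite{WeissAdmissible}.
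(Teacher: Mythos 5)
The paper offers no proof of this lemma (it simply cites Weiss), so your proposal must stand on its own. Its overall architecture --- define $Cx$ as the averaged trace of $\Ds x$ at $t=0$, derive $CT(\cdot)x=\Ds x$ from the shift covariance at Lebesgue points, get admissibility for free from $\|\Ds x\|_{\LY}\leq\|\Ds\|\,\|x\|$, and uniqueness from continuity of $t\mapsto C_jT(t)x$ in $X_1$ --- is the right one and is essentially Weiss's. However, the step you yourself flag as ``the delicate point'' is exactly where the argument has a genuine gap. You claim the limit $\lim_{h\to0^+}\frac1h\int_0^h(\Ds x)(s)\,ds$ exists for $x\in\DA$ ``because $s\mapsto\Ds(T(s)x)$ is continuous from $[0,\infty)$ into $\LY$.'' That continuity holds for \emph{every} $x\in X$ (it needs only strong continuity of $T(\cdot)$ in $X$ and boundedness of $\Ds$), and by the covariance it is equivalent to $\|\sigma_s\Ds x-\Ds x\|_{\LY}\to0$, which is automatic for any $L^2$ function; it carries no information about the behaviour of $\Ds x$ near $t=0$ and cannot yield the averaged limit --- if it did, $C$ would extend boundedly to all of $X$, which is false in general and is precisely why admissibility is a nontrivial notion. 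The same problem infects your norm estimate: $\frac1{\sqrt h}\|\Ds x\|_{L^2((0,h),Y)}$ stays bounded as $h\to0$ only if $\Ds x$ is essentially bounded near $0$, which is again the thing to be proved.

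The missing ingredient is quantitative and uses $x\in\DA$ in an essential way: from the covariance, $\sigma_s\Ds x-\Ds x=\Ds(T(s)x-x)=\Ds\bigl(\int_0^sT(r)Ax\,dr\bigr)=\int_0^s\sigma_r\,\Ds(Ax)\,dr$, so that $(\Ds x)(t+s)-(\Ds x)(t)=\int_t^{t+s}(\Ds Ax)(r)\,dr$ for a.e.\ $t$ and all $s\geq0$. Hence $\Ds x$ agrees a.e.\ with the continuous function $t\mapsto c+\int_0^t(\Ds Ax)(r)\,dr$ for some $c\in Y$ (equivalently, $\Ds x$ lies in $H^1_{loc}((0,\infty),Y)$ with weak derivative $\Ds(Ax)$), and one sets $Cx:=c$; the trace estimate on $(0,1)$ then gives $\|Cx\|_Y\leq c_0\|\Ds\|(\|x\|+\|Ax\|)$, i.e.\ $C\in\Bo(X_1,Y)$. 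With this inserted, the remainder of your argument (the a.e.\ identity, admissibility with $m_1=\|\Ds\|$, and uniqueness via continuity at $t=0$) goes through as written.
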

 In order to use the previous lemma, we will define an output mapping via a Toeplitz operator. Therefore, we need the following notions and results which were obtained by Zwart in \cite{ZwartAdmissible}.
 \begin{definition}\label{def:Mg}
 Let $H$ be a separable Hilbert space.
 For a function $g\in\Hinf$, define the Toeplitz operator 
 \begin{equation*}
 			M_g:\LH\rightarrow\LH,f\mapsto \mathfrak{L}^{-1}\Pi_{H}(g\cdot\mathfrak{L}f),
	\end{equation*}
where $\mathfrak{L}^{-1}$ denotes the inverse Laplace transform and $\Pi_{H}$ is the orthogonal projection mentioned in the beginning.
\end{definition}
\begin{lemma}
\label{le:propMg}
Let $H$ be a separable Hilbert space and $g,h\in\Hinf$. Then, the following properties hold:
\begin{enumerate}[i.]
			\item \label{boundofToeplitz}$M_g\in\Bo(\LH)$ and $\|M_{g}\|\leq\|g\|_{\infty}$. \label{propMg1}
			\item $\sigma_{\tau}M_{g}=M_{g}\sigma_{\tau}$ for all $\tau\geq0$. \label{sigmaMg}
			\item $M_{g}B=BM_{g}$ for all $B\in\Bo(H)$, i.e., for all $f\in\LH$
			\begin{equation*}
				M_{g}(Bf)=B(M_{g}f),
			\end{equation*}
			where $(Bf)(t)=B(f(t))$ for all $t\geq0$.
			\item $M_{g\cdot h}=M_{g}M_{h}$.
	\end{enumerate}
\end{lemma}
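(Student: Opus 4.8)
The plan is to verify the four properties in order, exploiting at each stage the two basic facts behind the construction: the Laplace transform $\Lap$ is (up to a constant) a unitary from $\LH$ onto $\Ht(H)$, carrying the left shift $\sigma_\tau$ to multiplication by $e^{-\tau s}$, and $\Pi_H$ is an orthogonal projection onto $\Ht(H)$ with kernel $\Hto(H)$.

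\textbf{Property \ref{boundofToeplitz}.} First I would observe that $M_g$ is well defined: for $f\in\LH$, $\Lap f\in\Ht(H)$, and since $g\in\Hinf$ is essentially bounded on $i\R$, the product $g\cdot\Lap f$ lies in $L^2(i\R,H)$, so $\Pi_H(g\cdot\Lap f)\in\Ht(H)$ and the inverse Laplace transform returns an element of $\LH$. Boundedness and the norm estimate then follow from
\[
\|M_g f\|_{\LH}=\|\Pi_H(g\cdot\Lap f)\|_{L^2(i\R,H)}\le\|g\cdot\Lap f\|_{L^2(i\R,H)}\le\|g\|_\infty\|\Lap f\|_{\Ht(H)}=\|g\|_\infty\|f\|_{\LH},
\]
using that $\Pi_H$ is a contraction and $\Lap$ an isometry.

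\textbf{Properties \ref{sigmaMg} and the commutation with $B\in\Bo(H)$.} For \ref{sigmaMg} I would translate the shift through the Laplace transform: $\Lap(\sigma_\tau f)(s)=e^{\tau s}\big(\Lap f(s)-\int_0^\tau e^{-st}f(t)\,dt\big)$, or more cleanly work on the Fourier/boundary side where $\sigma_\tau$ becomes multiplication by the scalar $e^{-\tau s}$ followed by the projection $\Pi_H$. The key point is that multiplication by the inner function $e^{-\tau s}$ maps $\Ht(H)$ into itself, so it commutes with $\Pi_H$ in the sense needed; combined with the fact that $g\in\Hinf$ and the scalar $e^{-\tau s}$ commute as multipliers, one gets $\sigma_\tau M_g=M_g\sigma_\tau$. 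For commutation with a fixed $B\in\Bo(H)$, the reason is structural: $\Lap$, $\Pi_H$, and multiplication by the scalar-valued symbol $g$ all act ``diagonally'' with respect to the $H$-variable, i.e.\ they commute with the pointwise application of any bounded operator on $H$; this is checked first on simple functions of the form $\phi(\cdot)h$ with $\phi\in\Ltwo$, $h\in H$, and then extended by density and boundedness.

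\textbf{Property \ref{propMg1}, i.e.\ $M_{gh}=M_gM_h$.} This is the one genuine subtlety, since in general $\Pi_H(g\cdot\Pi_H(h\cdot F))\neq\Pi_H(gh\cdot F)$ for arbitrary multipliers. The point that saves us is that $h\in\Hinf$ is analytic on $\C_-$, so multiplication by $h$ maps $\Ht(H)$ into $\Ht(H)$; hence for $F\in\Ht(H)$ we may write $hF=\Pi_H(hF)+(I-\Pi_H)(hF)$ with the second term lying in $\Hto(H)$, and then multiplying by $g$ (again an $\Hinf$-multiplier, so $g\cdot\Hto(H)\subseteq\Hto(H)$) and applying $\Pi_H$ kills the second term: $\Pi_H(g\cdot\Pi_H(hF))=\Pi_H(g(hF))=\Pi_H(gh\,F)$. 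Translating back through $\Lap^{-1}$ gives $M_gM_h f=M_{gh}f$. I expect this invariance argument --- that $\Hinf$-symbols preserve both $\Ht(H)$ and its orthogonal complement --- to be the main obstacle, in that it requires care about boundary-value identification and the anti-analytic half-space, whereas the other three items are essentially bookkeeping once the unitarity of $\Lap$ and the contractivity of $\Pi_H$ are in hand.
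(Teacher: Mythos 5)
Your argument is correct, but note that the paper itself gives no proof of this lemma --- it simply cites \cite{ZwartAdmissible} --- so there is no internal proof to compare against; what you have written is essentially the standard argument that the reference supplies, and the key orthogonal-decomposition idea already appears implicitly in the paper's introduction, where $g(s)/(s-a)$ is split into its $\Ht$ and $\Hto$ components to derive $M_g(e^{at})=g(a)e^{at}$. Your treatment of the genuinely delicate point, part iv., is exactly right: the identity $\Pi_H\bigl(g\cdot\Pi_H(hF)\bigr)=\Pi_H(ghF)$ holds because the discarded component $(I-\Pi_H)(hF)$ lies in $\Hto(H)$ and multiplication by $g\in\Hinf$ preserves $\Hto(H)$ (both being analytic on $\C_-$), so it is annihilated by the final projection. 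Parts i.\ and iii.\ are routine as you say.

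One small correction to part ii.: with the paper's conventions ($\Ht(H)=\Lap(\LH)$, symbols analytic on $\C_-$), the left shift satisfies $\sigma_\tau=\Lap^{-1}\Pi_H\bigl(e^{\tau s}\cdot\Lap(\cdot)\bigr)$, i.e.\ the relevant multiplier is $e^{+\tau s}$, which for $\tau\geq0$ is bounded analytic on $\C_-$ and hence is itself an $\Hinf$ symbol; your $e^{-\tau s}$ has the wrong sign, and the invariance you should invoke is that of the \emph{kernel} $\Hto(H)$ under the multiplier, not of the range $\Ht(H)$ (the latter does not by itself let you interchange the multiplier with $\Pi_H$). The cleanest route is to observe that $\sigma_\tau=M_{g_\tau}$ with $g_\tau(s)=e^{\tau s}\in\Hinf$ --- this is verified in the paper's proof of Theorem \ref{2mainthm}.\ref{mainthm2:3} --- after which ii.\ is an immediate consequence of iv.\ and the commutativity of the symbols. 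With that repair the proposal is complete.
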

\begin{proof} See \cite{ZwartAdmissible}.
\end{proof}

 \section{$\Hinf$-calculus on Banach spaces}\label{sec:calcBanach}
 
       In the following let $X$ be a Banach space. Furthermore, let $T(.)$  be an exponentially stable \Semi on $X$ and let $g$ be a function in $\Hinf$.
        \subsection{General weak approach}
         \begin{definition}
       \label{def:B}Let $Z$ be a Banach space.
       A  bilinear map $B:X\times Z \rightarrow \Ltwo$  is called a \textbf{weakly admissible output mapping} for $T(.)$ if it is bounded, i.e., $\exists b>0$ such that
           \begin{equation*}
           \|B(x,y)\|_{\Ltwo}\leq b\|x\|\|y\|_{Z} \qquad \forall x\in X, y\in Z,
           \end{equation*}
           and if it has the following property
           \begin{equation}
           \label{propB}
           \sigma_{\tau}B(x,y)=B(T(\tau)x,y) \qquad \forall \tau>0.
           \end{equation}
        \end{definition}
        An example for $B$ is given by $B(x,y)=\langle y,T(.)x\rangle_{X',X}$ for $x\in X, y\in X'$ with $Z=X'$. 
        This fulfills the assumptions of Definition \ref{def:B} because $T(.)$ is exponentially stable.
        \begin{definition}
       For a weakly admissible $B$ we define for fixed $g\in\Hinf,y\in Z$
       \begin{equation}
          \Ds_{g,y}^{B}x=M_{g}(B(x,y))
         \end{equation}
         for $x\in X$ and where $M_{g}\in\Bo(\Ltwo)$ (see Definition \ref{def:Mg} with $H=\C$). 
         \end{definition}
        In the following, let $g$ always be a function in $\Hinf$.
         \begin{lemma}
         \label{le:weakweiss}Let $B$ be a weakly admissible output mapping and $y\in Z$. Then, 
         \begin{equation*}
         \Ds_{g,y}^{B}:X\rightarrow \Ltwo,x\mapsto M_{g}(B(x,y))
         \end{equation*}
          is an output mapping for $T(.)$ and there exists a unique operator 
         $L_{g,y}^{B}\in\Bo (X_{1},\C)$ such that
         
         			\begin{equation}
			\label{eq:weakweiss}
			   \Ds_{g,y}^{B} x_{1}=L_{g,y}^{B}T(.)x_{1}
			 \end{equation}
	for $x_{1}\in \DA$. Furthermore, for $x_{0}\in X$,
		\begin{equation}
		\label{eq:weakweiss2}
		  \Lap[\Ds_{g,y}^{B}x_{0}](s)= L_{g,y}^{B}(s\ide-A)^{-1}x_{0} 
		  \end{equation}
		  on the half-plane $\C_{+}=\left\{z\in\C:\Re(z)>0\right\}$.\newline
		   Hence, for fixed $s\in\C_{+}$,
	                  \begin{align}
	       		  L_{g,y}^{B}x_{1}={}&s\Lap[\Ds_{g,y}^{B}x_{1}](s)-\Lap[\Ds_{g,y}^{B}Ax_{1}](s )\notag\\
			  			   ={}&\int_{0}^{\infty}[\Ds_{g,y}^{B}(s\ide-A)x_{1}](t)e^{-st} \ dt\notag\\
						   ={}&\Lap[\Ds_{g,y}^{B}(s\ide-A)x_{1}](s) \label{LgyisLap}
			\end{align}
	for $x_{1}\in\DA$.           
	 \end{lemma}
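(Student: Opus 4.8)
The plan is to verify that $\Ds_{g,y}^{B}$ satisfies the hypotheses of Weiss's lemma and then to transport the resulting identity into the frequency domain. First I would check that, for fixed $y\in Z$, the map $x\mapsto\Ds_{g,y}^{B}x=M_{g}(B(x,y))$ is a bounded linear operator $X\to\Ltwo$: linearity is immediate from the bilinearity of $B$ together with the linearity of $M_{g}$, and the estimate
\[
\|\Ds_{g,y}^{B}x\|_{\Ltwo}\leq\|M_{g}\|\,\|B(x,y)\|_{\Ltwo}\leq\|g\|_{\infty}\,b\,\|y\|_{Z}\,\|x\|
\]
follows from the norm bound $\|M_{g}\|\leq\|g\|_{\infty}$ in Lemma \ref{le:propMg} and the weak admissibility of $B$. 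For the shift covariance \eqref{propsigmatau} I would compute, for $\tau\geq0$ and $x\in X$,
\[
\sigma_{\tau}(\Ds_{g,y}^{B}x)=\sigma_{\tau}M_{g}(B(x,y))=M_{g}\sigma_{\tau}(B(x,y))=M_{g}(B(T(\tau)x,y))=\Ds_{g,y}^{B}(T(\tau)x),
\]
where the second equality is the shift-commutation property of $M_{g}$ from Lemma \ref{le:propMg} (with $H=\C$) and the third is \eqref{propB}. Hence $\Ds_{g,y}^{B}$ is an output mapping, and Lemma \ref{weiss} applied with $Y=\C$ yields the unique $L_{g,y}^{B}\in\Bo(X_{1},\C)$ with \eqref{eq:weakweiss}.

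Next I would establish \eqref{eq:weakweiss2}, treating first $x_{1}\in\DA$. For $\Re(s)>0$ the Bochner integral $\int_{0}^{\infty}e^{-st}T(t)x_{1}\,dt$ converges in the graph norm of $X_{1}$ — here one uses exponential stability together with $AT(t)x_{1}=T(t)Ax_{1}$ and the closedness of $A$ — and equals $(s\ide-A)^{-1}x_{1}$. Since $L_{g,y}^{B}$ is continuous on $X_{1}$, it may be moved inside the integral, so by \eqref{eq:weakweiss}
\[
L_{g,y}^{B}(s\ide-A)^{-1}x_{1}=\int_{0}^{\infty}e^{-st}L_{g,y}^{B}T(t)x_{1}\,dt=\int_{0}^{\infty}e^{-st}[\Ds_{g,y}^{B}x_{1}](t)\,dt=\Lap[\Ds_{g,y}^{B}x_{1}](s).
\]
To pass to arbitrary $x_{0}\in X$ I would use that $\DA$ is dense in $X$: the map $x_{0}\mapsto L_{g,y}^{B}(s\ide-A)^{-1}x_{0}$ is bounded on $X$ as a composition of bounded maps (the resolvent maps $X$ boundedly into $X_{1}$), while $x_{0}\mapsto\Lap[\Ds_{g,y}^{B}x_{0}](s)$ is bounded on $X$ because $\Ds_{g,y}^{B}$ maps $X$ boundedly into $\Ltwo$ and evaluation at a point $s$ with $\Re(s)>0$ is a bounded functional on $\Lap(\Ltwo)=\Ht$ (by Cauchy--Schwarz, since $t\mapsto e^{-st}\in\Ltwo$). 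Since the two bounded maps agree on the dense set $\DA$, they agree on $X$, which is \eqref{eq:weakweiss2}.

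Finally, for \eqref{LgyisLap} I would fix $s\in\C_{+}$ and $x_{1}\in\DA$, apply \eqref{eq:weakweiss2} to $x_{0}=Ax_{1}$, and use the resolvent identity $(s\ide-A)^{-1}Ax_{1}=s(s\ide-A)^{-1}x_{1}-x_{1}$ together with \eqref{eq:weakweiss2} for $x_{0}=x_{1}$; this gives $\Lap[\Ds_{g,y}^{B}Ax_{1}](s)=s\,\Lap[\Ds_{g,y}^{B}x_{1}](s)-L_{g,y}^{B}x_{1}$, which rearranges to the first displayed equality in \eqref{LgyisLap}. The remaining two equalities are then immediate from the linearity of $\Ds_{g,y}^{B}$ in the state variable, namely $\Ds_{g,y}^{B}(s\ide-A)x_{1}=s\,\Ds_{g,y}^{B}x_{1}-\Ds_{g,y}^{B}Ax_{1}$, and the definition of the Laplace transform. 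I expect the only delicate point to be the interchange in the proof of \eqref{eq:weakweiss2}: one needs the resolvent to be represented by a Bochner integral converging in $X_{1}$ — not merely in $X$ — so that the functional $L_{g,y}^{B}$, which is defined and bounded only on $X_{1}$, can legitimately be pulled through the integral; the accompanying density argument further relies on the (elementary but essential) fact that point evaluation of the Laplace transform of an $\Ltwo$-function on $\C_{+}$ is continuous.
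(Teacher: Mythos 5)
Your proposal is correct and follows essentially the same route as the paper: verify the output-mapping axioms via Lemma \ref{le:propMg} and (\ref{propB}), invoke Lemma \ref{weiss} with $Y=\C$, take the Laplace transform of (\ref{eq:weakweiss}) using convergence of the resolvent integral in $X_{1}$, extend to all of $X$ by density, and then specialize to obtain (\ref{LgyisLap}). The only cosmetic difference is at the very end, where the paper simply substitutes $x_{0}=(s\ide-A)x_{1}$ into (\ref{eq:weakweiss2}) while you reach the same identity via $x_{0}=Ax_{1}$ and the resolvent identity; your explicit attention to the $X_{1}$-valued Bochner integral and to the continuity of point evaluation on $\Ht$ fills in details the paper leaves implicit.
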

	 \begin{proof}
	 By the properties of $B$ and $M_g$ (see Lemma \ref{le:propMg}) we see that $\Ds_{g,y}^{B}$ is an output mapping. In fact,  for fixed $y$, $\Ds_{g,y}^{B}$ is bounded as composition of the bounded operators $M_g\in\Bo(\Ltwo))$ and $B(.,y)$. Furthermore, by Lemma  \ref{le:propMg}.\ref{sigmaMg}., and (\ref{propB})    
	 	 \begin{align*}
	 \sigma_{\tau}\Ds_{g,y}^{B}x={}&\sigma_{\tau}M_{g}(B(x,y))\\
	 					   ={}&M_{g}(\sigma_{\tau}(B(x,y)))\\
						   ={}&M_{g}(B(T(\tau)x,y))\\
						   ={}&\Ds_{g,y}^{B}T(\tau)x
	\end{align*} 
	for all $x\in X$.
	  Now that we know that $\Ds_{g,y}^{B}$ is an output mapping, Lemma \ref{weiss} yields the first assertion.
	 Taking the Laplace transform of (\ref{eq:weakweiss}), which exists for $\Re(s)>0$  since $\Ds_{g,y}^{B}\in\Ltwo$, and, using that the integrals exist in $X_{1}$, we deduce
	 \begin{equation*}
	     \Lap[\Ds_{g,y}^{B}x_{1}](.)=L_{g,y}^{B}(.\ide-A)^{-1}x_{1}
	  \end{equation*}
	  on $\C_{+}$ for $x_{1}\in\DA$. Since $\DA$ is dense, (\ref{eq:weakweiss2}) follows. Fixing $s\in\C_{+}$ and taking $x_{0}=(s\ide-A)x_{1}$ yields the last assertion.
	  \end{proof}
                    
         \begin{lemma}\label{Lem2}The following assertions hold
           \begin{enumerate}[i.]
           	\item For all $x\in X$ and $y\in Z$
           		\begin{equation}
           		\label{ineq:Dgy}
	               	\|\Ds_{g,y}^{B}x\|_{\Ltwo}\leq b\|g\|_{\infty}\|y\|_{Z}\|x\|.
           		\end{equation}
		\item\label{weakConvOutMap} If $Z=X'$ and $B(x,y)=\langle y,T(.)x\rangle$ for $x\in X,y\in X'$, then, for a sequence $y_{n}\in X',n\in\N$,
		\begin{equation*}
		\Big(\langle y_{n},x\rangle\stackrel{n\to\infty}{\longrightarrow}\langle y,x\rangle\ \forall x\in X\Big)\Longrightarrow \Big(\Ds_{g,y_{n}}^{B}x\stackrel{n\to\infty}{\longrightarrow}\Ds_{g,y}^{B}x\  \forall x\in X\Big), 
		\end{equation*}
		that is, for fixed $x\in X$, the mapping $y\mapsto\Ds_{g,y}^{B}x$ is $weak*$-continuous.
			             \end{enumerate}
		\end{lemma}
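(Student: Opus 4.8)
For part (i), the plan is to simply chain together the two norm estimates we already have. By Lemma \ref{le:propMg}.\ref{propMg1}., $\|M_g\|_{\Bo(\Ltwo)} \le \|g\|_\infty$, so $\|\Ds_{g,y}^B x\|_{\Ltwo} = \|M_g(B(x,y))\|_{\Ltwo} \le \|g\|_\infty \|B(x,y)\|_{\Ltwo}$. Then the boundedness of the weakly admissible output mapping $B$ from Definition \ref{def:B} gives $\|B(x,y)\|_{\Ltwo} \le b\|x\|\|y\|_Z$, and combining yields (\ref{ineq:Dgy}). This is immediate and there is no obstacle.

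For part (ii), fix $x \in X$ and a sequence $y_n \in X'$ converging weak* to $y$. The goal is to show $\Ds_{g,y_n}^B x \to \Ds_{g,y}^B x$ in $\Ltwo$. By linearity of $B$ in the second argument (it is bilinear) and linearity of $M_g$, we have $\Ds_{g,y_n}^B x - \Ds_{g,y}^B x = M_g(B(x, y_n - y))$, so by part (i) it suffices to show $B(x, y_n - y) \to 0$ in $\Ltwo$, i.e. $\langle y_n - y, T(\cdot)x\rangle \to 0$ in $\Ltwo$. Pointwise in $t$ this is clear: $\langle y_n - y, T(t)x\rangle \to 0$ for each fixed $t$ since $T(t)x \in X$ and $y_n \to y$ weak*. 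To upgrade pointwise convergence to $\Ltwo$-convergence I would invoke dominated convergence: by the uniform boundedness principle, weak* convergence of $y_n$ implies $\sup_n \|y_n\|_{X'} =: M < \infty$, hence $|\langle y_n - y, T(t)x\rangle| \le (M + \|y\|_{X'})\|T(t)x\| \le (M+\|y\|_{X'}) C e^{-\alpha t}\|x\|$ for constants $C \ge 1$, $\alpha > 0$ coming from exponential stability of $T(\cdot)$. The dominating function $t \mapsto C e^{-\alpha t}$ is in $\Ltwo$, so the dominated convergence theorem gives $\|\langle y_n - y, T(\cdot)x\rangle\|_{\Ltwo} \to 0$, and applying part (i) (with $y$ replaced by $y_n - y$) finishes the proof.

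The only subtle point, and the one I would be most careful about, is the appeal to uniform boundedness: weak* convergence of a sequence in a dual space $X'$ does imply norm-boundedness of the sequence (each $y_n$, viewed as a functional on $X$, is a pointwise limit along the sequence is not needed — rather, $\sup_n |\langle y_n, x\rangle| < \infty$ for each $x$, so Banach–Steinhaus applies to the family $\{y_n\} \subset \Bo(X,\C) = X'$). Given that, the exponential decay bound furnishes the integrable (indeed $\Ltwo$) majorant needed for dominated convergence. Everything else is routine bilinearity and the already-established estimate (\ref{ineq:Dgy}); I would phrase the conclusion as "$y \mapsto \Ds_{g,y}^B x$ is weak*-to-norm sequentially continuous", which is exactly the stated claim.
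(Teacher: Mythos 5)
Your proposal is correct and follows essentially the same route as the paper: part (i) by chaining $\|M_g\|\leq\|g\|_\infty$ with the boundedness of $B$, and part (ii) by combining pointwise weak* convergence, the uniform boundedness principle, and dominated convergence with the exponentially decaying majorant, then applying the boundedness of $M_g$. No gaps.
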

		\begin{proof}
			The first assertion follows immediately from the definition of $\Ds_{g,y}^{B}$, part \ref{propMg1}.\ of 
			Lemma \ref{le:propMg}, and the boundedness of $B$. 
			To see \ref{weakConvOutMap}., assume that $y_{n}\in X'$ converges to $y$ in the $weak*$-topology. 
			Therefore, for fixed $x\in X$,
			\begin{equation*}
				\langle y_{n}-y,T(t)x\rangle_{X',X}\stackrel{n\to\infty}{\longrightarrow}0 \qquad\forall t\geq0.
			\end{equation*}
			Since $weak*$-convergence implies that $y_{n}$ is bounded (by uniform boundedness principle), 
			there exists a constant $\tilde{M}>0$ such that
			\begin{equation*}
			|\langle y_{n}-y,T(t)x\rangle|\leq\tilde{M}\cdot\|T(t)x\|\leq\tilde{M}e^{\omega t}\qquad \forall t\geq0,
			\end{equation*}
			where $\omega<0$ is the growth bound of the semigroup. 
			Hence, dominated convergence yields that $\langle y_{n}-y,T(t)x\rangle\to0$ in $\Ltwo$ as $n\to\infty$.
			By definition, $\Ds_{g,y}^{B}x=M_{g}\langle y,T(t)x\rangle$, and since $M_{g}\in\Bo(\Ltwo)$ we conclude
			\begin{equation*}
				\Ds_{g,y_{n}}^{B}x\stackrel{n\to\infty}{\longrightarrow}\Ds_{g,y}^{B}x.
			\end{equation*}
		\end{proof}
 Using the lemma above, we can deduce properties of the mapping $y\mapsto L_{g,y}^{B}x$.
 	
		\begin{lemma} \label{Lem3}The following assertions hold
		\begin{enumerate}[i.]
		\item \label{lemCx1} There exists $b_{2}>0$ such that
                		\begin{equation}
               		 \label{ineq:Lgy}
	            	   |L_{g,y}^{B}x_{1}|\leq b_{2}\|g\|_{\infty}\|y\|_{Z}\|x_{1}\|_{1}\qquad x_{1}\in\DA,y\in Z.
           		\end{equation}
		   	\item \label{lemCx2} 	For fixed $x_{1}\in \DA$ the mapping
             		\begin{equation*}
                 	 L_{g,.}^{B}x_{1}:Z\rightarrow \C, \qquad y\mapsto L_{g,y}^{B}x_{1}
             		\end{equation*}
             is linear and bounded, hence in $Z'$, i.e. there exists a unique element 
             $f_{x_{1}}$ in $Z'$ such that 
             \begin{equation}
             \label{Lgyfx}
                   L_{g,y}^{B}x_{1}=\langle y,f_{x_{1}}\rangle  _{Z,Z'}	\qquad \forall y\in Z.
              \end{equation}

             		\item \label{lemCx3}
				If $Z=X'$ and $B(x,y)=\langle y,T(.)x\rangle$ for $x\in X,y\in X'$, then for every $x_{1}\in\DA$ there exists a unique element $f_{x_{1}}\in X$ such that 
				\begin{equation} 
				\label{LgyfxX}
				 L_{g,y}^{B}x_{1}=\langle y,f_{x_{1}}\rangle  _{X',X}	\qquad \forall y\in X'.
             			 \end{equation}
					
             \end{enumerate}
                 \end{lemma}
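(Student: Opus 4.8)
The plan is to establish the three parts in order, since each builds on the previous. For part \ref{lemCx1}, I would start from the last identity in Lemma \ref{le:weakweiss}, namely $L_{g,y}^{B}x_{1}=\Lap[\Ds_{g,y}^{B}(s\ide-A)x_{1}](s)$, valid for any fixed $s\in\C_{+}$. Fixing once and for all some $s_{0}\in\C_{+}$ (say $s_{0}=1$), I would bound the Laplace transform at $s_{0}$ by the $\Ltwo$-norm of $\Ds_{g,y}^{B}(s_{0}\ide-A)x_{1}$ via Cauchy--Schwarz against $t\mapsto e^{-\Re(s_{0})t}$, which lies in $\Ltwo$. Then apply the estimate (\ref{ineq:Dgy}) from Lemma \ref{Lem2}.\ref{lemCx1} (the inequality labeled \eqref{ineq:Dgy}) to $\Ds_{g,y}^{B}(s_{0}\ide-A)x_{1}$, giving a bound by $b\|g\|_{\infty}\|y\|_{Z}\|(s_{0}\ide-A)x_{1}\|$. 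Since $\|(s_{0}\ide-A)x_{1}\|\leq (|s_{0}|+1)\|x_{1}\|_{1}$ (or more simply $\le \|x_1\| + \|Ax_1\| + |s_0|\|x_1\| \le C\|x_1\|_1$), collecting constants into a single $b_{2}>0$ independent of $g$, $y$, $x_{1}$ yields (\ref{ineq:Lgy}). This part is essentially routine bookkeeping.

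For part \ref{lemCx2}, linearity of $y\mapsto L_{g,y}^{B}x_{1}$ must be checked: this follows because $y\mapsto B(x,y)$ is linear (bilinearity of $B$), $M_{g}$ is linear, hence $y\mapsto\Ds_{g,y}^{B}x$ is linear, and then $y\mapsto L_{g,y}^{B}x_{1}$ is linear since $L_{g,y}^{B}$ is obtained from $\Ds_{g,y}^{B}$ by the (linear) operations in Lemma \ref{le:weakweiss} — concretely via the Laplace-transform identity (\ref{LgyisLap}), which depends linearly on $y$. Boundedness is exactly (\ref{ineq:Lgy}) with $x_{1}$ fixed: $|L_{g,y}^{B}x_{1}|\leq (b_{2}\|g\|_{\infty}\|x_{1}\|_{1})\|y\|_{Z}$. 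Thus $L_{g,\cdot}^{B}x_{1}\in Z'$, and the Riesz-type identification just names this functional $f_{x_{1}}$, giving (\ref{Lgyfx}). Uniqueness of $f_{x_{1}}$ is automatic since it is simply the functional itself.

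For part \ref{lemCx3}, the point is the upgrade from $f_{x_{1}}\in X''$ (which is what part \ref{lemCx2} gives when $Z=X'$) to $f_{x_{1}}\in X$, i.e.\ that the functional on $X'$ defined by $y\mapsto L_{g,y}^{B}x_{1}$ is $weak*$-continuous, hence represented by an element of $X$ itself. This is the main obstacle, and it is exactly where Lemma \ref{Lem2}.\ref{weakConvOutMap} enters: I would take a bounded net (or sequence, if one is careful about the $weak*$-topology) $y_{n}\to y$ in $weak*$, apply Lemma \ref{Lem2}.\ref{weakConvOutMap} to get $\Ds_{g,y_{n}}^{B}x\to\Ds_{g,y}^{B}x$ in $\Ltwo$ for every $x$, and then push this through the Laplace-transform representation (\ref{LgyisLap}) of $L_{g,y}^{B}x_{1}$ — continuity of $\Lap[\cdot](s_{0})$ on $\Ltwo$ gives $L_{g,y_{n}}^{B}x_{1}\to L_{g,y}^{B}x_{1}$. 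Hence $y\mapsto L_{g,y}^{B}x_{1}$ is $weak*$-continuous on $X'$; by the standard characterization of $weak*$-continuous functionals (the predual identification), there is a unique $f_{x_{1}}\in X$ with $L_{g,y}^{B}x_{1}=\langle y,f_{x_{1}}\rangle_{X',X}$ for all $y\in X'$. The one subtlety to watch is that $weak*$-continuity must be verified on bounded sets and then invoked correctly (e.g.\ via the Krein--Šmulian theorem, or directly since the functional is already known to be norm-bounded by \ref{lemCx1}), so that sequential $weak*$-continuity suffices — this is where I would be most careful.
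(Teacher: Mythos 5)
Your proposal follows essentially the same route as the paper's proof: part i via the Laplace-transform identity \eqref{LgyisLap} at a fixed $s\in\C_{+}$ plus Cauchy--Schwarz and \eqref{ineq:Dgy}, part ii from bilinearity of $B$ and linearity of $M_{g}$, and part iii by verifying $weak*$-continuity of $y\mapsto L_{g,y}^{B}x_{1}$ through Lemma \ref{Lem2}.\ref{weakConvOutMap} and invoking the predual characterization (the paper cites \cite[Theorem IV.8.1]{yosida}). You are in fact slightly more careful than the paper, which only checks sequential $weak*$-convergence and does not address the net-versus-sequence issue you flag at the end.
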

                
           \begin{proof}
           	For \ref{lemCx1}., fix an $s$ with $\Re(s)>0$. Note that by Cauchy-Schwarz and (\ref{ineq:Dgy})
	    \begin{align*}
	     |\int_{0}^{\infty}[\Ds_{g,y}^{B}(s\ide-A)x_{1}](t)e^{-st} \ dt|\leq{}& b_{s}\|\Ds_{g,y}^{B}(s\ide-A)x_{1}\|_{\Ltwo}\\
	     										\leq{}&b_{s}b\|g\|_{\infty}\|y\|_{Z}\|(s\ide-A)x_{1}\|
	     \end{align*}
	     for some constant $b_{s}>0$. By Lemma \ref{le:weakweiss}, the left hand side equals $|L_{g,y}^{B}x_{1}|$ and we obtain (\ref{ineq:Lgy}) because  $(s\ide-A)\in\Bo(X_{1},X)$. Having  \ref{lemCx1}., for  \ref{lemCx2}.\ it remains to show the linearity of  $L_{g,.}x_{1}$ for fixed $x_{1}\in\DA$. By the linearity of $B(x_{0},.)$ and $M_{g}$ it is clear that $\Ds_{g,.}x_{0}$ is linear,  for fixed $x_{0}\in X$. Hence, using (\ref{LgyisLap}) again for some fixed $s\in\C_{+}$, we have for $y,z\in Z$ and $\lambda\in\C$
	     \begin{align*}
	     L_{g,y+\lambda z}x_{1}={}&\Lap[\Ds_{g,y+\lambda z}^{B}(s\ide-A)x_{1}](s)\\
	     					={}&\Lap[\Ds_{g,y}^{B}(s\ide-A)x_{1}](s)+\lambda\Lap[\Ds_{g,z}^{B}(s\ide-A)x_{1}](s)\\
						={}&L_{g,y}x_{1}+\lambda L_{g,z}x_{1}.
	   \end{align*}
	   In  \ref{lemCx3}.\ we have to show that the $f_{x_{1}}\in Z'=X''$ from \ref{lemCx2}.\ can be identified with an element in $X$.
	   For that, it suffices to prove that $L_{g,.}x_{1}:X'\rightarrow\C$ is $weak*$-continuous (see for instance \cite[Theorem IV.8.1.]{yosida}). So, let $y_{n}\in X'$ converge to $y\in X'$ in the $weak*$-topology. 
	   From (\ref{LgyisLap}) we have that for $x_{1}\in\DA, s\in\C_{+}$
	   \begin{align*}
	   |L_{g,y_{n}}x_{1}-L_{g,y}x_{1}|={}&|\int_{0}^{\infty}[\Ds_{g,y_{n}}^{B}(s\ide-A)x_{1}-\Ds_{g,y}^{B}(s\ide-A)x_{1}](t)e^{-st} \ dt|\\
	   						\leq{}&b_{s}\cdot\|\Ds_{g,y_{n}}^{B}(s\ide-A)x_{1}-\Ds_{g,y}^{B}(s\ide-A)x_{1}\|_{\Ltwo},
	   \end{align*}
	   where we used Cauchy-Schwarz. By Lemma \ref{Lem2}.\ref{weakConvOutMap}.\ we know that the term on the right hand side goes to zero as $n\to\infty$. Hence, $L_{g,.}x_{1}$ is $weak*$-continuous.
	     \end{proof}
            Having in mind \ref{lemCx2}.\ (or \ref{lemCx3}.) of the previous lemma, we consider the map 
                 \begin{equation}\label{mapg(A)}
                 g^{B}(A):\DA\rightarrow Z'\ (\text{or }X),\qquad x_{1}\mapsto f_{x_{1}}=L_{g,.}^{B}x_{1}
                 \end{equation}
                 It is linear since $L_{g,y}^{B}x_{1}$ is linear in $x_{1}$ and by (\ref{ineq:Lgy}) it is bounded, i.e., $g^{B}(A)\in\Bo(X_{1},Z')$ (or $\Bo(X_{1},X)$). Now, we are able to state the main result of the general weak approach.
                             
        \begin{theorem}\label{thm:g(A)}
                 Let $A$ be the generator of  an exponentially stable \Semi $T(.)$ and let $B:X\times Z \rightarrow \Ltwo$ be a weakly admissible output mapping. Then for $g\in\Hinf$ the following assertions hold
             \begin{enumerate}[i.]                 
                  \item \label{thm:g(A)item1}
                 There exists a unique operator $g^{B}(A)\in\Bo(X_{1},Z')$ such that 
                 \begin{equation}
                 \label{eq:thm:g(A)}
                       \Ds_{g,y}^{B}x_{1}=\langle y,g^{B}(A)T(.)x_{1}\rangle  _{Z,Z'}
                 \end{equation}      
                for all $y\in Z$ and $x_{1}\in\DA$. 
                 \item  \label{thm:g(A)item4}If $Z=X'$ and $B(x,y)=\langle y,T(.)x\rangle$ for $x\in X,y\in X'$, 
                 	  then the operator $g^{B}(A)$ is in $\Bo(X_{1},X)$ and  
                 \begin{equation}
                 \label{eq:thm:g(A)X}
                       \Ds_{g,y}^{B}x_{1}=\langle y,g^{B}(A)T(.)x_{1}\rangle  _{X',X}
                 \end{equation}      

                \item There exists a constant $\alpha>0$ such that for all $x\in X$
                \begin{equation}\label{weakWeiss}
                \left\|g^{B}(A)(s\ide-A)^{-1}x\right\|_{Z'}\leq\frac{\alpha}{\sqrt{\Re(s)}}\|g\|_{\infty}\|x\|.
                \end{equation}
                \item \label{thm:g(A)2}
                If $Z=X'$ and 
                \begin{equation}
                \label{commuteBT}
                	B(T(t)x,y)=B(x,T'(t)y)\qquad \text{for all }t\geq0,x\in X, y\in X'
	      \end{equation}
	      then 
                 $g^{B}(A)$ commutes with the semigroup, i.e., 
                  \begin{equation}
                  \label{g(A)commuteT}
                       \langle y,g^{B}(A)T(t)x_{1}\rangle  _{X',X''}=\langle T(t)'y,g^{B}(A)x_{1}\rangle  _{X',X''} ,
                  \end{equation}
                  for $x_{1}\in \DA$, $y\in X'$ and all $t\geq0$.
                \end{enumerate}
                \end{theorem}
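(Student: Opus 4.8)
The plan is to assemble the four items from the lemmas already proved, treating items (i) and (ii) as essentially a repackaging of Lemma \ref{Lem3}, and then doing genuine work only for (iii) and (iv).

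For item (i), I would simply observe that the map $g^{B}(A)$ defined in (\ref{mapg(A)}) is the operator we want: by Lemma \ref{Lem3}.\ref{lemCx2}., for each $x_{1}\in\DA$ there is a unique $f_{x_{1}}=g^{B}(A)x_{1}\in Z'$ with $L_{g,y}^{B}x_{1}=\langle y,g^{B}(A)x_{1}\rangle_{Z,Z'}$ for all $y\in Z$, and boundedness $g^{B}(A)\in\Bo(X_{1},Z')$ follows from (\ref{ineq:Lgy}). Combining this with the defining identity (\ref{eq:weakweiss}) of Lemma \ref{le:weakweiss}, namely $\Ds_{g,y}^{B}x_{1}=L_{g,y}^{B}T(.)x_{1}$ — applied pointwise in $t$, so that $L_{g,y}^{B}(T(t)x_{1})=\langle y,g^{B}(A)T(t)x_{1}\rangle$ since $T(t)x_{1}\in\DA$ — gives (\ref{eq:thm:g(A)}). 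Uniqueness is immediate: if $\tilde g(A)$ also satisfies (\ref{eq:thm:g(A)}), then $\langle y,(g^{B}(A)-\tilde g(A))T(t)x_{1}\rangle=0$ for all $y\in Z$, all $t$, all $x_{1}\in\DA$; at $t=0$ and using that $\DA$ is dense in $X$ and $Z$ separates points of... wait, $Z'$ — one needs $Z$ to separate the points of $Z'$ it can reach, which it does by the Hahn–Banach duality $\langle y,\cdot\rangle$, so the difference vanishes. Item (ii) is the specialization $Z=X'$, $B(x,y)=\langle y,T(.)x\rangle$: here Lemma \ref{Lem3}.\ref{lemCx3}.\ upgrades $f_{x_{1}}\in X''$ to $f_{x_{1}}\in X$ (via the $weak*$-continuity argument already given there), so $g^{B}(A)\in\Bo(X_{1},X)$ and (\ref{eq:thm:g(A)X}) is just (\ref{eq:thm:g(A)}) reread with the duality $X',X$.

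For item (iii), the point is the $\frac{1}{\sqrt{\Re(s)}}$ rate, which is sharper than naively plugging (\ref{ineq:Lgy}) into the resolvent. I would start from (\ref{eq:weakweiss2}), $\Lap[\Ds_{g,y}^{B}x_{0}](s)=L_{g,y}^{B}(s\ide-A)^{-1}x_{0}=\langle y,g^{B}(A)(s\ide-A)^{-1}x_{0}\rangle$, and estimate the left side by Cauchy–Schwarz for the Laplace transform: $|\Lap[h](s)|=|\int_{0}^{\infty}h(t)e^{-st}\,dt|\leq\|h\|_{\Ltwo}\cdot\|e^{-\Re(s)\,\cdot}\|_{\Ltwo}=\|h\|_{\Ltwo}/\sqrt{2\Re(s)}$. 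Applying this with $h=\Ds_{g,y}^{B}x_{0}$ and using the bound (\ref{ineq:Dgy}), $\|\Ds_{g,y}^{B}x_{0}\|_{\Ltwo}\leq b\|g\|_{\infty}\|y\|_{Z}\|x_{0}\|$, yields $|\langle y,g^{B}(A)(s\ide-A)^{-1}x_{0}\rangle|\leq\frac{b}{\sqrt{2\Re(s)}}\|g\|_{\infty}\|y\|_{Z}\|x_{0}\|$, and taking the supremum over $\|y\|_{Z}\leq1$ gives (\ref{weakWeiss}) with $\alpha=b/\sqrt2$. The only subtlety is that $(s\ide-A)^{-1}x_{0}$ need not lie in... it does lie in $\DA$, so $g^{B}(A)$ is applicable; fine.

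For item (iv), under the extra hypothesis (\ref{commuteBT}) I want $\langle y,g^{B}(A)T(t)x_{1}\rangle=\langle T(t)'y,g^{B}(A)x_{1}\rangle$. The strategy is to compute $\Ds_{g,y}^{B}(T(t)x_{1})$ in two ways. On one hand, directly from Theorem item (i)/(ii), $\Ds_{g,y}^{B}(T(t)x_{1})(\tau)=\langle y,g^{B}(A)T(\tau)T(t)x_{1}\rangle=\langle y,g^{B}(A)T(\tau+t)x_{1}\rangle$. On the other hand, $\Ds_{g,y}^{B}(T(t)x_{1})=M_{g}(B(T(t)x_{1},y))=M_{g}(B(x_{1},T'(t)y))$ by (\ref{commuteBT}), which is $\Ds_{g,T'(t)y}^{B}x_{1}$, and by item (i) again this equals at time $\tau$ the value $\langle T'(t)y,g^{B}(A)T(\tau)x_{1}\rangle$. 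So for every $\tau\geq0$ we get $\langle y,g^{B}(A)T(\tau+t)x_{1}\rangle=\langle T'(t)y,g^{B}(A)T(\tau)x_{1}\rangle$; this is an equality of $\Ltwo$ functions of $\tau$, but both sides are continuous in $\tau$ (as $\tau\mapsto T(\tau)x_{1}$ is continuous into $X_{1}$ and $g^{B}(A)$ is bounded on $X_{1}$), so it holds for all $\tau\geq0$, and at $\tau=0$ it is precisely (\ref{g(A)commuteT}). The main obstacle here is the mild bookkeeping of when an identity of $\Ltwo$-valued maps can be evaluated pointwise — resolved by the continuity/denseness remarks — and making sure the pairing in (\ref{g(A)commuteT}) (written with $X'',$ i.e.\ $Z'$) is consistent with item (ii)'s identification when $X$ is reflexive; in the non-reflexive general case the statement as written lives in $X',X''$ and the argument above produces it verbatim.
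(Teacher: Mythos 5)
Your proposal is correct and follows essentially the same route as the paper: items (i) and (ii) are the repackaging of Lemma \ref{Lem3}.\ref{lemCx2}./\ref{lemCx3}.\ together with (\ref{eq:weakweiss}), and item (iii) is exactly the paper's Cauchy--Schwarz estimate on the Laplace transform combined with (\ref{ineq:Dgy}). For item (iv) you work directly in the time domain, comparing the two continuous representatives of $\Ds_{g,y}^{B}(T(t)x_{1})$ and evaluating at $\tau=0$, whereas the paper routes the same identity $M_{g}B(T(t)x,y)=M_{g}B(x,T'(t)y)$ through the Laplace transform at a fixed $s$ via (\ref{LgyisLap}); this is only a cosmetic difference.
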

             
                \begin{proof}
                The first assertion follows by (\ref{eq:weakweiss}) and the considerations above, see Lemma \ref{Lem3}.\ref{lemCx2}.\ and (\ref{mapg(A)}), from which we have that               
                \begin{equation*}
                L_{g,y}^{B}T(.)x=\langle y,g^{B}(A)T(.)x\rangle_{Z,Z'}.
                \end{equation*}
                Part \ref{thm:g(A)item4}.\ follows as in \ref{thm:g(A)item1}.\ but using \ref{Lem3}.\ref{lemCx3}. instead of \ref{Lem3}.\ref{lemCx2}.\newline
                 Inequality (\ref{weakWeiss}) is a consequence of (\ref{eq:weakweiss2}). In fact, for $\Re(s)>0$ we have by Cauchy-Schwarz
                 \begin{align*}
                 \left|\langle y,g^{B}(A)(s\ide-A)^{-1}\rangle _{Z,Z'}\right|={}&\left|\Lap[\Ds_{g,y}^{B}x](s)\right|\\
                 								    \leq{}&\frac{1}{\sqrt{\Re(2s)}}\left\|\Ds_{g,y}^{B}x\right\|_{\LX}\\
								    	    \leq{}&\frac{\alpha}{\sqrt{\Re(s)}}\|g\|_{\infty}\|x\|\|y\|_{Z},
	       \end{align*}
	       where in the last step we used the boundedness of the output mapping,  (\ref{ineq:Dgy}).\newline
                To see (\ref{g(A)commuteT}), we use (\ref{LgyisLap}) and (\ref{Lgyfx}). Let $t> 0$, $\Re(s)>0$, $y\in X'$ and $x_{1}\in\DA$. Then,
                 \begin{align}
                  \langle y,g^{B}(A)T(t)x_{1}\rangle  _{X',X''}={}&L_{g,y}^{B}T(t)x_{1}\notag\\
                        			        ={}&\Lap[\Ds_{g,y}^{B}(s\ide-A)T(t)x_{1}](s)\notag\\
			                          ={}&\Lap[M_{g}B(T(.)(s\ide-A)T(t)x_{1},y)](s)\label{proofThmg(A)}
	       \end{align}
	       By exploiting the additional assumption on $B$, (\ref{commuteBT}), we deduce further
	       \begin{align*}
	         \Lap[M_{g}B(T(.)(s\ide-A)T(t)x_{1},y)](s)={}&\Lap[M_{g}B(T(.)(s-A)x_{1},T'(t)y)](s)\\
						                         ={}&\Lap[\Ds_{g,T'(t)y}(s-A)x_{1}](s)\\
			         				                ={}&L_{g,T'(t)y}x_{1}\\
			                          			       ={}&\langle T(t)'y,g^{B}(A)x_{1}\rangle  _{X',X''}.
	         \end{align*}
	         Together with (\ref{proofThmg(A)}), this gives the assertion.\newline
		\end{proof}	
		                          
                 Theorem \ref{thm:g(A)} and estimate (\ref{ineq:Dgy}) motivate the introduction of the following notion.
                 \begin{definition}Let $Y$ be a Banach space.
                 An operator $C\in\Bo(X_{1},Y)$ is called \textbf{weakly admissible} if there exists an $m> 0$ such that for all $x\in\DA$ and $y\in Y'$
                  \begin{itemize}
                      \item $\langle y,CT(.)x\rangle  \in\Ltwo$ and
                      \item $\|\langle y,CT(.)x\rangle  \|_{\Ltwo}\leq m\|y\|_{Y'}\|x\|$.
                  \end{itemize}
                  \end{definition}
                  \begin{remark}\label{rem:B=CT}
                  \begin{itemize}\item
                  From this definition we get immediately that if $C\in\Bo(X_{1},Y)$ is weakly admissible, then $\tilde{B}(x,y)=\langle y,CT(.)x\rangle_{Y',Y} $ defined on $\DA\times Y'$ can be uniquely extended to a  bilinear mapping $B$ on $X\times Y'$. This $B$ fulfills the assumptions in Definition \ref{def:B} ($Z=Y'$) and because of this, $\Ds_{g,y}^{C},L_{g,y}^{C},g^{C}(A)$ will denote $\Ds_{g,y}^{B},L_{g,y}^{B},g^{B}(A)$ respectively. Note that this $B$ does not satisfy (\ref{commuteBT}) in general even if $Y=X'$.
                  \item From Theorem \ref{thm:g(A)} and (\ref{ineq:Dgy}), it follows that $g^{B}(A)$ is weakly admissible.
                     \end{itemize}
		\end{remark}
                  \begin{remark}\label{remendoffirstsection}
                  The notion of weak admissibility  and its connection to (strong) admissibility has been investigated for instance by Weiss who conjectured that the terms are equivalent. However, even for Hilbert spaces counterexamples were found, see \cite{ZwartJacobStaffans}, \cite{JacobPartingtonPott}.
                  \end{remark}

\subsection{The calculus}

              In the following we will set $Z=X'$. 
              For the rest of the paper, $g(A)$ will denote $g^{B}(A)$ for the weakly admissible mapping 
              $B(x,y)=\langle y,T(.)x\rangle  _{X',X}$. 
              Consequently, we will write $\Ds_{g,y}$ and $L_{g,y}$ when this specific $B$ is meant.
%
%
              We are going to need the following lemmata several times. 
              \begin{lemma}The operator 
              $g(A)$ is a bounded operator from $X_{1}$ to $X$ which commutes with the semigroup, i.e.,
                          \begin{equation}
                          \label{g(A)commuteT2}
                           g(A)T(t)=T(t)g(A)
                          \end{equation}
                          on $\DA$ for all $t>0$. Therefore, for $\lambda\in\rho(A)$
                          \begin{equation}
                          \label{g(A)commuteA}
                          g(A)\RlA x_{1}=\RlA g(A)x_{1} \qquad \forall x_{1}\in\DA.
                          \end{equation}
                          In particular, $g(A)\DAt\subset\DA$.
               \end{lemma}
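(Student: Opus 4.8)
The plan is to read everything off from Theorem~\ref{thm:g(A)} applied to the specific weakly admissible output mapping $B(x,y)=\langle y,T(.)x\rangle_{X',X}$, and then convert the resulting duality identities into genuine operator identities on $X$. The boundedness of $g(A)=g^{B}(A)$ as a map $X_{1}\to X$ is nothing but Theorem~\ref{thm:g(A)}.\ref{thm:g(A)item4}, which applies because here $Z=X'$ and $B$ has exactly the required form; this also records the identity $\Ds_{g,y}x_{1}=\langle y,g(A)T(.)x_{1}\rangle_{X',X}$ for $x_{1}\in\DA$, $y\in X'$, and, crucially, that $g(A)$ takes values in $X$ rather than merely in $X''$. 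Next I would observe that this $B$ satisfies the extra hypothesis (\ref{commuteBT}): for $x\in X$, $y\in X'$ and $t,s\geq0$ one has $B(T(t)x,y)(s)=\langle y,T(s)T(t)x\rangle=\langle y,T(s+t)x\rangle=\langle T'(t)y,T(s)x\rangle=B(x,T'(t)y)(s)$. (The warning in Remark~\ref{rem:B=CT} concerns $\langle y,CT(.)x\rangle$ for a \emph{general} weakly admissible $C$; here the relevant $C$ is the identity, which does commute with $T(t)$.)

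Hence Theorem~\ref{thm:g(A)}.\ref{thm:g(A)2} applies and gives $\langle y,g(A)T(t)x_{1}\rangle=\langle T'(t)y,g(A)x_{1}\rangle$ for all $x_{1}\in\DA$, $y\in X'$, $t\geq0$. Since $g(A)x_{1}\in X$, the right-hand side equals $\langle y,T(t)g(A)x_{1}\rangle$ by definition of the adjoint $T'(t)$, and as $y\in X'$ is arbitrary, Hahn--Banach gives $g(A)T(t)x_{1}=T(t)g(A)x_{1}$ in $X$, i.e.\ (\ref{g(A)commuteT2}) (the case $t=0$ being trivial).

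For (\ref{g(A)commuteA}) I would first treat $\Re\lambda$ larger than the growth bound $\omega<0$: there $\RlA x_{1}=\int_{0}^{\infty}e^{-\lambda t}T(t)x_{1}\,dt$, and since $\|e^{-\lambda t}T(t)x_{1}\|_{1}\leq Me^{(\omega-\Re\lambda)t}(\|x_{1}\|+\|Ax_{1}\|)$ this converges as a Bochner integral in $X_{1}=\DA$; pulling the bounded operator $g(A)\colon X_{1}\to X$ through the integral and using (\ref{g(A)commuteT2}) together with the integral formula for $\RlA$ on $X$ yields $g(A)\RlA x_{1}=\int_{0}^{\infty}e^{-\lambda t}T(t)g(A)x_{1}\,dt=\RlA\,g(A)x_{1}$. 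To pass to an arbitrary $\lambda\in\rho(A)$, fix $\mu$ with $\Re\mu>\omega$ and use the resolvent identity $\RlA x_{1}=\RmA x_{1}+(\mu-\lambda)\RmA\RlA x_{1}$ (all three arguments lie in $\DA$ since $\RmA$ maps $X$ into $\DA$); applying $g(A)$ and invoking the case just proved for $\mu$ — once with argument $x_{1}$, once with argument $\RlA x_{1}\in\DA$ — one finds that $w:=g(A)\RlA x_{1}$ and $v:=g(A)x_{1}$ satisfy $w=\RmA v+(\mu-\lambda)\RmA w$, i.e.\ $(\lambda\ide-A)\RmA(w-\RlA v)=0$ after rewriting $\ide-(\mu-\lambda)\RmA=(\lambda\ide-A)\RmA$; applying $\RlA$ and using injectivity of $\RmA$ gives $w=\RlA v$, which is (\ref{g(A)commuteA}) for every $\lambda\in\rho(A)$. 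The inclusion $g(A)\DAt\subset\DA$ is then immediate: for $x_{2}\in\DAt$ and any $\lambda\in\rho(A)$, put $x_{1}=(\lambda\ide-A)x_{2}\in\DA$, so that $g(A)x_{2}=g(A)\RlA x_{1}=\RlA\,g(A)x_{1}\in\DA$, since $\RlA$ maps $X$ into $\DA$.

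The delicate points here are structural rather than computational: checking that the present $B$ genuinely satisfies (\ref{commuteBT}) so that Theorem~\ref{thm:g(A)}.\ref{thm:g(A)2} can be invoked (despite the caveat in Remark~\ref{rem:B=CT}), and upgrading the $X''$-valued duality identity provided there to an honest identity in $X$, which is legitimate precisely because Theorem~\ref{thm:g(A)}.\ref{thm:g(A)item4} guarantees $g(A)$ is $X$-valued. Extending the resolvent commutation from a half-plane to all of $\rho(A)$ is routine but deserves to be spelled out, as $\rho(A)$ need not be connected; alternatively one can use analytic continuation on its unbounded component and the resolvent identity for the rest.
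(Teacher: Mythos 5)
Your proposal is correct and follows essentially the same route as the paper, which simply cites Theorem \ref{thm:g(A)} for the boundedness and the semigroup commutation and then says to take the Laplace transform of (\ref{g(A)commuteT2}) to get (\ref{g(A)commuteA}). You merely make explicit the details the paper leaves implicit: verifying (\ref{commuteBT}) for $B(x,y)=\langle y,T(.)x\rangle$, upgrading the $X''$-valued duality identity to an identity in $X$ via Theorem \ref{thm:g(A)}.\ref{thm:g(A)item4}, and extending the resolvent commutation from the half-plane $\Re\lambda>\omega$ to all of $\rho(A)$ via the resolvent identity.
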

               \begin{proof}
               The first assertions follow all directly from Theorem \ref{thm:g(A)}. To see (\ref{g(A)commuteA}) consider the Laplace transform of equation (\ref{g(A)commuteT2}).
               \end{proof}                      
              \begin{lemma}
              \label{le;compositewadm}
                Let $C\in\Bo(X_{1},Y)$ be weakly admissible. Then $Cg(A)$ is weakly admissible (in the sense that it can be extended uniquely to a weakly admissible operator from $X_{1}$ to $Y$) and
                \begin{equation}
                		Cg(A)x_{2}=g^{C}(A)x_{2}\qquad \forall x_{2}\in\DAt
	      \end{equation}
	      where $g^{C}(A)$ is the operator from (\ref{eq:thm:g(A)}) with $B(x,y)=\langle y,CT(.)x\rangle _{Y',Y}$ (see Remark \ref{rem:B=CT}).
	      \end{lemma}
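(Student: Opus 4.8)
The plan is to establish $Cg(A)x_{2}=g^{C}(A)x_{2}$ on $\DAt$ in two stages: first for an operator that is \emph{bounded on all of} $X$, where the identity is almost immediate, and then for a general (merely $X_{1}$-bounded) weakly admissible $C$ by a resolvent regularisation. The one real obstacle is that $g^{C}(A)$ is built from the Toeplitz operator $M_{g}$ acting on the output mapping $x\mapsto\langle y,CT(\cdot)x\rangle_{Y',Y}$, and there is no direct way to compare this with $g(A)$ itself unless $C$ extends to a bounded operator on $X$.

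\emph{Bounded case.} If $D\in\Bo(X,Y)$, then $\langle y,DT(\cdot)x\rangle_{Y',Y}=\langle D'y,T(\cdot)x\rangle_{X',X}$, so the associated output mapping is the canonical one evaluated at $D'y\in X'$; hence $\Ds_{g,y}^{D}=\Ds_{g,D'y}$ and therefore $L_{g,y}^{D}=L_{g,D'y}$. Combining (\ref{mapg(A)}), (\ref{Lgyfx}) and (\ref{LgyisLap}),
\[
\langle y,g^{D}(A)x_{1}\rangle_{Y',Y''}=L_{g,y}^{D}x_{1}=L_{g,D'y}x_{1}=\langle D'y,g(A)x_{1}\rangle=\langle y,Dg(A)x_{1}\rangle_{Y',Y}
\]
for all $x_{1}\in\DA$, $y\in Y'$. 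Since $Dg(A)x_{1}\in Y$ and $Y$ embeds isometrically into $Y''$, this forces $g^{D}(A)x_{1}=Dg(A)x_{1}$ on $\DA$.

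\emph{General case.} For $\mu$ large put $C_{\mu}:=C\RmA\in\Bo(X,Y)$. Since $\RmA$ commutes with $T(\cdot)$ and is the resolvent of the part of $A$ in $X_{1}$, the operators $\mu\RmA$ are uniformly bounded on $X$ and on $X_{1}$ and converge strongly to the identity on both. Fix $x_{2}\in\DAt$, $y\in Y'$ and $\lambda\in\C_{+}$. Applying the bounded case to $D=\mu C_{\mu}$ gives $g^{\mu C_{\mu}}(A)x_{2}=C\,(\mu\RmA)\,g(A)x_{2}$, and since $g(A)x_{2}\in\DA$ and $C\in\Bo(X_{1},Y)$ we get $\langle y,g^{\mu C_{\mu}}(A)x_{2}\rangle\stackrel{\mu\to\infty}{\longrightarrow}\langle y,Cg(A)x_{2}\rangle$. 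On the other hand, from $C_{\mu}T(t)=CT(t)\RmA$ one has $\Ds_{g,y}^{\mu C_{\mu}}(\lambda\ide-A)x_{2}=M_{g}\langle y,CT(\cdot)(\mu\RmA)(\lambda\ide-A)x_{2}\rangle$, and because $(\lambda\ide-A)x_{2}\in\DA$ these scalar functions converge in $\Ltwo$ — by dominated convergence, with exponential stability providing a $\mu$-independent majorant and $M_{g}$ bounded — to $\Ds_{g,y}^{C}(\lambda\ide-A)x_{2}$. Applying the bounded functional $\Lap[\cdot](\lambda)$ and (\ref{LgyisLap}),
\[
\langle y,g^{\mu C_{\mu}}(A)x_{2}\rangle=L_{g,y}^{\mu C_{\mu}}x_{2}=\Lap[\Ds_{g,y}^{\mu C_{\mu}}(\lambda\ide-A)x_{2}](\lambda)\stackrel{\mu\to\infty}{\longrightarrow}\Lap[\Ds_{g,y}^{C}(\lambda\ide-A)x_{2}](\lambda)=L_{g,y}^{C}x_{2}=\langle y,g^{C}(A)x_{2}\rangle .
\]
Comparing the two limits yields $Cg(A)x_{2}=g^{C}(A)x_{2}$ for all $x_{2}\in\DAt$; in particular $g^{C}(A)\DAt\subset Y$.

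\emph{The extension.} Finally, $\|Cg(A)x_{2}\|_{Y}=\|g^{C}(A)x_{2}\|_{Y''}\le\|g^{C}(A)\|_{\Bo(X_{1},Y'')}\|x_{2}\|_{1}$ on $\DAt$, and $\DAt$ is dense in $X_{1}$, so $Cg(A)$ extends uniquely to an operator in $\Bo(X_{1},Y)$, which by continuity coincides with $g^{C}(A)$ (its range therefore lying in $Y$). This extension is weakly admissible, because $g^{C}(A)=g^{B_{C}}(A)$ is weakly admissible as an operator into $Y''$ (Remark \ref{rem:B=CT}) and its defining estimate survives when the test functionals are restricted from $Y'''$ to $Y'$. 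The step I expect to cause the most trouble is making the two $\mu\to\infty$ passages legitimate: this is exactly where the regularisation $C_{\mu}=C\RmA$, the uniform boundedness and strong convergence of $\mu\RmA$ on $X_{1}$, and the commutation of $g(A)$ with the resolvent (and with the semigroup) are indispensable.
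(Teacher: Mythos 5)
Your proof is correct, and its first stage is exactly the paper's key observation: for an observation operator $D$ that is bounded on all of $X$ one has $\langle y,DT(\cdot)x\rangle_{Y',Y}=\langle D'y,T(\cdot)x\rangle_{X',X}$, so $\Ds_{g,y}^{D}=\Ds_{g,D'y}$ and hence $g^{D}(A)=Dg(A)$ on $\DA$. Where you diverge is in how you reduce the general $C\in\Bo(X_{1},Y)$ to this bounded case. The paper exploits the exponential stability once more: since $0\in\rho(A)$, it writes $Cg(A)T(t)x=(CA^{-1})\,g(A)T(t)(Ax)$ for $x\in\DAt$ (using (\ref{g(A)commuteA}) and $g(A)T(t)=T(t)g(A)$), applies the bounded-case identity to $D=CA^{-1}$ and the vector $Ax\in\DA$, and then undoes the factorization via $\langle (CA^{-1})'y,T(\cdot)Ax\rangle=\langle y,CT(\cdot)x\rangle$ --- a single exact chain of equalities with no limits at all. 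You instead regularize with $\mu C\RmA\in\Bo(X,Y)$ and pass to the limit $\mu\to\infty$ on both sides, which forces you to justify two convergence statements (strong convergence of $\mu\RmA$ in the graph norm applied to $g(A)x_{2}\in\DA$, and $\Ltwo$-convergence of the output functions before applying $M_{g}$ and $\Lap[\cdot](\lambda)$). Both justifications go through --- indeed the second one follows even more directly from the weak admissibility estimate $\|\langle y,CT(\cdot)z\rangle\|_{\Ltwo}\leq m\|y\|\,\|z\|$ applied to $z=(\mu\RmA-\ide)(\lambda\ide-A)x_{2}\to0$ in $X$, so no dominated-convergence argument is needed --- but the price is a substantially longer proof for the same statement. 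Your approximation route would be the natural fallback if $A$ were not boundedly invertible, though in this paper exponential stability is assumed throughout, so the exact factorization through $A^{-1}$ is always available. Your closing discussion of the extension and of weak admissibility is slightly more careful than the paper, which leaves that part to Remark \ref{rem:B=CT}; note only that the relevant estimate (\ref{ineq:Dgy}) is already stated for test functionals $y\in Z=Y'$, so no restriction from $Y'''$ is involved.
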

	      
	      \begin{proof}
	      Let $x\in\DAt$ and $y\in Y'$.  Then $Ax\in\DA$. Using (\ref{g(A)commuteA}) and that $CA^{-1}\in\Bo(X,Y)$, we obtain 
	      \begin{align}
	      \langle y,Cg(A)T(t)x\rangle_{Y',Y}  ={}&\langle y,CA^{-1}g(A)T(t)Ax\rangle_{Y',Y} \notag \\
	      		       				={}&\langle (CA^{-1})'y,g(A)T(t)Ax\rangle_{X',X}  \notag\\
							={}&\big(\Ds_{g,(CA^{-1})'y}(Ax)\big)(t)\notag\\
	                          				={}&\big(M_{g}(\langle (CA^{-1})'y,T(.)Ax\rangle_{X',X})\big)(t)\notag\\
	                          				={}&\big(M_{g}(\langle y,CT(.)x\rangle_{Y',Y})\big)(t)\label{eq:lemCg(A)}\\
	                          				={}&\big(\Ds_{g,y}^{B}x\big)(t)\notag\\
	                          				={}&\langle y,g^{C}(A)T(t)x\rangle_{Y',Y}.\notag
	      \end{align}
	     The equality holds for all $t\geq0$ point-wise since both the right and the left hand-side are continuous functions  for $x\in\DAt$. 
	      \end{proof}
	      As pointed out in Remark \ref{rem:B=CT}, $g^{C}(A)$ will not commute with the semigroup in general. However, if $C\in\Bo(X_{1},X)$ commutes with $T(.)$, then 
	      \begin{equation*}
	      B(T(t)x,y)=\langle y,CT(.)T(t)x\rangle_{X',X}=\langle T'(t)y,CT(.t)x\rangle_{X',X}=B(x,T'(t)y)
		      \end{equation*}
	      for all $t\geq0$ and $x\in X$. Hence, by Theorem \ref{thm:g(A)}.\ref{thm:g(A)2}., we conclude that $g^{C}(A)T(t)=T(t)g^{C}(A)$ for all $t\geq0$ in this case.\par
	      It may happen that $g(A)$ is even bounded in the norm of $X$. 
	      However, still it will be defined only on $\DA$ by the construction. 
	      Then, we would like to identify it with its bounded extension to $X$.
	      Moreover, although $g(A)$ will not be bounded in $X$ in general, it can be extended to a closed operator. 
	      For that, we introduce the following extension.
                     \begin{lemma}
                     Let $C$ be an operator in $\Bo(X_{1},X)$ which commutes with some (any) resolvent $\RmA=(\mu\ide-A)^{-1}$. Then, the operator
                     \begin{align}
                         &\CL x=\lim_{\lambda\to\infty}\lambda C\RlA x\label{LebExt}\\
                         &D(\CL)=\{x\in X:\text{the above limit exists}\}\notag
                     \end{align}
                     is a closed extension of $C$. $\CL$ commutes with any resolvent $\RmA$ on $D(\CL)$. This operator is called the \textbf{Lambda extension}. If $C$ is bounded in $X$, then $C_{\Lambda}\in\Bo(X)$. 
                     \end{lemma}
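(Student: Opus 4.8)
The plan is to establish the four assertions --- $\CL\supseteq C$, $\CL$ closed, $\CL$ commutes with every $\RmA$ on $D(\CL)$, and preservation of boundedness --- using throughout the standard facts that $\lambda\RlA x\to x$ in $X$ as $\lambda\to+\infty$ for \emph{every} $x\in X$, that $\lambda\RlA x\to x$ in $X_{1}$ when $x\in\DA$ (since $A\lambda\RlA x=\lambda\RlA Ax\to Ax$), and that $\lambda\RlA$ is uniformly bounded on $X$ for large $\lambda$ by exponential stability. I will also use that commuting with one resolvent forces commuting with all of them, a routine consequence of the resolvent identity; so without loss of generality $C\RmA=\RmA C$ on $X_{1}$ for every $\mu\in\rho(A)$. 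That $\CL$ extends $C$ is then immediate: for $x\in\DA$ one has $\lambda C\RlA x=C(\lambda\RlA x)\to Cx$ in $X$, because $C\in\Bo(X_{1},X)$ and $\lambda\RlA x\to x$ in the graph norm.

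For closedness, I would abbreviate $S_{\lambda}:=\lambda C\RlA$, which lies in $\Bo(X)$ for each fixed $\lambda\in\rho(A)$ (composition of $\RlA\in\Bo(X,X_{1})$ with $C\in\Bo(X_{1},X)$), so that $D(\CL)=\{x\in X:\lim_{\lambda\to\infty}S_{\lambda}x\text{ exists}\}$ and $\CL x=\lim_{\lambda\to\infty}S_{\lambda}x$ there. The key algebraic identity is $\RmA S_{\lambda}=S_{\lambda}\RmA$ on all of $X$, which holds because $\RlA x\in X_{1}$ and $C$ commutes with $\RmA$ on $X_{1}$. Now let $x_{n}\in D(\CL)$ with $x_{n}\to x$ and $\CL x_{n}\to z$ in $X$. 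Taking $\lambda\to\infty$ in $\RmA S_{\lambda}x_{n}=S_{\lambda}\RmA x_{n}$ gives $\RmA\CL x_{n}=C\RmA x_{n}$ (the right-hand side because $\RmA x_{n}\in X_{1}$), and then $n\to\infty$, using $\RmA\in\Bo(X,X_{1})$, yields $\RmA z=C\RmA x$ for every $\mu\in\rho(A)$. Putting $\mu=\lambda$ gives $S_{\lambda}x=\lambda C\RlA x=\lambda\RlA z\to z$, so $x\in D(\CL)$ with $\CL x=z$. I expect this to be the only real obstacle: the trick is to push everything through a resolvent so that the a priori unknown limit of $S_{\lambda}x$ is forced to equal $z$ via $\lambda\RlA z\to z$.

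Commutation on $D(\CL)$ then drops out of the same identity: for $x\in D(\CL)$, $\RmA S_{\lambda}x=S_{\lambda}\RmA x$; the left side converges to $\RmA\CL x$, and the right side to $C\RmA x=\CL\RmA x$ since $\RmA x\in X_{1}\subseteq D(\CL)$. Finally, if $C$ admits a bounded extension $\bar{C}\in\Bo(X)$, then for every $x\in X$ we have $\RlA x\in X_{1}$, hence $S_{\lambda}x=\lambda\bar{C}\RlA x=\bar{C}(\lambda\RlA x)\to\bar{C}x$ in $X$; thus $D(\CL)=X$ and $\CL=\bar{C}\in\Bo(X)$.
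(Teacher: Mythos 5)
Your proposal is correct and follows essentially the same route as the paper: extension via $\lambda\RlA x\to x$, closedness by pushing the unknown limit through a resolvent to obtain $\RmA z=C\RmA x$ and then recovering $z=\lim_{\lambda\to\infty}\lambda C\RlA x$, commutation from $\RmA S_\lambda=S_\lambda\RmA$, and boundedness via the unique bounded extension. The only cosmetic difference is that you justify $\lambda C\RlA x\to Cx$ on $\DA$ by graph-norm convergence of $\lambda\RlA x$, while the paper first commutes to get $\lambda\RlA Cx\to Cx$; both are valid.
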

                     \begin{proof}Recall the following property of a \Semi (see Lemma 3.4 in \cite{engelnagel})
                     \begin{equation}\label{semipropconv}
                     \lim_{\lambda\to\infty}\lambda\RlA x=x\qquad \forall x\in X.
                     \end{equation}
                     First, let $x\in\DA$. By assumption, $\lambda C\RlA x=\lambda\RlA Cx$ which converges to $Cx$ as $\lambda\to\infty$.  Thus, $\CL$ is an extension of $C$. Now, let $x\in D(\CL)$ and $\mu\in\rho(A)$. Since $\RmA$ is bounded and $C\RmA x= \RmA Cx$ on $\DA$ by assumption, we have
                     \begin{align}
                    \RmA\CL x={}&\lim_{\lambda\to\infty}\lambda\RmA C\RlA x\notag\\
                    ={}&\lim_{\lambda\to\infty}\lambda C\RlA\RmA x= \CL\RmA x.\label{le:LE2}
                     		   		   \end{align}
		Hence, we have proved that $\CL$ commutes with the resolvent. Next, we show that it is a closed operator.
		 Let $\left\{x_{n}\right\}$ be a sequence in $D(\CL)$ such that $x_{n}\to x$ and $\CL x_{n}\to z$ for $n\to\infty$. By (\ref{le:LE2})  and since $\RmA x_{n}\in\DA$,
		 \begin{equation*}
		 \RmA\CL x_{n}=\CL\RmA x_{n}=C\RmA x_{n}
		 \end{equation*}
		 for all $n\in\N$. Since $C\RmA\in\Bo(X)$, we deduce for the limit $n\to\infty$
		 \begin{equation*}
		 \RmA z=C\RmA x.
		\end{equation*}
		Multiply by $\mu$ and let $\mu\to\infty$. By (\ref{semipropconv}) the limit exists and 
		\begin{equation*}
		z=\lim_{\mu\to\infty}\mu C\RmA x
		\end{equation*}
		holds. Thus, $x\in D(\CL)$ and $\CL x=z$.\newline
		If $C$ is bounded in $X$, then there exists a unique extension $\overline{C}\in\Bo(X)$, $C\subset\overline{C}$. By (\ref{semipropconv}), it follows that $\CL=\overline{C}$.   
                      \end{proof}

		In the following let $\gL$ denote the Lambda extension of $g(A)$. We make the convention that for (unbounded) operators $F, G$ the domain of $F+G$ is $D(F)\cap D(G)$.

                  \begin{theorem}\label{mainthm}
              $g\mapsto \gL$ fulfills the properties of an (unbounded) functional calculus, i.e.,
            \begin{enumerate}[i)]
               \item $g\equiv 1\Rightarrow \ g(A)=\ide$,
               \item \label{mainthm2}$(g_{1}+g_{2})_{\Lambda}(A)\supset \gLo+\gLt$, 
               \item \label{mainthm3}$\gLot\supset \gLo\gLt$ and 
               \begin{equation}\label{domincl}
               D\big(\gLo\gLt\big)=D\big(\gLot\big)\cap D\big(\gLt\big).
               \end{equation}
            \end{enumerate}
            If $g_{2}(A)$ is bounded, then equality holds in \ref{mainthm2}) and \ref{mainthm3}).
                    \end{theorem}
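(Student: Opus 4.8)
The plan is to push the three Toeplitz identities $M_{1}=\ide$, $M_{g_{1}+g_{2}}=M_{g_{1}}+M_{g_{2}}$ and $M_{g_{1}g_{2}}=M_{g_{1}}M_{g_{2}}$ (Lemma~\ref{le:propMg}) first through the Weiss construction, so as to obtain the corresponding relations for the operators $g(A)$ on $\DA$ (resp.\ on $\DAt$), and only afterwards to lift everything to the Lambda extensions, exploiting that these are closed and commute with every resolvent. Throughout I set $S_{\lambda}=\lambda\RlA$ and use $S_{\lambda}x\to x$ for all $x\in X$, see (\ref{semipropconv}).

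For \emph{i)}: since $\mathfrak{L}f\in\Ht$ for $f\in\Ltwo$, the symbol $1$ gives $M_{1}=\ide$ on $\Ltwo$, hence $\Ds_{1,y}x=\langle y,T(.)x\rangle_{X',X}$; uniqueness in Lemma~\ref{weiss} (via Lemma~\ref{le:weakweiss}) forces $L_{1,y}x_{1}=\langle y,x_{1}\rangle$, so (\ref{eq:thm:g(A)X}) yields $1(A)T(t)x_{1}=T(t)x_{1}$ for all $t\geq0$, $x_{1}\in\DA$, and at $t=0$ this is $1(A)=\ide|_{\DA}$, whose Lambda extension is $\ide$. For \emph{ii)}: linearity of $B(x,\cdot)$ and of $M_{g}$ together with $M_{g_{1}+g_{2}}=M_{g_{1}}+M_{g_{2}}$ gives $\Ds_{g_{1}+g_{2},y}=\Ds_{g_{1},y}+\Ds_{g_{2},y}$, whence, by uniqueness in Lemma~\ref{weiss}, $(g_{1}+g_{2})(A)=g_{1}(A)+g_{2}(A)$ on $\DA$. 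Writing $\lambda(g_{1}+g_{2})(A)\RlA x=\lambda g_{1}(A)\RlA x+\lambda g_{2}(A)\RlA x$ and letting $\lambda\to\infty$ shows that $x\in D(\gLo)\cap D(\gLt)$ implies $x\in D(\gLot)$ with $\gLot x=\gLo x+\gLt x$, which is the asserted inclusion. If $g_{2}(A)$ is bounded, then $\gLt=\overline{g_{2}(A)}\in\Bo(X)$ and $\lambda g_{2}(A)\RlA x\to\gLt x$ for \emph{every} $x\in X$; subtracting this from the previous display gives $D(\gLot)\subset D(\gLo)$, hence equality.

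The bulk of the work is \emph{iii)}. First I would establish the algebraic identity on $\DAt$: applying Lemma~\ref{le;compositewadm} with $C=g_{1}(A)$ (weakly admissible by Remark~\ref{rem:B=CT}) and using $M_{g_{1}g_{2}}=M_{g_{1}}M_{g_{2}}$ together with $\langle y,g_{1}(A)T(.)x_{2}\rangle=M_{g_{1}}\langle y,T(.)x_{2}\rangle$ for $x_{2}\in\DA$ (this being (\ref{eq:thm:g(A)X}) for $g_{1}$), the chain of equalities in Lemma~\ref{le;compositewadm}, evaluated at $t=0$, gives $g_{1}(A)g_{2}(A)x_{2}=(g_{1}g_{2})(A)x_{2}$ for all $x_{2}\in\DAt$. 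Next I insert two resolvents: for arbitrary $x\in X$ and $\mu\in\rho(A)$ one has $S_{\lambda}S_{\mu}x\in\DAt$, so by the previous identity and (\ref{g(A)commuteA}) (using $S_{\mu}x\in\DA$),
\begin{equation*}
(g_{1}g_{2})(A)S_{\lambda}S_{\mu}x=g_{1}(A)g_{2}(A)S_{\lambda}S_{\mu}x=g_{1}(A)\,S_{\lambda}\,g_{2}(A)S_{\mu}x .
\end{equation*}
Letting $\lambda\to\infty$, the left-hand side converges to $(g_{1}g_{2})(A)S_{\mu}x$ (because $S_{\mu}x\in\DA$, where $\gLot$ agrees with $(g_{1}g_{2})(A)$); hence the right-hand side converges as well, which, by the very definition of the Lambda extension, means $g_{2}(A)S_{\mu}x\in D(\gLo)$ and
\begin{equation*}
\gLo\,g_{2}(A)S_{\mu}x=(g_{1}g_{2})(A)S_{\mu}x,\qquad x\in X,\ \mu\in\rho(A).\tag{$\star$}
\end{equation*}

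It remains to derive (\ref{domincl}) and $\gLot\supset\gLo\gLt$ from $(\star)$ by sending $\mu\to\infty$, and this is where the genuine difficulty sits: $\gLo$ is only closed, not bounded, so one cannot naively pass to the limit inside $\gLo\,g_{2}(A)S_{\mu}x$. The device that unlocks this is the identity $g_{2}(A)S_{\mu}x=S_{\mu}\gLt x$ for $x\in D(\gLt)$, valid because $\gLt$ commutes with $\RmA$ and coincides with $g_{2}(A)$ on $\DA$. Then, if $x\in D(\gLot)\cap D(\gLt)$, we get $g_{2}(A)S_{\mu}x=S_{\mu}\gLt x\to\gLt x$ while $\gLo\,g_{2}(A)S_{\mu}x=\mu(g_{1}g_{2})(A)\RmA x\to\gLot x$, so closedness of $\gLo$ forces $\gLt x\in D(\gLo)$ and $\gLo\gLt x=\gLot x$. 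Conversely, if $x\in D(\gLo\gLt)$ then $\gLt x\in D(\gLo)$, hence $g_{2}(A)S_{\mu}x=S_{\mu}\gLt x\in\DA\subset D(\gLo)$, and $(\star)$ together with the commutation of $\gLo$ with $S_{\mu}$ gives $(g_{1}g_{2})(A)S_{\mu}x=S_{\mu}\gLo\gLt x\to\gLo\gLt x$, so $x\in D(\gLot)$ with $\gLot x=\gLo\gLt x$. These two inclusions are precisely (\ref{domincl}) and $\gLot\supset\gLo\gLt$. Finally, when $g_{2}(A)$ is bounded we have $D(\gLt)=X$, so (\ref{domincl}) collapses to $D(\gLo\gLt)=D(\gLot)$ and, the operators agreeing on this common domain, $\gLot=\gLo\gLt$. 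I expect the main obstacle to be exactly this last step — making the passage $\mu\to\infty$ rigorous in the presence of the unbounded (merely closed) factor $\gLo$ — with a secondary difficulty being the careful domain bookkeeping needed in the $\DAt$-identity so that every composition of unbounded operators is legitimate.
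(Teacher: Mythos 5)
Your proposal is correct, and for parts i), ii), the $\DAt$-identity $(g_{1}g_{2})(A)=g_{1}(A)g_{2}(A)$, the identity $(\star)$, and the inclusion $D(\gLot)\cap D(\gLt)\subseteq D(\gLo\gLt)$ it coincides with the paper's argument (the paper runs Lemma \ref{le;compositewadm} with $C=g_{2}(A)$ rather than $C=g_{1}(A)$, which is immaterial since $M_{g_{1}}$ and $M_{g_{2}}$ commute). Where you genuinely diverge is the step you correctly identify as the crux, namely $\gLo\gLt\subset\gLot$ together with the inclusion ``$\subseteq$'' in (\ref{domincl}). The paper takes $x\in D(\gLo\gLt)$, writes $\gLo\gLt x$ as an iterated limit $\lim_{\lambda}\lim_{\mu}(\lambda\mu)(g_{1}g_{2})(A)\RlA\RmA x$, applies the resolvent identity, and then needs the decay estimate (\ref{weakWeiss}) to show that the term $\frac{\lambda\mu}{\mu-\lambda}(g_{1}g_{2})(A)\RmA x$ vanishes as $\mu\to\infty$ before taking the outer limit in $\lambda$. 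You instead feed $g_{2}(A)S_{\mu}x=S_{\mu}\gLt x$ into $(\star)$ and use that $\gLo$ commutes with $\RmA$ on its domain (established in the Lambda-extension lemma) to get $\mu(g_{1}g_{2})(A)\RmA x=S_{\mu}\gLo\gLt x$ outright, so that a single application of (\ref{semipropconv}) finishes the step; no iterated limit, no resolvent identity, and no appeal to (\ref{weakWeiss}). This is a legitimate and arguably cleaner route: it buys a shorter argument resting only on the algebraic commutation properties of the Lambda extension, at the cost of nothing, whereas the paper's version makes the mechanism more explicit but needs the quantitative bound (\ref{weakWeiss}) as an extra input. All domain bookkeeping in your version checks out ($S_{\lambda}S_{\mu}x\in\DAt$, $S_{\mu}x\in\DA$, $\gLt x\in D(\gLo)$ by the definition of $D(\gLo\gLt)$), so the step you flagged as the main obstacle is in fact fully resolved by your device.
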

                      \begin{proof}
                      Obviously, for $g\equiv1\in\Hinf$, $\Ds_{g,y}f=f$ and thus, $g(A)=\ide$. Since the Toeplitz operator $M_{g}$ is linear in symbol $g$, it follows that
                      \begin{equation*}
                      (g_{1}+g_{2})(A)=g_{1}(A)+g_{2}(A)
                      \end{equation*}
                      defined on $\DA$. For $x\in D(g_{1,\Lambda}(A)+g_{2,\Lambda}(A))=D(g_{1,\Lambda}(A))\cap D(g_{2,\Lambda}(A))$ it follows that
                      \begin{equation}\label{mainthmproof1}
                      \lim_{\lambda\to\infty}\lambda(g_{1}(A)+g_{2}(A))\RlA x 
                      \end{equation}
                      exists. Hence, $x$ lies in the domain of $(g_{1}+g_{2})_{\Lambda}(A)$. If $g_{2}(A)$ is bounded, then $D(\gLt)=X$. Thus, the existence of (\ref{mainthmproof1}) implies that 
                      $x\in D(\gLo)$.\newline
                      In order to show \ref{mainthm3}), we verify $(g_{1}\cdot g_{2})(A)=g_{1}(A)g_{2}(A)$ on $\DAt$ first. 
                      According to Lemma \ref{le;compositewadm}, it suffices to prove $g_{1}^{C}(A)=(g_{1}\cdot g_{2})(A)$ for $C=g_{2}(A)$. Let $y\in X'$ and $x\in\DAt$.
                      Then,                      
                      \begin{align*}
                      \langle y,(g_{1}g_{2})(A)T(t)x\rangle  ={}&\big(\Ds_{(g_{1}g_{2}),y}x\big)(t)\\
                      				        ={}&\big(M_{g_{1}g_{2}}(\langle y,T(.)x\rangle  )\big)(t)\\
                               			        ={}&\big(M_{g_{1}}M_{g_{2}}(\langle y,T(.)x\rangle  )\big)(t)\\
			           		        ={}&\big(M_{g_{1}}(\Ds_{g_{2},y}x)\big)(t)\\
					                 ={}&\big(M_{g_{1}}(\langle y,g_{2}(A)T(.)x\rangle  )\big)(t)\\
					                 ={}&\langle y,g_{1}^{C}(A)T(t)x\rangle  ,
		    \end{align*}
			where we used (\ref{eq:thm:g(A)}) several times as well as  the fact that $M_{g_{1}g_{2}}=M_{g_{1}}M_{g_{2}}$ (see Lemma \ref{le:propMg}). Since $x\in\DAt$, the equality holds point-wise for $t\geq0$.  Thus, 
			\begin{equation}\label{thm:coincidenceonDAt}
			   (g_{1}\cdot g_{2})(A)x_{2}=g_{1}(A)g_{2}(A)x_{2}\qquad \forall x_{2}\in\DAt.
			  \end{equation} 
			  Now, let $x\in D(\gLo\gLt)$. This means that
			  \begin{align*}
			  &\lim_{\mu\to\infty}g_{2}(A)\RmA x=\gLt x\quad\text{ exists as well as }\\
			  &\lim_{\lambda\to\infty}\lambda g_{1}(A)\RlA\gLt x=\gLo\gLt x.
			  \end{align*} 
			  Since $g_{1}(A)\RlA\in\Bo(X)$ and since $\RlA$ commutes with  $g_{2}(A)$ on $\DA$, (\ref{g(A)commuteA}), we obtain that
			  \begin{equation*}
			  \gLo\gLt x=\lim_{\lambda\to\infty}\lim_{\mu\to\infty}(\lambda\mu) g_{1}(A)g_{2}(A)\RlA\RmA x
			  \end{equation*}
			  Clearly,  $\RlA\RmA x\in\DAt$. Thus, by (\ref{thm:coincidenceonDAt}),
			    \begin{equation*}
			  \gLo\gLt x=\lim_{\lambda\to\infty}\lim_{\mu\to\infty}(\lambda\mu) (g_{1}g_{2})(A)\RlA\RmA x
			  \end{equation*}
			  Using the resolvent identity, this can be written as
			  \begin{equation}\label{thm:limLaext}
			   \gLo\gLt x=\lim_{\lambda\to\infty}\lim_{\mu\to\infty}\frac{\lambda\mu}{\mu-\lambda} (g_{1}g_{2})(A)\Big[\RlA x-\RmA x\Big].
			  \end{equation}
			  By (\ref{weakWeiss}), we have that $(g_{1}g_{2})(A)\RmA x\to0$ as $\mu\to\infty$. Therefore,
			  \begin{equation*}
		            \lim_{\mu\to\infty}\frac{\lambda\mu}{\mu-\lambda} (g_{1}g_{2})(A)\RmA x=0.
		            \end{equation*}
		            Furthermore,
		            \begin{equation*}
		            \lim_{\mu\to\infty}\frac{\lambda\mu}{\mu-\lambda} (g_{1}g_{2})(A)\RlA x=\lambda  (g_{1}g_{2})(A)\RlA x.
		            \end{equation*}
		            Together, this yields the limit in (\ref{thm:limLaext}),
		            \begin{equation*}
		             \gLo\gLt x=\lim_{\lambda\to\infty}\lambda  (g_{1}g_{2})(A)\RlA x
		            \end{equation*}
		            which means that $x\in D(\gLot)$ and $\gLot x=\gLo\gLt x$. This also shows the inclusion `$\subseteq$' in (\ref{domincl}) since $x\in D(\gLt)$ by assumption. To show the other inclusion, we observe that for $x\in X$ and $\mu\in\rho(A)$
		            \begin{align*}
		            (g_{1}g_{2})(A)\RmA x={}&\lim_{\lambda\to\infty}\lambda (g_{1}g_{2})(A)\RlA\RmA x\\
		            				   ={}&\lim_{\lambda\to\infty}\lambda g_{1}(A)g_{2}(A)\RlA\RmA x\\
						            ={}&\lim_{\lambda\to\infty}\lambda g_{1}(A)\RlA g_{2}(A)\RmA x,
			 \end{align*}
			 where we used (\ref{thm:coincidenceonDAt}) and that $\RlA\RmA x$, $\RmA x$ lie in $\DAt$ and $\DA$, respectively.
			 This gives that $g_{2}(A)\RmA x\in D(\gLo)$ and
			 \begin{equation*}
		           (g_{1}g_{2})(A)\RmA x=\gLo g_{2}(A)\RmA x.
		           \end{equation*}
			For $x\in D(\gLot)\cap D(\gLt)$ this yields that the limit
			\begin{equation*}
			\lim_{\mu\to\infty} \mu(g_{1}g_{2})(A)\RmA x=\lim_{\mu\to\infty}\gLo \mu g_{2}(A)\RmA x
			\end{equation*}
			exists. Since $\mu g_{2}(A)\RmA x\to \gLt$ for $\mu\to\infty$ and the closedness of $\gLo$ we deduce 
			\begin{equation*}
			\gLt x\in D(\gLo)\quad\text{ and } \quad  \gLo \gLt x=\gLot x.
			\end{equation*}
			This shows that $x\in D\big(\gLo\gLt\big)$. For bounded $g_{2}(A)$, (\ref{domincl}) directly shows the equality.
		            
                      \end{proof}
                    Next, we see that our weak calculus coincides with the `usual' definition of $g(A)$ in case of $g$ being rational.
                   \begin{lemma}\label{grational}
                   If $g$ is the Fourier transform of a function $h\in L^{1}(\R)$ with  support in $(-\infty,0]$, then
		\begin{equation}
		  g(A)x=\int_{0}^{\infty}h(-s)T(s)x \ ds
		 \end{equation}
		 for all $x\in\DA$. Hence, $g(A)$ is bounded and can be extended continuously to an operator in $\Bo(X)$.
	           \end{lemma}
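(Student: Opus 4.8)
The plan is to show that the bounded operator
\[
Cx:=\int_{0}^{\infty}h(-\tau)T(\tau)x\,d\tau
\]
coincides with $g(A)$ on $\DA$, and then to read off the boundedness. Two preliminaries are needed. First, since $T(.)$ is exponentially stable, $M:=\sup_{t\ge0}\|T(t)\|<\infty$, so the Bochner integral defining $Cx$ converges for every $x\in X$ with $\|Cx\|\le M\|h\|_{L^{1}}\|x\|$; hence $C\in\Bo(X)\subset\Bo(X_{1},X)$. Secondly, unwinding the hypothesis (``$g$ is the Fourier transform of $h$'' together with $\operatorname{supp} h\subseteq(-\infty,0]$, which forces the holomorphic extension of $g$ to live in $\C_{-}$) one gets
\[
g(s)=\int_{0}^{\infty}h(-\tau)e^{s\tau}\,d\tau\qquad(\Re s\le0),
\]
so in particular $|g(s)|\le\|h\|_{L^{1}}$, $g\in\Hinf$, and $g(A)$ is well defined.

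The key step is the convolution representation of the Toeplitz operator: for all $\phi\in\Ltwo$,
\begin{equation}\label{MgConv}
(M_{g}\phi)(t)=\int_{0}^{\infty}h(-\tau)\phi(t+\tau)\,d\tau,\qquad t\ge0.
\end{equation}
To prove (\ref{MgConv}) I would observe that its right-hand side is $N\phi:=\int_{0}^{\infty}h(-\tau)\sigma_{\tau}\phi\,d\tau$, and since every left shift $\sigma_{\tau}$ is a contraction on $\Ltwo$, this Bochner integral defines an operator $N\in\Bo(\Ltwo)$ with $\|N\|\le\|h\|_{L^{1}}$. Evaluating $N$ on the exponentials $e_{a}(t):=e^{at}$ with $a<0$ and using the second preliminary gives $(Ne_{a})(t)=e^{at}\int_{0}^{\infty}h(-\tau)e^{a\tau}\,d\tau=g(a)e^{at}=(M_{g}e_{a})(t)$, the last equality by (\ref{toeplitzproperty}). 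Since $\operatorname{span}\{e_{a}:a<0\}$ is dense in $\Ltwo$ -- if $f\perp e_{a}$ for all $a<0$, then $\Lap f$ vanishes on $(0,\infty)$, hence $\Lap f\equiv0$ and $f=0$ -- and both $N$ and $M_{g}$ are bounded, (\ref{MgConv}) follows.

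Granting (\ref{MgConv}), fix $x\in\DA$ and $y\in X'$ and set $\phi(t):=\langle y,T(t)x\rangle$, which lies in $\Ltwo$ by exponential stability. Pulling $y$ through the Bochner integral and using $T(\tau)T(t)=T(t+\tau)$,
\[
\langle y,CT(t)x\rangle=\int_{0}^{\infty}h(-\tau)\langle y,T(t+\tau)x\rangle\,d\tau=(M_{g}\phi)(t)=(\Ds_{g,y}x)(t)
\]
for a.e.\ $t\ge0$, where the last equality is the definition of $\Ds_{g,y}$. Comparing with (\ref{eq:thm:g(A)X}) of Theorem \ref{thm:g(A)}, $\langle y,g(A)T(t)x\rangle=\langle y,CT(t)x\rangle$ for a.e.\ $t$; since for $x\in\DA$ both sides are continuous in $t$, they agree for every $t\ge0$, and putting $t=0$ and varying $y\in X'$ yields $g(A)x=Cx$. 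As $C\in\Bo(X)$, this is precisely the assertion: $g(A)$ is the restriction to $\DA$ of a bounded operator on $X$.

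The only point requiring genuine care is the convention bookkeeping leading to the formula $g(s)=\int_{0}^{\infty}h(-\tau)e^{s\tau}\,d\tau$; once that is pinned down, the remaining ingredients -- boundedness of $N$, density of the exponentials, and the identification via Theorem \ref{thm:g(A)} -- are routine applications of Fubini's theorem.
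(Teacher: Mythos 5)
Your argument is correct, and its overall skeleton matches the paper's: both proofs hinge on the convolution representation $(M_{g}\phi)(t)=\int_{0}^{\infty}h(-\tau)\phi(t+\tau)\,d\tau$ and then push this through $\phi=\langle y,T(.)x\rangle$ via the semigroup law. Where you diverge is in how the two key steps are carried out. For the convolution formula, the paper invokes the Convolution Theorem, Young's inequality, and the decomposition $L^{2}(i\R)=\Ht\oplus\Hto$ to compute $\Pi(g\cdot\Lap f)$ directly, whereas you verify the identity on the exponentials $e_{a}$ using the eigenfunction property (\ref{toeplitzproperty}), and extend by density and boundedness of both operators; your route is more elementary and self-contained (it needs only the injectivity of the Laplace transform), at the cost of having to justify that the Bochner integral $\int_{0}^{\infty}h(-\tau)\sigma_{\tau}\phi\,d\tau$ evaluates pointwise a.e.\ to the claimed convolution. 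For the identification with $g(A)$, the paper works in the Laplace domain, proving $\langle y,g(A)(s\ide-A)^{-1}x\rangle=\langle y,\int_{0}^{\infty}h(-u)T(u)(s\ide-A)^{-1}x\,du\rangle$ via (\ref{eq:weakweiss2}) and Fubini and then using surjectivity of the resolvent onto $\DA$; you instead compare $\langle y,g(A)T(t)x\rangle$ and $\langle y,CT(t)x\rangle$ pointwise through (\ref{eq:thm:g(A)X}) and evaluate at $t=0$ after a continuity argument. Both identifications are legitimate; the paper's has the mild advantage of not needing the a.e.-to-everywhere upgrade, while yours stays entirely in the time domain. No gaps.
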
 
	           \begin{proof}
	          Let $y\in X'$, $x\in X$ and $s>0$. By equation (\ref{eq:weakweiss2}) of Lemma \ref{le:weakweiss} and (\ref{Lgyfx}) we know that
		\begin{equation}\label{eq3:proofg=Fh}
		 \langle y,g(A)(s\ide-A)^{-1}x \rangle_{X',X}=\Lap[\Ds_{g,y}x](s).
		\end{equation}
		We are going to use the following general consequence of the Fourier transform. For $f\in\Ltwo$ it follows by the Convolution Theorem that 
		\begin{align}
				g\cdot\Lap(f)(i.)={}&\mathcal{F}(h)(.)\mathcal{F}(f_{ext})(.)=\mathcal{F}(h\ast f_{ext})(.)\notag \\
		={}&\Lap((h\ast f_{ext})\big|_{(0,\infty)})(i.)+\Lap((h\ast f_{ext})\big|_{(-\infty,0))})(i.),\label{eq:proofg=Fh}
		\end{align}
		where $f_{ext}$ is the extension of $f$ to the real line, by $f_{ext}(t)=0$ for $t<0$. 
		Since $h\ast f_{ext}\in L^{2}(\R)$ by Young's inequality, (\ref{eq:proofg=Fh}) yields
		\begin{equation}\label{eq2:proofg=Fh}
		M_{g}f=\Lap^{-1}\Pi(g\cdot\Lap f)=(h\ast f_{ext})\big|_{(0,\infty)}.
		\end{equation}
		Now, let $f=\langle y,T(.)x\rangle_{X',X}$.  By equation (\ref{eq2:proofg=Fh}),
		\begin{align*}
		\Lap[\Ds_{g,y}x](s)={}&\Lap[M_{g}f](s)\\
					     ={}&\int_{0}^{\infty}e^{-st}(h\ast f_{ext})\big|_{(0,\infty)}(t) \ dt\\
					     ={}&\int_{0}^{\infty}e^{-st}\int_{\R}\langle y,T(t-u)x\rangle h(u) \ du \ dt\\
					     ={}&\int_{0}^{\infty}e^{-st}\int_{0}^{\infty}h(-u)\langle y,T(t+u)x\rangle\  du \ dt\\
					     ={}&\int_{0}^{\infty}\int_{0}^{\infty}\langle y,h(-u)T(u)e^{-st}T(t)x\rangle\ du \ dt\\
					     ={}&\int_{0}^{\infty}\langle y,h(-u)T(u)\int_{0}^{\infty}e^{-st}T(t)x \ dt\rangle\ du\\
					     ={}&\int_{0}^{\infty}\langle y,h(-u)T(u)(s\ide-A)^{-1}x\rangle \ du,
		\end{align*}
		where we used Fubini's Theorem and the fact that $\int_{0}^{\infty}e^{-st}T(t)x \ dt=(s\ide-A)^{-1}x$. 
		Inserting this in (\ref{eq3:proofg=Fh}) gives
		\begin{align*}
		\langle y,g(A)(s\ide-A)^{-1}x \rangle_{X',X}={}&\int_{0}^{\infty}\langle y,h(-u)T(u)(s\ide-A)^{-1}x\rangle_{X',X}\ du\\
								    ={}&\langle y,\int_{0}^{\infty}h(-u)T(u)(s\ide-A)^{-1}x\ du\rangle_{X',X},
		\end{align*}
		since the integral exists strongly. Because $(s\ide-A)^{-1}$ maps $X$ onto $\DA$, this completes the proof.
		\end{proof}
		\begin{remark}[to Lemma \ref{grational}]
		\begin{enumerate}
		\item
		The proof shows that the Lemma is still valid if we assume more generally that $g$ is the Fourier-Laplace transform of a Borel measure on $(-\infty,0]$ with bounded variation, i.e.,
		\begin{equation*}
		g(is)=\int_{\R}e^{-ist} \ \mu(dt).
		\end{equation*}
		Then the operator $g(A)$ reads
		\begin{equation*}
		  g(A)x=\int_{0}^{\infty}T(s)x \ \mu(ds),
		 \end{equation*}
		 which is the well-known \textit{Phillips-calculus}.
		 \item Also, more generally, $g$ can be assumed to be the Fourier transform of 
		 an $h\in L^{p}(\R)$ with support in $(-\infty,0]$ and $1\leq p\leq 2$. Since $T(.)$ is exponentially stable, 
		 $f=\langle y,T(.)x\rangle\in L^{q}(\R)$ for all $q\geq1$. Thus, $h\ast f_{ext}$ is still in $\Ltwo$ by Young's inequality 
		 (for which you choose $q$ such that $\frac{1}{p}+\frac{1}{q}=1+\frac{1}{2}$). The rest of the proof stays the same.
	
		 \end{enumerate}
		\end{remark}

		We collect some basic results of our calculus.
		\begin{theorem}\label{2mainthm}The functional calculus has the following properties:
		\begin{enumerate}[i)]
		\item \label{mainthm2:0}Define $\HB=\left\{g\in\Hinf:\gL\in\Bo(X)\right\}$. Then,
		\begin{equation*}
		\Phi:\HB\rightarrow\Bo(X),\quad g\mapsto\gL
		\end{equation*}
		is an algebra homomorphism.
		\item \label{mainthm2:1}If $P\in\Bo(X)$ commutes with $A$, $PA\subset AP$, i.e.,
		\begin{equation}\label{eq:APPA}
		\DA\subset D(AP) \quad\text{ and }\quad PAx_{1}=APx_{1} \quad \forall x\in\DA,
		\end{equation}
		then $P$ commutes with $g_{\Lambda}(A)$ for any $g\in\Hinf$. 
		In particular, $T(t)$ commutes with $\gL$ for any $t>0$.
		\item \label{mainthm2:2}For $g_{\mu}(z)=\frac{1}{\mu-z}$ we have $g_{\mu,\Lambda}(A)=R(\mu,A)$ for all $\mu$ with $\Re(\mu)>0$.
		\item \label{mainthm2:3}For $g_{t}(s)=e^{ts}$ we have $g_{t,\Lambda}(A)=T(t)$ for all $t\geq0$.
		\end{enumerate}
		\end{theorem}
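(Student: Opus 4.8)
The plan is to treat the four items in turn, leaning heavily on Theorem \ref{mainthm}, Lemma \ref{grational}, and the commutation properties already established. For \ref{mainthm2:0}), the set $\HB$ is closed under addition and multiplication: if $\gLo,\gLt\in\Bo(X)$ then by Theorem \ref{mainthm}.\ref{mainthm2}) and \ref{mainthm3}) (using the "bounded $g_2(A)$" clauses) we get $(g_1+g_2)_\Lambda(A)=\gLo+\gLt$ and $(g_1g_2)_\Lambda(A)=\gLo\gLt$ as genuine equalities of bounded operators on $X$, which is exactly the homomorphism property; together with $1\mapsto\ide$ from Theorem \ref{mainthm} this gives that $\Phi$ is an algebra homomorphism. (I would remark that $\HB$ is indeed a subalgebra of $\Hinf$, so the statement is not vacuous.)

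For \ref{mainthm2:1}), I would first show $P$ commutes with $g(A)$ on $\DA$ and then pass to the Lambda extension. The cleanest route: for $y\in X'$ and $x\in\DA$, the assumption \eqref{eq:APPA} gives $PT(t)x=T(t)Px$ for all $t$ (standard: $P$ commuting with $A$ implies $P$ commutes with the resolvent, hence with $T(t)$), so
\[
\langle y, Pg(A)T(t)x\rangle = \langle P'y, g(A)T(t)x\rangle = \big(\Ds_{g,P'y}x\big)(t) = \big(M_g\langle P'y,T(\cdot)x\rangle\big)(t) = \big(M_g\langle y,PT(\cdot)x\rangle\big)(t),
\]
and $\langle y,PT(t)x\rangle=\langle y,T(t)Px\rangle$, so this equals $\big(\Ds_{g,y}Px\big)(t)=\langle y,g(A)T(t)Px\rangle$; evaluating at $t=0$ gives $Pg(A)x=g(A)Px$ for $x\in\DA$. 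To pass to $\gL$: for $x\in D(\gL)$, since $P$ commutes with $\RlA$, we have $\lambda g(A)\RlA Px = \lambda P g(A)\RlA x \to P\gL x$, which shows $Px\in D(\gL)$ and $\gL Px = P\gL x$. Taking $P=T(t)$ (which commutes with $A$) gives the last claim.

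For \ref{mainthm2:2}), the function $g_\mu(z)=\frac{1}{\mu-z}$ with $\Re(\mu)>0$ is the Laplace transform of $t\mapsto e^{-\mu t}$ restricted to $(0,\infty)$, i.e.\ the Fourier transform of $h(t)=e^{\mu t}\mathbf{1}_{(-\infty,0]}(t)\in L^1(\R)$; Lemma \ref{grational} then gives $g_\mu(A)x=\int_0^\infty e^{-\mu s}T(s)x\,ds=R(\mu,A)x$ for $x\in\DA$, and since $R(\mu,A)\in\Bo(X)$ the Lambda extension equals its bounded extension, so $g_{\mu,\Lambda}(A)=R(\mu,A)$. For \ref{mainthm2:3}), $g_t(s)=e^{ts}$ is the Laplace transform of the shifted Dirac mass $\delta_t$ (a bounded Borel measure on $(-\infty,0]$ after reflection), so by the remark following Lemma \ref{grational} (the Phillips-calculus case) $g_t(A)x=\int_0^\infty T(s)x\,\delta_t(ds)=T(t)x$; again $T(t)\in\Bo(X)$ forces $g_{t,\Lambda}(A)=T(t)$. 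Alternatively, for \ref{mainthm2:3}) one can verify the Toeplitz identity \eqref{toeplitzproperty} directly: $M_{g_t}$ acts as a shift-type operator and one checks $\Ds_{g_t,y}x(\cdot)=\langle y,T(\cdot+t)x\rangle$ on $\Ltwo$, whence $g_t(A)T(\cdot)x = T(\cdot)T(t)x$ and $g_t(A)=T(t)$ on $\DA$.

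The main obstacle is \ref{mainthm2:3}): one must be careful that $g_t\in\Hinf$ (it is, since $\Re(s)<0$ makes $|e^{ts}|=e^{t\Re(s)}\le 1$ for $t\ge0$) and that the identification of $M_{g_t}$ with a right-translation on $\Ltwo$ is handled correctly via the Hardy-space projection — the symbol $e^{ts}$ corresponds on the Laplace side to composition with an inner function, so one should confirm that $M_{g_t}f = \sigma$-type shift really produces $\langle y,T(\cdot+t)x\rangle$ rather than an extra projection term; invoking the measure-version of Lemma \ref{grational} sidesteps this entirely, which is why I would present that as the primary argument. Items \ref{mainthm2:0}), \ref{mainthm2:1}), \ref{mainthm2:2}) are essentially bookkeeping on top of results already in hand.
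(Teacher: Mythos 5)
Your proposal is correct and follows essentially the same route as the paper: items i)--iii) are argued exactly as in the paper's proof (boundedness and the extension/equality clauses of Theorem \ref{mainthm} for the homomorphism property, commutation with $T(t)$ on $\DA$ followed by passage to the Lambda extension through the resolvent, and Lemma \ref{grational} applied to $e^{\mu s}\mathbf{1}_{(-\infty,0)}$ for the resolvent identity). The only deviation is in item iv), where you lead with the Phillips-calculus remark following Lemma \ref{grational} applied to a reflected Dirac mass, whereas the paper directly computes $M_{g_t}f=f(\cdot+t)|_{(0,\infty)}$ via the Fourier shift and the Hardy-space splitting; both arguments are sound, and you sketch the paper's computation as your alternative in any case.
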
 
		\begin{proof}
		\ref{mainthm2:0}) Let $g_{1},g_{2}$ be in $\HB$. By Theorem \ref{mainthm} \ref{mainthm3}), $\gLot$ is an extension of $\gLo\gLt$. Since the latter is a bounded operator defined on $X$, also $\gLot\in\Bo(X)$. Thus, $\gLot\in\HB$. The rest is clear from Theorem \ref{mainthm}.\newline
		\ref{mainthm2:1}) 
		Using the Laplace transform, it is easy to see that (\ref{eq:APPA}) implies that $P$ commutes with $T(t)$ for any $t\geq0$. In fact, 
		(\ref{eq:APPA}) implies
		\begin{equation*}
		P(s\ide-A)x=(s\ide-A)Px\qquad \forall x\in\DA.
		\end{equation*} 
		For $s\in\rho(A)$ this yields
		\begin{equation*}
		(s\ide-A)^{-1}Px=P(s\ide-A)^{-1}x\qquad\forall x\in X.
		\end{equation*}
		This is nothing else than
		\begin{equation}\label{proofeqtmp}
		\int_{0}^{\infty}e^{-st}T(t)Px \ dt=P\int_{0}^{\infty}e^{-st}T(t)x\ dt\qquad\forall x\in X.
		\end{equation}
		Since $P$ is bounded, 
		\begin{equation*}
		P\int_{0}^{\infty}e^{-st}T(t)x=\int_{0}^{\infty}e^{-st}PT(t)x \ dt,
		\end{equation*}
		therefore, by (\ref{proofeqtmp}), we deduce
		\begin{equation*}
		T(t)Px_{0}=PT(t)x_{0} \qquad\forall x_{0}\in X,
		\end{equation*}
		 since the Laplace transform is injective.
		Let $y\in X'$ and $x\in\DA$. Similar as in the proof of Theorem \ref{thm:g(A)}, we deduce
		\begin{align*}
		\langle y,g(A)T(t)Px_{1}\rangle={}&(\Ds_{g,y}Px_{1})(t)\\
							={}&[M_{g}\big(\langle y,T(.)Px_{1}\rangle\big)](t)\\
							={}&[M_{g}\big(\langle y,PT(.)x_{1}\rangle\big)](t)\\
							={}&\langle P'y,g(A)T(t)x_{1}\rangle\\
							={}&\langle y,Pg(A)T(t)x_{1}\rangle.
		\end{align*}
		Hence, $Pg(A)x_{1}=g(A)Px_{1}$ for all $x_{1}\in\DA$. Now, let $x\in D(\gL)$. Since $P\in\Bo(X)$
		\begin{equation*}
		P\gL x=\lim_{\lambda\to\infty}\lambda Pg(A)\RlA x.
		\end{equation*}
		By  the already shown commutativity on $\DA$, the right hand side equals
		\begin{equation*}
		\lim_{\lambda\to\infty}\lambda g(A)P\RlA x.
		\end{equation*}
		Clearly, $P\RlA=\RlA P$, thus
		\begin{equation*}
		P\gL x=\lim_{\lambda\to\infty}\lambda g(A)\RlA Px.
		\end{equation*}
		Since the limit exists, $Px\in D(\gL)$ and $P\gL x=\gL Px$.
		\newline \ref{mainthm2:2}) This is an application of Lemma \ref{grational}. In fact, observe that
		\begin{equation*}
		g_{\mu}(i\omega)=\frac{1}{\mu-i\omega}=\mathcal{F}(e^{\mu s}|_{(-\infty,0)})(\omega).
		\end{equation*}
		Therefore, 
		\begin{equation*}
		g_{\mu}(A)x=\int_{0}^{\infty}e^{-\mu t}T(t)x \ dt=R(\mu,A)x.
		\end{equation*}
		\newline \ref{mainthm2:3}) Clearly, the function $g_{t}$ is in $\Hinf$. Let us recall the following property of the Fourier/Laplace transform. For $f\in\Ltwo$ we define $f_{ext}$ to be the extension of $f$ by $0$ to the whole real axis. Now we have that
		\begin{align*}
		e^{it\omega}\Lap(f)(i\omega)={}&e^{it\omega}\mathcal{F}(f_{ext})(\omega)\\
					={}&\mathcal{F}\big(f_{ext}(.+t)\big)(\omega)\\
				        ={}&\Lap\big(f(.+t)|_{(0,\infty)}\big)(i\omega)+\Lap\big(f(.+t)|_{(-t,0)}\big)(i\omega).
		\end{align*}
		Thus, 
		\begin{equation*}
		M_{g_{t}}f=\Lap^{-1}\Pi (g_{t}\cdot\Lap(f))=f(.+t)|_{(0,\infty)}.
		\end{equation*}
		Set $f=\langle y,T(.)x_{1}\rangle$ for $x_{1}\in\DA$ and $y\in X'$. By definition of $\Ds_{g,y}$ and Theorem \ref{thm:g(A)}, we have
		\begin{equation*}
		\langle y,g_{t}(A)T(u)x\rangle=(\Ds_{g_{t},y}x_{1})(u)=\langle y,T(u+t)x\rangle=\langle y,T(t)T(u)x\rangle
		\end{equation*}
		for all $x\in\DA$ and $y\in X'$. Therefore, $g_{t}(A)=T(t)|_{\DA}$ and the assertion follows. 
	        \end{proof}
	        We conclude this section by proving that the main identity of the construction, (\ref{eq:thm:g(A)X}),
	         can be extended for the Lambda extension.
	        \begin{proposition}
	        For $y\in X'$ and $g\in\Hinf$ we have that
	        \begin{equation}\label{eq:weissforlambdaext}
			\langle y,\gL T(.)x \rangle=[\Ds_{g,y}x](.)\qquad \forall x\in D(\gL).
		\end{equation}
		\end{proposition}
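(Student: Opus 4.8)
The plan is to deduce (\ref{eq:weissforlambdaext}) from the identity (\ref{eq:thm:g(A)X}) of Theorem \ref{thm:g(A)}.\ref{thm:g(A)item4}., which is only available on $\DA$, by approximating $x\in D(\gL)$ by $\lambda\RlA x$ and letting $\lambda\to\infty$. Fix $y\in X'$, $g\in\Hinf$ and $x\in D(\gL)$. For every $\lambda>0$ we have $\lambda\RlA x\in\DA$, so applying (\ref{eq:thm:g(A)X}) to $x_{1}=\lambda\RlA x$ and using linearity of $\Ds_{g,y}$ gives, as an identity in $\Ltwo$,
\begin{equation*}
\Ds_{g,y}(\lambda\RlA x)=\lambda\,\langle y,g(A)T(.)\RlA x\rangle .
\end{equation*}
Using (\ref{g(A)commuteT2}) with $\RlA x\in\DA$, the right-hand side can be rewritten as $\langle y,T(.)\bigl(\lambda g(A)\RlA x\bigr)\rangle$, where $g(A)\RlA\in\Bo(X)$ by (\ref{weakWeiss}).

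I would then pass to the limit $\lambda\to\infty$ in both sides. For the left-hand side, (\ref{semipropconv}) gives $\lambda\RlA x\to x$ in $X$, and since $\Ds_{g,y}\in\Bo(X,\Ltwo)$ by (\ref{ineq:Dgy}), we get $\Ds_{g,y}(\lambda\RlA x)\to\Ds_{g,y}x$ in $\Ltwo$. For the right-hand side, the definition (\ref{LebExt}) of the Lambda extension gives $\lambda g(A)\RlA x\to\gL x$ in $X$, since $x\in D(\gL)$; hence there are $\lambda_{0}>0$ and $C>0$ with $\|\lambda g(A)\RlA x\|\le C$ for all $\lambda\ge\lambda_{0}$, so that
\begin{equation*}
\bigl\|T(t)\bigl(\lambda g(A)\RlA x\bigr)\bigr\|\le CMe^{\omega t}\qquad(\lambda\ge\lambda_{0},\ t\ge0),
\end{equation*}
where $\omega<0$ is the growth bound and $\|T(t)\|\le Me^{\omega t}$; this bound is an $\Ltwo$-function of $t$. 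Pointwise in $t$, $T(t)\bigl(\lambda g(A)\RlA x\bigr)\to T(t)\gL x=\gL T(t)x$, using that $x\in D(\gL)$ implies $T(t)x\in D(\gL)$ with $T(t)\gL=\gL T(t)$, by Theorem \ref{2mainthm}.\ref{mainthm2:1}. Dominated convergence then yields $\langle y,T(.)(\lambda g(A)\RlA x)\rangle\to\langle y,\gL T(.)x\rangle$ in $\Ltwo$. Comparing the two $\Ltwo$-limits of the displayed identity gives (\ref{eq:weissforlambdaext}).

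The main point requiring care is precisely the $\Ltwo$-convergence on the right-hand side: one cannot dominate $\langle y,T(.)(\lambda g(A)\RlA x)\rangle$ through a uniform bound on $\|\lambda g(A)\RlA\|_{\Bo(X)}$, since by (\ref{weakWeiss}) this norm is only $O(\sqrt{\lambda})$. The remedy is to commute $T(t)$ past $\lambda g(A)\RlA$ first and exploit that the net $\{\lambda g(A)\RlA x\}_{\lambda\ge\lambda_{0}}$ is bounded simply because it converges, together with the exponential stability of $T(.)$; then the $t$-dependence is controlled by a fixed $\Ltwo$-function, uniformly in $\lambda\ge\lambda_{0}$. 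Note finally that the left-hand side of (\ref{eq:weissforlambdaext}) is indeed an element of $\Ltwo$ — it equals the continuous function $\langle y,T(.)\gL x\rangle$ — so (\ref{eq:weissforlambdaext}) is an equality in $\Ltwo$.
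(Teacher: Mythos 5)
Your proposal is correct and follows essentially the same route as the paper's proof: approximate $x\in D(\gL)$ by $\lambda\RlA x\in\DA$, apply (\ref{eq:thm:g(A)X}) there, use boundedness of $\Ds_{g,y}$ on the left, the definition (\ref{LebExt}) of the Lambda extension on the right, and the commutation of $T(t)$ with $\gL$ from Theorem \ref{2mainthm}. You merely spell out in more detail the $\Ltwo$-convergence of the right-hand side (via the boundedness of the convergent net and exponential stability), a step the paper leaves implicit.
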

		\begin{proof}
			Let $x\in D(\gL)$. Since (by \cite[Lemma II.3.4]{engelnagel})
			\begin{equation*}
			x_{n}:=R(n,A)x\to x\qquad \text{as }n\to\infty,
			\end{equation*}
			we have that $\Ds_{g,y}x_{n}\to\Ds_{g,y}x$ in $\Ltwo$. 
			Furthermore, $x_{n}\in\DA$ and $g(A) x_{n}\to\gL x$ as $n\to\infty$ by the definition of the Lambda extension. 
			Now, the assertion follows from (\ref{eq:thm:g(A)X}) and since $T(t)$ commutes with $\gL$ by Theorem \ref{2mainthm}.\ref{mainthm2:1}.
		\end{proof}

                     \section{Sufficient conditions for a bounded calculus}\label{sec:boundedcalc}
                     \subsection{Exact Observability by Direction}
                      In order to give a sufficient condition for a bounded functional calculus, we introduce a refined notion of observability.
                      \begin{definition}
                      For an operator $C\in\Bo(X_{1},Y)$, the pair $(C,A)$ is called \textbf{exactly observable by direction} if there exist  $m,K>0$ such that for every $x\in\DA$ there is a $y_{x}\in Y'$ with $\|y_{x}\|_{Y'}=1$ such that
                      \begin{equation}
                      \label{eq:obsbd}
                         K\|x\|\leq\|\langle y_{x},CT(.)x\rangle_{Y',Y}  \|_{\Ltwo}\leq m\|x\|.
                      \end{equation}   
                      \end{definition}
                      \begin{theorem}\label{thmexbydir}
                      Let $C\in\Bo(X_{1},Y)$ be exactly observable by direction. Then, $g\mapsto \gL$ is a bounded $\Hinf$-calculus with
                      \begin{equation}
                      	\|\gL\|\leq\frac{m}{K}\|g\|_{\infty},
		   \end{equation}
		   where $m,K$ are the constants from $(\ref{eq:obsbd})$.
                      \end{theorem}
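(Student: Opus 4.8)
The map $g\mapsto\gL$ already meets all the algebraic demands of an $\Hinf$-calculus by Theorems \ref{mainthm} and \ref{2mainthm}, so the only genuinely new content is the norm estimate together with the fact that it forces $\gL\in\Bo(X)$ for \emph{every} $g\in\Hinf$, i.e.\ $\HB=\Hinf$. My plan is: (1) prove the pointwise bound $\|g(A)x\|_{X}\le\frac{m}{K}\|g\|_{\infty}\|x\|$ on the core $\DAt$; (2) deduce that $\gL$ must then be the bounded closure of $g(A)$; (3) read off the statement.

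For step (1) fix $x\in\DAt$. Since $g(A)\DAt\subset\DA$, the vector $z:=g(A)x$ lies in $\DA$, so exact observability by direction applied to $z$ yields $y^{*}=y_{z}\in Y'$ with $\|y^{*}\|_{Y'}=1$ and
\[
K\|g(A)x\|\le\big\|\langle y^{*},CT(\cdot)g(A)x\rangle_{Y',Y}\big\|_{\Ltwo}.
\]
The key point is the identity
\[
\langle y^{*},CT(\cdot)g(A)x\rangle=M_{g}\big(\langle y^{*},CT(\cdot)x\rangle\big)\qquad\text{in }\Ltwo .
\]
To obtain it I would combine the commutation $T(t)g(A)=g(A)T(t)$ on $\DA$ with Lemma \ref{le;compositewadm}: since $T(t)x\in\DAt$, that lemma gives $Cg(A)T(t)x=g^{C}(A)T(t)x$, and the defining relation (\ref{eq:thm:g(A)X}) for $g^{C}(A)$ (built from $B(x,y)=\langle y,CT(\cdot)x\rangle_{Y',Y}$, cf.\ Remark \ref{rem:B=CT}) rewrites $\langle y^{*},g^{C}(A)T(\cdot)x\rangle$ as $\Ds^{B}_{g,y^{*}}x=M_{g}(\langle y^{*},CT(\cdot)x\rangle)$. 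Alternatively the identity can be proved directly, without weak admissibility of $C$: write $g(A)T(t)x=A^{-1}g(A)T(t)Ax$ using (\ref{g(A)commuteA}) with $\lambda=0$, peel off $CA^{-1}\in\Bo(X,Y)$, and apply (\ref{eq:thm:g(A)X}) to $Ax\in\DA$; note moreover that for $x\in\DAt$ the orbit $t\mapsto CT(t)x=CA^{-1}T(t)Ax$ lies in $\LY$ by exponential stability, so $\langle y^{*},CT(\cdot)x\rangle$ is genuinely in $\Ltwo$ and $M_{g}$ is applicable. Feeding the identity back and using $\|M_{g}\|\le\|g\|_{\infty}$ (Lemma \ref{le:propMg}.\ref{boundofToeplitz}) together with the upper bound in (\ref{eq:obsbd}) for the unit vector $y^{*}$ (valid since $C$ is weakly admissible with constant $m$),
\[
K\|g(A)x\|\le\|g\|_{\infty}\big\|\langle y^{*},CT(\cdot)x\rangle\big\|_{\Ltwo}\le m\,\|g\|_{\infty}\,\|x\|,
\]
so $\|g(A)x\|\le\frac{m}{K}\|g\|_{\infty}\|x\|$ for all $x\in\DAt$.

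For step (2), $\DAt$ is dense in $X$, so $g(A)|_{\DAt}$ extends uniquely to $G\in\Bo(X)$ with $\|G\|\le\frac{m}{K}\|g\|_{\infty}$. On the other hand $\gL$ is a closed operator extending $g(A)$, hence $G$ and $\gL$ agree on the dense subspace $\DAt$; closedness of $\gL$ and continuity of $G$ then force $\gL=G$ on all of $X$. (Equivalently, first extend the bound from $\DAt$ to $\DA$ via $nR(n,A)w\to w$ in the graph norm and continuity of $g(A)\colon X_{1}\to X$, and then quote the last assertion of the Lambda-extension lemma.) Thus $\gL\in\Bo(X)$ with $\|\gL\|\le\frac{m}{K}\|g\|_{\infty}$ for every $g\in\Hinf$; combined with Theorems \ref{mainthm} and \ref{2mainthm} this is exactly the assertion that $g\mapsto\gL$ is a bounded $\Hinf$-calculus with the stated bound.

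The step I expect to be the main obstacle is the identity $\langle y^{*},CT(\cdot)g(A)x\rangle=M_{g}(\langle y^{*},CT(\cdot)x\rangle)$: it requires carefully threading the commutation of $g(A)$ with $T(t)$ and with $A^{-1}$, the transfer through the bounded operator $CA^{-1}$, and the integrability bookkeeping that legitimizes the Toeplitz operator — in essence a specialization of the computation behind Lemma \ref{le;compositewadm}. A secondary delicate point is that the distinguished direction $y^{*}$ is the one attached to $g(A)x$ yet is paired with the argument $x$ in the final line, which is why the upper estimate in (\ref{eq:obsbd}) has to be available in its uniform (weak-admissibility) form rather than only for each vector's own distinguished direction.
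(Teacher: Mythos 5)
Your proposal follows essentially the same route as the paper: the distinguished direction is chosen for $g(A)x$, the commutation $g(A)T(t)=T(t)g(A)$ together with the computation (\ref{eq:lemCg(A)}) from Lemma \ref{le;compositewadm} (which, as you and the paper both note, does not require weak admissibility of $C$) converts $\langle y_{x},CT(\cdot)g(A)x\rangle$ into $M_{g}\langle y_{x},CT(\cdot)x\rangle$, and the Toeplitz bound plus density of $\DAt$ finish the argument. The subtlety you flag at the end --- that the upper estimate in (\ref{eq:obsbd}) is applied to a direction attached to $g(A)x$ rather than to $x$ itself --- is a real point, but the paper's proof uses it just as implicitly, so your treatment is if anything more careful than the original.
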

                      \begin{proof}
                      Let $x\in\DAt$. Then, there exists a $y_{x}\in X'$ with norm 1 such that
                      \begin{align*}
                      K\|g(A)x\|\leq{}& \|\langle y_{x},CT(.)g(A)x\rangle_{Y',Y}  \|_{\Ltwo}\\
                                        ={}& \|\langle y_{x},Cg(A)T(.)x\rangle_{Y',Y}  \|_{\Ltwo},
                      \end{align*}
                      where we used that $g(A)$ commutes with the semigroup. 
                      By the proof of Lemma \ref{le;compositewadm}, eq. (\ref{eq:lemCg(A)}) 
                      (there, we do not need weak admissibility of $C$) we obtain
                      \begin{equation*}
                       \|\langle y_{x},Cg(A)T(.)x\rangle_{Y',Y}  \|_{\Ltwo} = \|M_{g}\langle y_{x},CT(.)x\rangle_{Y',Y}  \|_{\Ltwo}.                
                       \end{equation*}
                        This, we can estimate by the norm of the Toeplitz operator, Lemma \ref{le:propMg}.\ref{boundofToeplitz}., 
                        and by using the assumption. Hence,
                        \begin{equation*}
                        \|M_{g}\langle y_{x},CT(.)x\rangle_{Y',Y}  \|_{\Ltwo}\leq m\|g\|_{\infty}\|x\|,
                        \end{equation*}
                       Altogether, we have for $x\in\DAt$
                       \begin{equation}
                       \|g(A)x\|\leq\frac{m}{K}\|g\|_{\infty}\|x\|,
                       \end{equation}
                       which proves the assertion, since $\DAt$ is dense.
                       \end{proof}

 \subsection{Exact Observability vs. Exact Observability by Direction}\label{sec:obsvobbd} 
 
                       In this section we are going to investigate the relation between our `weak' calculus and the `strong' approach for separable Hilbert spaces done in \cite{ZwartAdmissible}. For this, $X$ will be a separable Hilbert space for the rest of the section. To be consistent with our notation of the duality brackets, the inner product of $X$ is linear in the second component.
                       The strong calculus  is constructed by choosing the output mapping
                       \begin{equation*}
                       \Ds_{g}:X\rightarrow\LX:\qquad x\mapsto M_{g}(T(.)x).
                       \end{equation*}
                       Note that $M_{g}=\Lap^{-1}\Pi_{X}(g\cdot\Lap)$ is now defined via the Laplace transform on $\LX$ and the projection $\Pi_{X}$. Then, $g_{s}(A)\in\Bo(X_{1},X)$ is the admissible operator from Lemma \ref{weiss} such that
                       \begin{equation}\label{outputstrong}
                       \Ds_{g}(.)x=g_{s}(A)T(.)x,
                       \end{equation}
                       for $x\in\DA$. 
                       To provide a sufficient condition that $g_{s}(A)$ is bounded the following notion is used in  \cite{ZwartAdmissible}.
                         \begin{definition}Let $Y$ be a separable Hilbert space.
                         For an operator  $C\in\Bo(X_{1},Y)$, the pair $(C,A)$ is called \textbf{exactly observable} 
                         if there exist $m,K>0$ satisfying
                         \begin{equation}
                         \label{eq:exobs}
                         K\|x\|\leq\|CT(.)x\|_{\LY}\leq m\|x\|
                         \end{equation}
                         for all $x\in\DA$.
                         \end{definition}      
                         \begin{remark} \normalfont  
                         \label{rem:exobsobsby}                      
                          \begin{enumerate}
                          \item
                            Since we have that for $\|y\|_{Y}=1$ and $x\in\DA$ 
                          \begin{equation*}
                             |\langle y,CT(t)x\rangle_{Y}  |\leq\|CT(t)x\|
                           \end{equation*}
                            holds. This gives a correspondence between exact observability (\ref{eq:exobs}), 
                            and exact observability by direction (\ref{eq:obsbd}), as indicated in the following scheme
                            \begin{equation*}
                          \begin{matrix}
                             K\|x\|&\leq&\|\langle y_{x},CT(.)x\rangle\|_{\LY}&\leq &m\|x\|&\text{(Ex.Obs. by dir.)}\\
                             &\Downarrow&&\Uparrow&&\\
                            K\|x\|&\leq&\|CT(.)x\|_{\LY}&\leq &m\|x\|&\text{(Ex.Obs.)}                            
                          \end{matrix}
                          \end{equation*}
                           \item
                          We remark that, provided $\exists m>0 $,
                          \begin{equation*}
                          \forall x\in\DA\exists y_{x},\|y_{x}\|=1,\qquad \langle y_{x},CT(.)x\rangle\|_{\LY}\leq m\|x\|,
                          \end{equation*}
                           $(C,A)$ is not exactly observable by direction iff there exists a sequence $\left\{x_{k}\right\}\subset \DA$ with $\|x_{k}\|=1$, $k\in\N$ such that
                           \begin{equation*}
                           \|\langle y,CT(t)x_{k}\rangle\|_{\Ltwo}<\frac{1}{k}
                           \end{equation*}
                           for all $y\in Y$ with $\|y\|_{Y}=1$. \newline
                           We will use this characterization later.
                           \item Exact observability can be defined for general Banach spaces $X$, $Y$ since the definition does not need the Hilbert space structure.
                           \end{enumerate}
                           \end{remark}
                           The following result is the Hilbert space counterpart of Theorem \ref{thmexbydir} for the strong calculus, see \cite{ZwartAdmissible}. 
	                  \begin{theorem}\label{thm:strongcalcbdd}
                           If there exists an operator $C\in\Bo(X_{1},Y)$ such that $(C,A)$ is exactly observable, then $g_{s}(A)$ is bounded. \newline
                           Hence, the strong calculus, $g\mapsto (g_{s}(A))_{\Lambda}$ 
                           (where $(g_{s}(A))_{\Lambda}$ denotes the Lambda extension of $g(A)$)  is bounded. 
                           \end{theorem}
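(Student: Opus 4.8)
The plan is to run the proof of Theorem \ref{thmexbydir} almost verbatim, with the only difference that exact observability (rather than exact observability by direction) supplies the lower estimate without having to chase a direction $y_x$. So first I would fix $x\in\DAt$. As in the weak calculus, $g_s(A)$ commutes with the semigroup and with the resolvents $R(\mu,A)$ (this holds for the strong calculus too, either by quoting \cite{ZwartAdmissible} or by re-running the argument of Theorem \ref{thm:g(A)} using $\sigma_\tau M_g=M_g\sigma_\tau$), and consequently $g_s(A)$ maps $\DAt$ into $\DA$. Hence $g_s(A)x\in\DA$ and exact observability of $(C,A)$ gives
\begin{equation*}
K\|g_s(A)x\|\leq\|CT(.)g_s(A)x\|_{\LY}=\|Cg_s(A)T(.)x\|_{\LY},
\end{equation*}
where in the last step I used the commutation of $g_s(A)$ with $T(t)$. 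It then remains to bound the right-hand side by $m\|g\|_\infty\|x\|$.

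The heart of the matter is the identity
\begin{equation*}
Cg_s(A)T(t)x=\bigl(M_g(CT(.)x)\bigr)(t),\qquad x\in\DAt,\ \text{a.e. }t\geq0,
\end{equation*}
which is the strong, vector-valued analogue of equation (\ref{eq:lemCg(A)}). To establish it I would use the $A^{-1}$-trick from the proof of Lemma \ref{le;compositewadm}: for $x\in\DAt$ we have $Ax\in\DA$, and since $g_s(A)$ commutes with $T(t)$ and with $A$ on $\DAt$,
\begin{equation*}
Cg_s(A)T(t)x=CA^{-1}g_s(A)T(t)Ax=CA^{-1}\bigl(M_g(T(.)Ax)\bigr)(t),
\end{equation*}
the last equality being the defining relation $g_s(A)T(.)z=M_g(T(.)z)$ for $z=Ax\in\DA$, cf. (\ref{outputstrong}). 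Now $CA^{-1}\in\Bo(X,Y)$ is a genuinely bounded operator, so part iii of Lemma \ref{le:propMg} (commutation of $M_g$ with bounded operators acting pointwise) yields
\begin{equation*}
CA^{-1}M_g(T(.)Ax)=M_g\bigl(CA^{-1}T(.)Ax\bigr)=M_g\bigl(CT(.)x\bigr),
\end{equation*}
where in the final step I used $CA^{-1}T(s)Ax=CT(s)A^{-1}Ax=CT(s)x$. This proves the identity.

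Combining the two steps, for $x\in\DAt$,
\begin{equation*}
K\|g_s(A)x\|\leq\|M_g(CT(.)x)\|_{\LY}\leq\|g\|_\infty\|CT(.)x\|_{\LY}\leq m\|g\|_\infty\|x\|,
\end{equation*}
using $\|M_g\|\leq\|g\|_\infty$ (part \ref{boundofToeplitz} of Lemma \ref{le:propMg}) and the upper admissibility estimate in (\ref{eq:exobs}). Since $\DAt$ is dense, $g_s(A)$ extends to a bounded operator on $X$ with $\|g_s(A)\|\leq\frac mK\|g\|_\infty$. For a bounded $g_s(A)$ the Lambda extension coincides with this bounded extension, so $g\mapsto (g_s(A))_\Lambda$ is bounded on $X$ with the same norm bound, which is the second assertion. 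The step I expect to be the main obstacle is none of the estimates but rather the domain bookkeeping: ensuring that $g_s(A)x\in\DA$, that $g_s(A)$ really does commute with $T(t)$ and $A$ on the relevant domains, and that $C$ may legitimately be pulled inside $M_g$ — which is precisely why one works on $\DAt$ and routes everything through the bounded operator $CA^{-1}$ rather than the unbounded $C$.
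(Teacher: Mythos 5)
Your argument is correct and is essentially the same proof the paper gives for the ``by direction'' analogue, Theorem \ref{thmexbydir}: the paper itself does not reprove Theorem \ref{thm:strongcalcbdd} but defers to \cite{ZwartAdmissible}, where the argument is exactly your chain $K\|g_{s}(A)x\|\leq\|Cg_{s}(A)T(.)x\|_{\LY}=\|M_{g}(CT(.)x)\|_{\LY}\leq m\|g\|_{\infty}\|x\|$ on the dense set $\DAt$, routed through the bounded operator $CA^{-1}$. The only cosmetic point is that you invoke Lemma \ref{le:propMg} (commutation of $M_{g}$ with bounded operators) for $CA^{-1}\in\Bo(X,Y)$ with $X\neq Y$, whereas the lemma is stated for $\Bo(H)$; this extension is immediate and harmless.
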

                           Moreover, the notions of weak and strong calculus coincide (for a separable Hilbert space $X$).
                           To prove this, we make use of the following elementary result.
                           \begin{lemma}
                           \label{le:LapPi}
                           Let $X$ be a separable Hilbert space, 
                          $f\in\LX$, $g\in\Ht(X)$ and $h\in L^{2}(i\R,X)$. Then, for $y\in X$
                           \begin{enumerate}[i.]
                           \item $\langle y,\Lap f\rangle=\Lap\langle y,f\rangle$ and $\langle y,\Lap^{-1}g\rangle=\Lap^{-1}\langle y,g\rangle$, 
                           \item $\langle y,\Pi_{X}h\rangle=\Pi\langle y,h\rangle$.
                          \end{enumerate}
                           \end{lemma}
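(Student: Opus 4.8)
The plan is to prove both parts by unwinding the definitions of the Laplace transform and the projection $\Pi_X$ on the vector-valued Hardy space, and then passing the fixed functional $y\in X$ through the relevant integrals/limits. Throughout I use that $X$ is a separable Hilbert space, so $L^2((0,\infty),X)$ and $L^2(i\R,X)$ are Hilbert spaces with the natural inner products, and that the scalar-valued objects ($\Lap$, $\Pi$, $\Ht$) are the case $X=\C$ of the vector-valued ones. Since $y\in X$ gives a bounded linear functional $z\mapsto\langle y,z\rangle$ on $X$, it commutes with any $X$-valued Bochner integral; this is the only analytic fact needed.

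First I would treat the Laplace transform identity. For $f\in\LX$ and $\Re(s)>0$, $(\Lap f)(s)=\int_0^\infty e^{-st}f(t)\,dt$ as a Bochner integral in $X$, so applying $\langle y,\cdot\rangle$ and pulling it inside the integral gives $\langle y,(\Lap f)(s)\rangle=\int_0^\infty e^{-st}\langle y,f(t)\rangle\,dt=(\Lap\langle y,f\rangle)(s)$; note $\langle y,f\rangle\in\Ltwo$ by Cauchy--Schwarz, so the right-hand side makes sense. The statement for $\Lap^{-1}$ follows because $\Lap:\LX\to\Ht(X)$ is a (multiple of an) isometric isomorphism: writing $g=\Lap f$ with $f=\Lap^{-1}g$ and using the just-proved identity gives $\langle y,\Lap^{-1}g\rangle=\langle y,f\rangle=\Lap^{-1}\langle y,f\rangle=\Lap^{-1}\langle y,g\rangle$, where in the last step I use that $\langle y,\Lap f\rangle=\Lap\langle y,f\rangle$ applied to $f$ and that $\Lap$ is injective on $\Ltwo$. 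Alternatively one can argue directly on boundary functions via the Plancherel/Fourier picture.

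Next, for the projection identity, the cleanest route is to use the orthogonal decomposition $L^2(i\R,X)=\Ht(X)\oplus\Hto(X)$. Write $h=h_+ + h_-$ with $h_+=\Pi_X h\in\Ht(X)$ and $h_-\in\Hto(X)$. Then $\langle y,h\rangle=\langle y,h_+\rangle+\langle y,h_-\rangle$ pointwise a.e. on $i\R$. The claim reduces to showing $\langle y,h_+\rangle\in\Ht$ and $\langle y,h_-\rangle\in\Hto$, for then uniqueness of the orthogonal decomposition in $L^2(i\R,\C)=\Ht\oplus\Hto$ forces $\Pi\langle y,h\rangle=\langle y,h_+\rangle=\langle y,\Pi_X h\rangle$. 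That $\langle y,h_+\rangle\in\Ht$ follows from part i.: $h_+=\Lap f$ for some $f\in\LX$, so $\langle y,h_+\rangle=\Lap\langle y,f\rangle$ with $\langle y,f\rangle\in\Ltwo$, hence $\langle y,h_+\rangle\in\Ht$; symmetrically $\langle y,h_-\rangle\in\Hto$ using the $L^2((-\infty,0),X)$ side of the definition.

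I expect the only mild subtlety — not really an obstacle — to be bookkeeping about which representation of the Hardy-space elements is in play (Laplace transforms of $L^2$ functions on a half-line versus their boundary limit functions on $i\R$), and making sure the functional $y$ genuinely commutes with the Bochner integral defining $\Lap$; both are standard. Everything else is a direct, linear computation, and no estimate beyond Cauchy--Schwarz ($|\langle y,f(t)\rangle|\le\|y\|\,\|f(t)\|$, so $\langle y,f\rangle\in\Ltwo$ whenever $f\in\LX$) is required.
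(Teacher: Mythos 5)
Your argument is correct and follows essentially the same route as the paper: part i.\ by passing the functional $\langle y,\cdot\rangle$ through the (strongly existing) integrals defining $\Lap$ and $\Lap^{-1}$, and part ii.\ by decomposing $h=h_{+}+h_{-}$ along $L^{2}(i\R,X)=\Ht(X)\oplus\Hto(X)$, using part i.\ to place $\langle y,h_{\pm}\rangle$ in $\Ht$ and $\Hto$ respectively, and invoking uniqueness of the scalar orthogonal decomposition. Your write-up is in fact slightly more detailed than the paper's (which compresses part i.\ to one sentence), but there is no substantive difference.
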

                           \begin{proof}
                           The first assertion holds because $\Lap f$ and $\Lap^{-1}g$ exist strongly and by the continuity of the inner product. To see the second assertion, we use that $L^{2}(i\R,X)=\Ht(X)\oplus \Hto(X)$. Hence, we can find $h_{1}\in\Ht(X)$ and $h_{2}\in\Hto(X)$ such that $h=h_{1}+h_{2}$. 
                           From the first part of this lemma we have that $\langle y,h_{1}\rangle\in\Ht$ and   $\langle y,h_{2}\rangle\in\Hto$ which yields 
                           \begin{equation*}
                           \langle y, \Pi_{X}h\rangle=\langle y,h_{1}\rangle=\Pi\langle y,h_{1}\rangle=\Pi\langle y,h\rangle.
                           \end{equation*}
                           \end{proof}
                           \begin{theorem}
                           \label{coincidencestrongweak}
                           Let $X$ be a separable Hilbert space. Then $g(A)=g_{s}(A)$ and therefore, $\gL=(g_{s}(A))_{\Lambda}$.
                           \end{theorem}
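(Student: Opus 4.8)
The plan is to show that the two output mappings agree, and then invoke the uniqueness parts of the construction. Concretely, recall that the weak calculus is built from the bilinear map $B(x,y)=\langle y,T(.)x\rangle_{X',X}$ (identifying $X'$ with $X$ in the Hilbert space case), so that $\Ds_{g,y}x=M_{g}\langle y,T(.)x\rangle$ with the \emph{scalar} Toeplitz operator $M_{g}$ on $\Ltwo$; whereas the strong calculus uses $\Ds_g x=M_g(T(.)x)$ with the \emph{vector-valued} Toeplitz operator $M_g$ on $\LX$. First I would establish the compatibility identity
\begin{equation*}
\langle y,\Ds_g x\rangle_{X} = \Ds_{g,y}x\qquad\text{for all }x\in X,\ y\in X,
\end{equation*}
which is exactly where Lemma \ref{le:LapPi} enters: since $M_g=\Lap^{-1}\Pi_X(g\cdot\Lap)$ on $\LX$, pairing with $y$ and using both parts of Lemma \ref{le:LapPi} (the inner product commutes with $\Lap$, $\Lap^{-1}$ and with the projection, and it passes through the multiplication by the scalar symbol $g$) turns the vector-valued $M_g$ into the scalar $M_g$ applied to $\langle y,T(.)x\rangle$, which is precisely $\Ds_{g,y}x$.

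Next I would feed this into the defining relations. By \eqref{outputstrong} we have $\Ds_g(.)x = g_s(A)T(.)x$ for $x\in\DA$, hence for every $y\in X$ and $x\in\DA$,
\begin{equation*}
\Ds_{g,y}x = \langle y,\Ds_g x\rangle = \langle y, g_s(A)T(.)x\rangle.
\end{equation*}
On the other hand, Theorem \ref{thm:g(A)}.\ref{thm:g(A)item4}. characterizes $g(A)$ (i.e. $g^{B}(A)$) by $\Ds_{g,y}x = \langle y, g(A)T(.)x\rangle_{X',X}$ for all $y\in X'$, $x\in\DA$, and asserts that the operator in $\Bo(X_1,X)$ with this property is \emph{unique}. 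Since $g_s(A)\in\Bo(X_1,X)$ satisfies the same identity, uniqueness forces $g(A)=g_s(A)$ on $\DA$. The statement about the Lambda extensions is then immediate: $\gL$ and $(g_s(A))_\Lambda$ are defined by the very same limit formula \eqref{LebExt} applied to the same operator, so they coincide.

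The only genuinely delicate point is the compatibility identity in the first paragraph, and within it the interchange of the (continuous, conjugate-linear or linear) functional $\langle y,\cdot\rangle$ with the operations defining $M_g$; but this is handled cleanly by Lemma \ref{le:LapPi}, noting that multiplication by the scalar function $g$ on $L^2(i\R,X)$ is evidently compatible with scalar pairing, $\langle y,g\cdot h\rangle = g\cdot\langle y,h\rangle$ pointwise on $i\R$, and that $\langle y, T(.)x\rangle\in\Ltwo$ by exponential stability so that all Laplace transforms and projections involved are well defined. One should also observe that $Z=X'$ is identified with $X$ exactly as in Theorem \ref{thm:g(A)}.\ref{thm:g(A)item4}. and Lemma \ref{Lem3}.\ref{lemCx3}., so that $g^{B}(A)$ genuinely takes values in $X$ rather than in $X''$; this is where separability of $X$ (used already for $M_g$ on $\LX$ and in Lemma \ref{le:LapPi}) is needed. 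With the compatibility identity in hand, the rest is a one-line uniqueness argument.
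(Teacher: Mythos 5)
Your proposal is correct and follows essentially the same route as the paper: both hinge on the compatibility identity $\langle y,\Ds_g x\rangle=\Ds_{g,y}x$ obtained from Lemma \ref{le:LapPi}, and then identify the two operators from the defining relation of Theorem \ref{thm:g(A)}. The only cosmetic difference is that you close the argument by invoking the uniqueness assertion of Theorem \ref{thm:g(A)}, whereas the paper evaluates the resulting a.e.\ identity at $t=0$ using continuity; both steps are valid.
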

                           \begin{proof}
                          It suffices to show that 
                           \begin{equation}
                           \label{proof:strongweakcalculus}
                             \langle y, g(A)T(t)x\rangle=\langle y,g_{s}(A)T(t)x\rangle
                           \end{equation}
                           for $t>0,y\in X'$ and $x\in\DA$. By Theorem \ref{thm:g(A)} and its counterpart for the strong calculus (see (\ref{outputstrong})), we have that
                           \begin{align*}
                           \langle y, g(A)T(.)x\rangle={}&\Ds_{g,y}x,\\
                           \langle y,g_{s}(A)T(.)x\rangle={}&\langle y,\Ds_{g} x\rangle.
                           \end{align*}  
                           where $\Ds_{g}x=M_{g}(T(.)x)$ with $M_{g}\in\Bo(\LX)$. By the definition of $M_{g}$ and Lemma \ref{le:LapPi} we see that
                           \begin{align*}
                           \langle y,\Ds_{g}x\rangle={}&\langle y,\Lap^{-1}\Pi_{X}(g\cdot \Lap [T(.)x)]\rangle\\
                           					   ={}&\Lap^{-1}\Pi(g\cdot \Lap[\langle y,T(.)x\rangle])\\
					     		   ={}&M_{g}(\langle y,T(.)x\rangle)\\
					   		   ={}&\Ds_{g,y}x,
	                \end{align*}
	                where this last $M_{g}$ is an element in $\Bo(\Ltwo)$.
	                Hence, the equality in (\ref{proof:strongweakcalculus}) holds for almost every $t>0$.  
	                Since both functions are continuous in $t$, it holds even point-wise and in particular for $t=0$.
                           \end{proof} 
                            
                           \begin{remark}
                           A consequence of Theorem \ref{coincidencestrongweak} is that  the weak calculus of Section 2 is automatically  admissible in the separable Hilbert space case.                    
                            \end{remark}
                           \begin{proposition}For finite dimensional $Y$ and weakly admissible $C\in\Bo(X_{1},Y)$, exact observability and exact observability by direction of $(C,A)$ are equivalent.
                           \end{proposition}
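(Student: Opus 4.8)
The implication ``exact observability $\Rightarrow$ exact observability by direction'' always holds (for any $Y$, this is the $\Downarrow$ arrow in Remark \ref{rem:exobsobsby}): if $K\|x\|\leq\|CT(.)x\|_{\LY}$ then, since $Y$ is finite-dimensional and hence reflexive, one may pick for each $x$ a norming functional $y_x\in Y'$ with $\|y_x\|=1$ at the peak of $t\mapsto CT(t)x$, or more simply invoke that the norm in $\LY$ is the supremum over $\|y\|_{Y'}=1$ of $\|\langle y,CT(.)x\rangle\|_{\Ltwo}$ — here finite dimensionality (compactness of the unit ball of $Y'$) guarantees the supremum is attained by some $y_x$, giving the lower bound; the upper bound is automatic from $|\langle y_x,CT(t)x\rangle|\leq\|CT(t)x\|$. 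The content is therefore the converse: exact observability by direction $\Rightarrow$ exact observability.

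So I would argue by contraposition using the characterization recorded in Remark \ref{rem:exobsobsby}(2). Suppose $(C,A)$ is \emph{not} exactly observable; since weak admissibility of $C$ already provides the upper bound $\|CT(.)x\|_{\LY}\leq m\|x\|$ (equivalently the uniform bound $\|\langle y,CT(.)x\rangle\|_{\Ltwo}\leq m\|x\|$ for $\|y\|=1$), the failure must be of the lower bound: there is a sequence $\{x_k\}\subset\DA$ with $\|x_k\|=1$ and $\|CT(.)x_k\|_{\LY}\to 0$. Now fix a basis and an orthonormal (or Auerbach) basis $\{e_1,\dots,e_d\}$ of $Y$ with dual functionals $\{e_1^*,\dots,e_d^*\}$, $\|e_j\|=\|e_j^*\|=1$. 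Because $Y$ is $d$-dimensional, the norm $\|CT(t)x_k\|$ is comparable (with constants depending only on $d$) to $\max_j|\langle e_j^*,CT(t)x_k\rangle|$, hence $\sum_{j=1}^d\|\langle e_j^*,CT(.)x_k\rangle\|_{\Ltwo}^2$ is comparable to $\|CT(.)x_k\|_{\LY}^2\to 0$. Consequently $\|\langle e_j^*,CT(.)x_k\rangle\|_{\Ltwo}\to 0$ for each $j$, and then for an \emph{arbitrary} $y\in Y'$ with $\|y\|=1$, writing $y=\sum_j c_j e_j^*$ with $|c_j|$ bounded by a constant depending only on $d$, we get $\|\langle y,CT(.)x_k\rangle\|_{\Ltwo}\leq\sum_j|c_j|\,\|\langle e_j^*,CT(.)x_k\rangle\|_{\Ltwo}\to 0$ uniformly in $\|y\|=1$. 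This is exactly the obstruction in Remark \ref{rem:exobsobsby}(2), so $(C,A)$ is not exactly observable by direction. Combined with the trivial direction, the two notions coincide.

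\textbf{Main obstacle.} The only place requiring care is the passage from ``$\|\langle e_j^*,CT(.)x_k\rangle\|_{\Ltwo}\to 0$ for finitely many fixed functionals'' to ``$\sup_{\|y\|=1}\|\langle y,CT(.)x_k\rangle\|_{\Ltwo}\to 0$'': this is precisely where finite dimensionality of $Y$ is essential, via the uniform equivalence of the dual norm on $Y'$ with the $\ell^\infty$-norm of coefficients against a fixed finite basis (equivalently, compactness of the unit sphere of $Y'$). In infinite dimensions the implication genuinely fails, which is the whole point of introducing exact observability by direction as a strictly weaker notion; one should state explicitly that the comparability constants depend only on $\dim Y$ so that the limit is uniform in $k$. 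Everything else — the upper bounds, the reduction to the lower-bound failure — is immediate from weak admissibility and the definitions.

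\begin{proof}
Exact observability trivially implies exact observability by direction: the upper bound $\|\langle y,CT(.)x\rangle\|_{\Ltwo}\leq m\|x\|$ holds for every $\|y\|_{Y'}=1$ since $|\langle y,CT(t)x\rangle|\leq\|CT(t)x\|$, and for the lower bound note that $\|CT(.)x\|_{\LY}=\sup_{\|y\|_{Y'}=1}\|\langle y,CT(.)x\rangle\|_{\Ltwo}$; as the unit sphere of the finite-dimensional space $Y'$ is compact and $y\mapsto\|\langle y,CT(.)x\rangle\|_{\Ltwo}$ is continuous, the supremum is attained at some $y_x$, and $K\|x\|\leq\|CT(.)x\|_{\Ltwo}=\|\langle y_x,CT(.)x\rangle\|_{\Ltwo}$.

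Conversely, suppose $(C,A)$ is not exactly observable. Since $C$ is weakly admissible, the upper estimate in $(\ref{eq:exobs})$ holds with some $m>0$; thus the failure concerns the lower bound, so there is $\{x_k\}\subset\DA$ with $\|x_k\|=1$ and $\|CT(.)x_k\|_{\LY}\to 0$. Fix an Auerbach basis $\{e_1,\dots,e_d\}$ of $Y$ ($d=\dim Y$) with dual functionals $\{e_1^*,\dots,e_d^*\}$, all of norm $1$. There is a constant $c_d>0$, depending only on $d$, such that $\|v\|_Y\geq c_d\max_j|\langle e_j^*,v\rangle|$ for all $v\in Y$ and $\|y\|_{Y'}\leq\sum_j|\langle y,e_j\rangle|\leq d$ for all $\|y\|_{Y'}=1$ (writing $y=\sum_j\langle y,e_j\rangle e_j^*$). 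Hence for each $j$,
\begin{equation*}
c_d\,\|\langle e_j^*,CT(.)x_k\rangle\|_{\Ltwo}\leq\|CT(.)x_k\|_{\LY}\to 0.
\end{equation*}
Therefore, for any $y\in Y'$ with $\|y\|_{Y'}=1$, writing $y=\sum_j c_j e_j^*$ with $|c_j|\leq 1$,
\begin{equation*}
\|\langle y,CT(.)x_k\rangle\|_{\Ltwo}\leq\sum_{j=1}^d|c_j|\,\|\langle e_j^*,CT(.)x_k\rangle\|_{\Ltwo}\leq\frac{d}{c_d}\|CT(.)x_k\|_{\LY},
\end{equation*}
which tends to $0$ uniformly over $\|y\|_{Y'}=1$. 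By the characterization in Remark \ref{rem:exobsobsby}, $(C,A)$ is not exactly observable by direction. This proves the equivalence.
\end{proof}
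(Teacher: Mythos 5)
Your ``Conversely'' direction is essentially correct, though heavier than necessary: for any unit $y\in Y'$ one has $|\langle y,CT(t)x_k\rangle|\leq\|CT(t)x_k\|_{Y}$ pointwise, hence $\|\langle y,CT(.)x_k\rangle\|_{\Ltwo}\leq\|CT(.)x_k\|_{\LY}$ directly, with no basis expansion needed; finite dimensionality enters that direction only through the (correctly invoked, though unproved) upgrade of weak admissibility to admissibility, which supplies the upper bound in (\ref{eq:exobs}).

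The genuine gap is in your first direction, exact observability $\Rightarrow$ exact observability by direction --- which, contrary to your parenthetical ``always holds (for any $Y$)'', is precisely the implication that fails in infinite dimensions (Example \ref{examplefinal}) and is the substance of the proposition. Your lower bound there rests on the identity $\|CT(.)x\|_{\LY}=\sup_{\|y\|_{Y'}=1}\|\langle y,CT(.)x\rangle\|_{\Ltwo}$, and this identity is false: only ``$\geq$'' holds, because the functional norming $\|CT(t)x\|_{Y}$ varies with $t$. Already for $Y=\C^2$ with the Euclidean norm and $f(t)=e^{-t}\bigl(\chi_{[0,1]}(t)e_1+\chi_{(1,\infty)}(t)e_2\bigr)$ one computes $\sup_{\|y\|=1}\|\langle y,f\rangle\|_{\Ltwo}^2=\tfrac{1}{2}(1-e^{-2})<\tfrac12=\|f\|_{L^2((0,\infty),\C^2)}^2$. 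What is true, and what the proof needs, is the dimensional comparison obtained by expanding $CT(t)x=\sum_{k=1}^{N}\langle \phi'_k,CT(t)x\rangle\phi_k$ in an Auerbach basis, applying Cauchy--Schwarz and integrating: $\|CT(.)x\|_{\LY}^2\leq N\sum_{k=1}^{N}\|\langle \phi'_k,CT(.)x\rangle\|_{\Ltwo}^2\leq N^2\sup_{\|y\|_{Y'}=1}\|\langle y,CT(.)x\rangle\|_{\Ltwo}^2$, whence some $\phi'_{k_0}$ serves as $y_x$ with constant $K/N$ in place of $K$. This is exactly the estimate the paper uses (stated in contrapositive form), and it is the estimate you in fact wrote down --- but you deployed it in the converse direction, where it is redundant, rather than here, where it is indispensable. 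With that substitution your argument closes.
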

                           \begin{proof}
                           Since for finite dimensional $Y$ the notions of admissibility and weak admissibility coincide, in the view of Remark \ref{rem:exobsobsby} it remains to show that 
                           (\ref{eq:exobs}) implies (\ref{eq:obsbd}). Assume that $(C,A)$ is not exactly observable by direction. 
                           Hence, there exists a sequence $x_{n}$ in $\DA$ with $\|x_{n}\|=1$ such that
                           \begin{equation}\label{prop:finiteNEG}
                           \|\langle y,CT(.)x_{n}\rangle_{Y}  \|_{\Ltwo}<\frac{1}{n} \qquad \forall y\in Y', \|y\|_{Y}=1,
                           \end{equation}
                           for all $n\in\N$. 
                           Let $\left\{\phi_{k}:k=1,..,N\right\}$ and $\left\{\phi'_{k}:k=1,..,N\right\}$ be 
                           bases of $Y$ and $Y'$ respectively, such that $\|\phi_{k}\|=1$ and $\langle \phi'_{k},\phi_{j}\rangle=\delta_{kj}$ for $k,j=1,É,N$. 
                           Then, for $t\geq0$
                           \begin{align*}
                           CT(t)x_{n}={}&\sum_{k=1}^{N}\langle \phi'_{k},CT(t)x_{n}\rangle\phi_{k}\\
                           \Rightarrow  \|CT(t)x_{n}\|_{Y}^{2}\leq{}&N\sum_{k=1}^{N}|\langle \phi'_{k},CT(t)x_{n}\rangle|^{2}.
                           \end{align*}
                           Integrating and using (\ref{prop:finiteNEG}) this yields
                               \begin{equation*}
                            \|CT(.)x_{n}\|_{\LY}\to0
                            \end{equation*}
                            for $n\to\infty$. This contradicts the exact observability of $(C,A)$.
                            \end{proof}
                           Finally, we give an example that, even given admissibility, in general exact observability does not imply observability by direction,
                           \begin{example}\normalfont\label{examplefinal}
                           We consider a Hilbert space $X$ with orthonormal basis $\left\{\phi_{n}\right\}_{n\in\N}$ and a set $\left\{\lambda_{n},n\in\N\right\}\subset\R_{-}$. Define an exponentially stable semigroup $T$ by
                           \begin{equation*}
                           T(t)\sum_{n=1}^{\infty}x_{n}\phi_{n}=\sum_{n=1}^{\infty}e^{\lambda_{n}t}x_{n}\phi_{n},\qquad t\geq0.
                           \end{equation*}
                           It can be shown that the generator of $T$ is given by
                           \begin{equation*}
                                        Ax=\sum_{n=1}^{\infty}\lambda_{n}x_{n}\phi_{n},
                           \end{equation*}                  
                           with $\DA=\left\{x\in X:\sum_{n=1}^{\infty} |\lambda_{n}x_{n}|^{2}<\infty\right\}$. As the observer $C$, we take the square root of $(-A)$, which is given by
                           \begin{equation*}
                           C\sum_{n=1}^{N}x_{n}\phi_{n}=\sum_{n=1}^{N}\sqrt{-\lambda_{n}}x_{n}\phi_{n},
                           \end{equation*}
                           and  domain $\DA=\left\{x\in X:\sum_{n=1}^{\infty} |\sqrt{\lambda_{n}}x_{n}|^{2}<\infty\right\}$ . \newline Define $f_{n}(.)=\sqrt{-2\lambda_{n}}e^{\lambda_{n}.}$.                           From \cite{nikolski2} Theorem D.4.2.2. (and the appropriate version for the left half-plane) we know that $f_n$ is a Riesz sequence in $\Ltwo$ if and only if there exists a $\rho>0$ such that
                           \begin{equation} \label{carlesoncond}
                           \prod_{m=1,m\neq n}^{\infty}\left|\frac{\lambda_{n}-\lambda_{m}}{\lambda_{n}+\bar{\lambda}_{m}}\right|\geq\rho \qquad \forall n\in\N.
                           \end{equation}
                           Consider now a sequence $\lambda_{n}$ which satisfies (\ref{carlesoncond}). A possible choice is $\lambda_{n}=-2^n$ (see \cite{garnett} page 278). Since $f_n$ is a Riesz sequence, there exist constants $m,M>0$ such that
                           \begin{equation}\label{rieszbasis}
                           m\sum_{n=1}^{N}|c_{n}|^{2}\leq\left\|\sum_{n=1}^{N}c_{n}f_{n}\right\|_{\Ltwo}^{2}\leq M\sum_{n=1}^{N}|c_{n}|^{2},
                           \end{equation}
                           for all finite sequences of complex numbers $(c_{1},...,c_{N})$. Let us apply these results to our situation. Define 
                           \begin{equation*}
                           x_{N}=\sum_{n=1}^{N}\frac{1}{\sqrt{N}}\phi_{n}.
                           \end{equation*}
 		       Then, $\|x_{N}\|=1$ and for all $y\in X$ with $\|y\|^{2}=\sum_{n=1}^{\infty}|y_{n}|^{2}=1$ there holds
		       \begin{align*}
		        \|\langle y,CT(.)x_{N}\rangle  \|_{\Ltwo}^{2}={}&\|\sum_{n=1}^{N}\sqrt{-\lambda_{n}}e^{\lambda_{n}.}x_{N,n}y_{n}\|_{\Ltwo}^{2}\\
		        ={}&\frac{1}{2}\|\sum_{n=1}^{N}x_{N,n}y_{n}f_{n}\|_{\Ltwo}^{2}\\
		        \leq{}&\frac{M}{2}\sum_{n=1}^{N}\frac{1}{N}|y_{n}|^{2}\\
		        \leq{}&\frac{M}{2N},
		        \end{align*}
		        where we used (\ref{rieszbasis}). Hence, $(C,A)$ is not exactly observable by direction (see Remark \ref{rem:exobsobsby}).\newline
		        However, by
		        \begin{align*}
		         \|CT(t)x\|_{\LX}^{2}={}&\frac{1}{2}\int_{0}^{\infty}\|\sum_{n=1}^{N}x_{n}f_{n}(t)\phi_{n}\|^{2}dt\\
		         				     ={}&\frac{1}{2}\int_{0}^{\infty}\sum_{n=1}^{N}|x_{n}|^{2}|f_{n}(t)|^{2}dt=\frac{1}{2}\|x\|^{2},
		         \end{align*}
		         we see that $(C,A)$ is exactly observable and, therefore, by Theorem \ref{thm:strongcalcbdd}, we obtain a bounded functional calculus.
                           \end{example}
                           \begin{remark}
                           Let us consider the situation of Example \ref{examplefinal}, but with $\lambda_{n}=\lambda_{0}<0$ for all $n$. Then, for $x\in\DA$
                           \begin{equation*}
                            \|\langle y,CT(.)x\rangle  \|_{\Ltwo}^{2}=\int_{0}^{\infty}|\langle y,\sqrt{-\lambda_{0}}e^{\lambda_{0}t}x\rangle|^{2} \ dt=\frac{1}{2}|\langle y,x\rangle|^{2}.
                           \end{equation*}
                           If we choose $y_{x}=\frac{x}{\|x\|}$, we get
                           \begin{equation*}
                            \|\langle y_{x},CT(.)x\rangle  \|_{\Ltwo}^{2}=\frac{1}{2}\|x\|^{2},
                            \end{equation*}
                            hence, $(C,A)$ is exact observable by direction. \newline
                            We remark that, independent of the choice of  the sequence $\left\{\lambda_{n}\right\}$, the solution to the Lyapunov equation
                            \begin{equation*}
                            AP+PA'=-C'C
                            \end{equation*}
                            is $P=\sqrt{2}\ide$.
                           
                           \end{remark}
                           \section{Further results}\label{sec:furtherresults}
                           \subsection{Relation to the natural $\mathcal{H}^{\infty}$-calculus}\label{subsection:Relationnaturalcalc}
                           After developing our calculus and giving sufficient conditions when it is bounded, we want to make some remarks about its consequences. 
                           As stated in the beginning, the uniqueness of a functional calculus is not clear at all, 
                           see \cite[Sections 2.8, 5.3 and 5.7.]{haasesectorial} 
                           In our case, the `straight-forward' question is to understand the relation 
                           with the natural (sectorial/halfplane) calculus, see Section \ref{naturalcalc}. \newline                
                           Since $A$ generates an exponentially stable semigroup, 
                           $A$ is a (strong) half-plane operator of type $\omega<0$, 
                           which means that
                           \begin{equation*}
                            \sigma(A)\subset\overline{L_{\omega}},
                            \qquad \sup_{\lambda\in R_{\omega'}}\|(\omega'-\Re(\lambda))\RlA\|<\infty, \quad \forall \omega'>\omega,
                            \end{equation*}
                            where $\overline{L_{\omega}}=\left\{z\in\C:\Re(z)\leq\omega\right\}$ is a left- and 
                            $R_{\omega'}=\C\setminus\overline{L_{\omega'}}$ is a right-half-plane. 
                            In \cite{battyhaasemubeen,haasehalfplaneoperators,mubeenPhD} it is shown 
                            that for such operators a \textit{natural} calculus can be defined in a similar manner 
                            as for sectorial operators. 
                            In the spirit of the a general \textit{meromorphic calculus} defined in \cite{haasesectorial}, 
                            one constructs a \textit{primary calculus}, 
                            which consists of a homomorphism mapping a subalgebra of functions to $\Bo(X)$, first.
                            Then, this is extended algebraically to $\Hinf$ by using \textit{regularizers}, see \cite[Chapter 1]{haasesectorial}. 
                            \begin{proposition}[Prop. 2.3. in \cite{battyhaasemubeen}, Prop. 3.2.6. in \cite{mubeenPhD}]
                            For 
                            \begin{equation*}
                            \Hinfone:=\left\{f\in\Hinf:|f(z)|=\mathcal{O}(|z|^{-\alpha})\text{ as }|z|\to\infty,\text{ with }\alpha>1\right\},
                            \end{equation*}
                            the mapping $\Hinfone\rightarrow\Bo(X), f\mapsto f_{HP}(A)$, where
                            \begin{equation*}
                            f_{HP}(A):=\int_{i\R-\epsilon}f(z)R(z,A)\ dz, \qquad \omega<-\epsilon<0.
                            \end{equation*}
                            is a homomorphism (the integration goes from $-i\infty-\epsilon$ to $i\infty-\epsilon$ 
                            and does not depend on $\epsilon$).
                            \end{proposition}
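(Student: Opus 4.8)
The plan is to verify, in order, that (a) the defining integral converges absolutely, (b) its value does not depend on $\epsilon$, and (c) the map $f\mapsto f_{HP}(A)$ is multiplicative (linearity being obvious, and there is no unit to worry about since $1\notin\Hinfone$). For (a), recall that $A$ is a strong half-plane operator of type $\omega$: choosing $\omega<\omega'<-\epsilon<0$, the bound $\sup_{\lambda\in R_{\omega'}}\|(\omega'-\Re\lambda)R(\lambda,A)\|<\infty$ yields a uniform estimate $M_{\epsilon}:=\sup_{\Re z=-\epsilon}\|R(z,A)\|<\infty$ on the line of integration, and, taking $\omega'\in(\omega,-\epsilon_{1})$, a uniform bound on any closed strip $\{-\epsilon_{1}\le\Re z\le-\epsilon_{2}\}$ with $\omega<-\epsilon_{1}$. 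Since $z\mapsto f(z)R(z,A)$ is continuous and $|f(z)|=\mathcal{O}(|z|^{-\alpha})$ with $\alpha>1$, one gets $\int_{\Re z=-\epsilon}\|f(z)R(z,A)\|\,|dz|\le M_{\epsilon}\int_{\R}|f(-\epsilon+it)|\,dt<\infty$, so $f_{HP}(A)$ is a well-defined element of $\Bo(X)$; this is the one place where $\alpha>1$ (rather than merely $\alpha>0$) is genuinely used. For (b), fix $\omega<-\epsilon_{1}<-\epsilon_{2}<0$: on the open strip between the two lines, which lies in $\C_{-}\cap\rho(A)$, the map $z\mapsto f(z)R(z,A)$ is $\Bo(X)$-valued holomorphic, so Cauchy's theorem makes its integral over the boundary of the rectangle $[-\epsilon_{1},-\epsilon_{2}]\times[-R,R]$ vanish; letting $R\to\infty$, the two horizontal sides drop out because there $\|f(z)R(z,A)\|=\mathcal{O}(|z|^{-\alpha})$ while their length stays equal to $\epsilon_{1}-\epsilon_{2}$, and the two vertical line integrals coincide.

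For (c), let $f,g\in\Hinfone$ and, using (b), evaluate $f_{HP}(A)$ along $\Re z=-\epsilon_{1}$ and $g_{HP}(A)$ along $\Re w=-\epsilon_{2}$ with $\omega<-\epsilon_{1}<-\epsilon_{2}<0$; since the two contours are disjoint, $|w-z|\ge|\epsilon_{1}-\epsilon_{2}|>0$ throughout. Multiplying, applying Fubini (licensed by the absolute convergence established in (a)), and inserting the resolvent identity $R(z,A)R(w,A)=(w-z)^{-1}\bigl(R(z,A)-R(w,A)\bigr)$ breaks $f_{HP}(A)g_{HP}(A)$ into two absolutely convergent double integrals. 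In the first, for fixed $z$ one evaluates the inner integral $\int_{\Re w=-\epsilon_{2}}\frac{g(w)}{w-z}\,dw$ by closing the $w$-contour with a large circular arc to the left, which stays inside $\C_{-}$ where $g$ is holomorphic; the arc contributes nothing because the integrand is $\mathcal{O}(|w|^{-\alpha-1})$ there, and the residue theorem at the enclosed simple pole $w=z$ gives $2\pi i\,g(z)$. In the second double integral, for fixed $w$ the inner integral $\int_{\Re z=-\epsilon_{1}}\frac{f(z)}{w-z}\,dz$ is closed to the left in the same way, but now the pole $z=w$ lies to the right of the $z$-line, so nothing is enclosed and this integral is $0$. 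Hence only the first term survives, equal to $\int_{\Re z=-\epsilon_{1}}f(z)g(z)R(z,A)\,dz=(fg)_{HP}(A)$ once the customary $\tfrac{1}{2\pi i}$ normalisation of $f_{HP}(A)$ is accounted for; here $fg\in\Hinfone$ because it is again bounded holomorphic on $\C_{-}$ with $|(fg)(z)|=\mathcal{O}(|z|^{-2\alpha})$.

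The step I expect to demand the most care is the contour manipulation in (c): one has to check that the arcs used to close the $w$- and $z$-contours remain inside $\C_{-}$ (so that $f$ and $g$ are actually defined and holomorphic on them), that these arcs vanish as their radius tends to infinity, and — the easiest thing to get wrong — that the orientations are tracked so that the residue at $w=z$ is picked up with the correct sign and $2\pi i$ factor, compatibly with the normalisation built into $f_{HP}(A)$. Everything else reduces to a routine combination of Cauchy's theorem, Fubini's theorem, and the resolvent identity, and the whole argument stays inside the region $\C_{-}\cap\rho(A)$ where $f$, $g$ and $R(\cdot,A)$ are all well behaved.
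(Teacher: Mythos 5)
The paper does not prove this proposition; it is quoted verbatim from \cite{battyhaasemubeen} and \cite{mubeenPhD}, so there is no internal proof to compare against. Your argument is the standard one used in those sources and it is correct: the resolvent bound for a half-plane operator plus the decay $\alpha>1$ gives absolute convergence, Cauchy's theorem on a rectangle gives independence of $\epsilon$, and multiplicativity follows from the resolvent identity, Fubini, and closing the two inner contours to the left (picking up the pole $w=z$ in one term and nothing in the other). You are also right to flag the normalisation: as printed, the definition of $f_{HP}(A)$ in the paper omits the factor $\tfrac{1}{2\pi i}$ that appears in the cited references, and without it the map is multiplicative only up to a factor of $2\pi i$; your proof makes clear that the constant is forced by the residue computation, so the displayed formula should be read with the $\tfrac{1}{2\pi i}$ reinstated.
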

                            For $f\in\Hinf$, $\frac{f(z)}{(1-z)^{2}}\in\Hinfone$. 
                            Since the function $e(z)=(1-z)^{-2}\in\Hinfone$ yields $e_{HP}(A)=R(1,A)$, which is injective, one defines 
                            \begin{equation*}
                            f_{HP}(A)=(\ide-A)(e\cdot f)_{HP}(A)
                            \end{equation*}
                            (this is independent of the regularizer $e$) and, 
                            furthermore we have a similar result as for the sectorial calculus (see \cite[Lemma 3.3.1]{haasesectorial}).
                            \begin{lemma}\label{lem:halfplanelaplace}
                            For $g\in\Hinfone$ and  $\epsilon'>0$ 
                            there exists a unique $h\in L^{1}(\R_{-})$ such that $g(z)=\Lap(h)(z)$ for all $\Re(z)\leq-\epsilon'$. 
                            Moreover,
                            \begin{equation*}
                            	h(t)=\frac{1}{2\pi i}\int_{i\R-\epsilon} g(z)e^{zt} \ dz, \qquad -\epsilon'<-\epsilon<0.
			   \end{equation*}
			   By properties of $\Lap$, $g(z-\epsilon')=\Lap(h(.)e^{.\epsilon'})(z)$ for $z\in\C_{-}$.
			   \end{lemma}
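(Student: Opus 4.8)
The plan is to \emph{define} $h$ by the contour integral in the statement and then verify, in order, that it is independent of $\epsilon$, is supported in $(-\infty,0]$, satisfies a weighted integrability estimate, and reproduces $g$ under $\Lap$, with uniqueness coming from injectivity of the Laplace transform. The place where $g\in\Hinfone$ (rather than just $g\in\Hinf$) is decisive is the decay $|g(z)|=\mathcal{O}(|z|^{-\alpha})$ with $\alpha>1$: on any vertical line $\{\Re z=-\epsilon\}$ inside $\C_{-}$ one then has $|g(-\epsilon+is)|\le C(1+|s|)^{-\alpha}$, which lies in $L^{1}(\R)\cap L^{2}(\R)$, so the contour integral converges absolutely and $h$ is a genuine function.

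Concretely, for $\epsilon\in(0,\epsilon')$ and $G_{\epsilon}(s):=g(-\epsilon+is)$ set
\[
h(t)=\frac{1}{2\pi i}\int_{i\R-\epsilon}g(z)e^{zt}\,dz=\frac{e^{-\epsilon t}}{2\pi}\int_{\R}G_{\epsilon}(s)e^{ist}\,ds,
\]
so that $h(\cdot)e^{\epsilon\cdot}=\mathcal{F}^{-1}G_{\epsilon}$. First I would show $h$ does not depend on $\epsilon\in(0,\epsilon')$: apply Cauchy's theorem on the rectangle $\{-\epsilon_{2}\le\Re z\le-\epsilon_{1},\ |\Im z|\le T\}\subset\C_{-}$ and let $T\to\infty$, the horizontal sides vanishing because $|g(z)|\le C|\Im z|^{-\alpha}$ there while $|e^{zt}|$ is bounded for fixed $t$. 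Next, $h$ is supported in $(-\infty,0]$: for $t>0$ push the contour to $\Re z=-R$ and let $R\to\infty$, the integral over $\{\Re z=-R\}$ being bounded by a constant times $R^{1-\alpha}e^{-Rt}\to0$; equivalently, the vertical $L^{2}$-means of $g$ are uniformly bounded (again by $\alpha>1$), so $g|_{\C_{-}}\in\Hto$ and the support property is Paley--Wiener.

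The crux is integrability near $-\infty$, and this is where the extra room in $\C_{-}$ is used. Since $g(\cdot-\epsilon')$ is holomorphic and bounded on the strictly larger half-plane $\{\Re z<\epsilon'\}$, one may rewrite $h(t)e^{\epsilon' t}=\frac{1}{2\pi i}\int_{i\R+\delta}g(z-\epsilon')e^{zt}\,dz$ with $\delta=\epsilon'-\epsilon$ and then move this contour to $\Re z=\epsilon'-\eta$ for small $\eta\in(0,\epsilon')$, still inside the domain of analyticity, to obtain for $t<0$
\[
\bigl|h(t)e^{\epsilon' t}\bigr|\le\frac{e^{(\epsilon'-\eta)t}}{2\pi}\int_{\R}|g(-\eta+is)|\,ds\le C_{\eta}\,e^{(\epsilon'-\eta)t}.
\]
This exponential decay, with continuity near $0$, gives $h(\cdot)e^{\cdot\epsilon'}\in L^{1}(\R_{-})$; in particular $\int_{-\infty}^{0}|h(t)|e^{\epsilon' t}\,dt<\infty$, which makes $\Lap(h)(z)=\int_{-\infty}^{0}h(t)e^{-zt}\,dt$ absolutely convergent for $\Re z\le-\epsilon'$. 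That $\Lap(h)=g$ there is Fourier inversion: $h(\cdot)e^{\epsilon\cdot}=\mathcal{F}^{-1}G_{\epsilon}$ is likewise in $L^{1}(\R)$, hence $\mathcal{F}\bigl(h(\cdot)e^{\epsilon\cdot}\bigr)=G_{\epsilon}$, which unwinds to $\Lap(h)(-\epsilon+is)=g(-\epsilon+is)$; letting $\epsilon$ range over $(0,\epsilon')$ and invoking the identity theorem on $\C_{-}$ propagates this to $\{\Re z\le-\epsilon'\}\subset\C_{-}$. Uniqueness: if $h_{1},h_{2}$ both work, then $\Lap(h_{1}-h_{2})$ vanishes on $\{\Re z=-\epsilon'\}$, i.e.\ $\mathcal{F}\bigl(e^{\epsilon'\cdot}(h_{1}-h_{2})\bigr)=0$, so $h_{1}=h_{2}$ a.e. Finally, $g(z-\epsilon')=\Lap\bigl(h(\cdot)e^{\cdot\epsilon'}\bigr)(z)$ for $z\in\C_{-}$ is the substitution $\Lap(h)(z-\epsilon')=\int_{-\infty}^{0}h(t)e^{\epsilon' t}e^{-zt}\,dt$, licit there because $\Re(z-\epsilon')<-\epsilon'$ and $h(\cdot)e^{\cdot\epsilon'}\in L^{1}(\R_{-})$.

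I expect the integrability step to be the real obstacle: without exploiting that $g$ is defined and bounded on a half-plane strictly containing $\{\Re z\le-\epsilon'\}$, the bare decay $|g(z)|=\mathcal{O}(|z|^{-\alpha})$ on $\overline{\C_{-}}$ only yields boundedness of $h$, so one genuinely needs the contour shift across the imaginary axis. The remaining work is bookkeeping — keeping track of which of $G_{\epsilon}$, $h$, $h\,e^{\epsilon\cdot}$, $h\,e^{\epsilon'\cdot}$ sits in $L^{1}$ versus $L^{2}$ so that the single application of Fourier inversion is legitimate, and making the contour estimates uniform in $T$, $R$ and $\eta$.
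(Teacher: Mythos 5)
Your argument is correct, but it takes a genuinely different (and considerably longer) route than the paper's. The paper's proof is a single stroke: for $\Re(z_{0})\leq-\epsilon'$ it writes $g(z_{0})$ by Cauchy's integral formula over the line $i\R-\epsilon$, expands the kernel as $\frac{1}{z-z_{0}}=\int_{0}^{\infty}e^{(z_{0}-z)t}\,dt$ (legitimate since $\Re(z_{0}-z)\leq-(\epsilon'-\epsilon)<0$), and interchanges the integrals by Fubini, justified by $\int_{i\R-\epsilon}|g|<\infty$ (this is the only place $\alpha>1$ enters) together with the gap $\epsilon'-\epsilon$. That produces the Laplace representation and the formula for $h$ simultaneously, with no Fourier inversion, no contour pushing for the support property, and no identity theorem. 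Your route buys explicitness: the support of $h$, the integrability estimate, and uniqueness are all argued rather than left implicit. Two remarks. First, your ``crux'' is not really a shift \emph{across} the imaginary axis: after the substitution $w=z+\epsilon'$ you still only evaluate $g$ at points of $\C_{-}$; the decisive freedom is simply that the original contour may be placed at $\Re z=-\eta$ for arbitrarily small $\eta>0$, which gives $|h(t)|\leq C_{\eta}e^{\eta|t|}$ and hence $|h(t)e^{\epsilon' t}|\leq C_{\eta}e^{(\epsilon'-\eta)t}$ for $t<0$. Second, and more substantively: what you actually establish is $h(\cdot)e^{\cdot\epsilon'}\in L^{1}(\R_{-})$, not the literal claim $h\in L^{1}(\R_{-})$. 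This is not a defect of your argument --- the contour bounds only yield boundedness of $h$ itself, the paper's own proof does not establish unweighted integrability either, and the sole use of the lemma (in the proof of Theorem \ref{thm:coincidenceofcalculi}) requires precisely that $\tilde h=h(\cdot)e^{\cdot\epsilon'}$ lie in $L^{1}(\R_{-})$ so that Lemma \ref{grational} applies to the rescaled semigroup $e^{\epsilon' t}T(t)$. So you have proved the statement in the form in which it is needed; the unweighted $L^{1}$ assertion in the statement should be read (or corrected) as the weighted one.
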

			   \begin{proof}
			   Using Cauchy's formula we get for $\Re(z_{0})\leq-\epsilon'$
			   \begin{align*}
			   	g(z_{0})={}&\frac{1}{2\pi i}\int_{i\R-\epsilon}\frac{g(z)}{z-z_{0}}\ dz=
						\frac{1}{2\pi i}\int_{i\R-\epsilon}g(z)\int_{0}^{\infty}e^{(z_{0}-z)t}\ dt\ dz\\
					    ={}&\int_{0}^{\infty}e^{z_{0}t}\int_{i\R-\epsilon}g(z)e^{-zt}\ dz\ dt,
			 \end{align*}
			 where we used Fubini's theorem, which can be applied because 
			 \begin{equation*}
			 \int_{i\R-\epsilon}|g(z)|\int_{0}^{\infty}|e^{(z_{0}-z)t}|\ dt\ dz\leq\frac{1}{\epsilon'-\epsilon}\int_{i\R-\epsilon}|g(z)|\ dz<\infty,
			 \end{equation*}
			 where the integrability of $|g|$ follows since $g\in\Hinfone$.
			   \end{proof}
			   Next, we prove that $f_{\Lambda}(.-\epsilon)(A+\epsilon\ide)(A)$ equals indeed $f_{\Lambda}(A)$, as one would expect.
			  \begin{lemma}[Rescaling of weakly admissible calculus]\label{lem:rescaling}
			   For $f\in\Hinf$ and $\epsilon>0$ such that the rescaled semigroup $\tilde{T}(t)=e^{\epsilon t}T(t)$ with 
			   generator $A+\epsilon\ide$ is exponentially stable, it holds that 
			   \begin{equation*}
			   g_{\Lambda}(A+\epsilon\ide)=f_{\Lambda}(A),\qquad\text{where }g(z)=f(z-\epsilon)\quad\forall z\in\C_{-}.
			   \end{equation*}
			   
			   \end{lemma}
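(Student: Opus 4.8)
The plan is to prove first that $g(A+\epsilon\ide)=f(A)$ as operators in $\Bo(X_{1},X)$ and then to transfer this to the Lambda extensions. Note that $D(A+\epsilon\ide)=\DA$ with equivalent graph norm, so $X_{1}$ is the same for both generators, and $g(A+\epsilon\ide)$ — built from the weakly admissible output mapping $(x,y)\mapsto\langle y,\tilde T(\cdot)x\rangle$ of the exponentially stable semigroup $\tilde T(\cdot)=e^{\epsilon\cdot}T(\cdot)$ with generator $A+\epsilon\ide$ and the symbol $g\in\Hinf$ — is a genuine element of $\Bo(X_{1},X)$. Granting $g(A+\epsilon\ide)=f(A)$, the coincidence of the Lambda extensions follows from $R(\lambda,A+\epsilon\ide)=R(\lambda-\epsilon,A)$: for $x\in X$,
\[
\lim_{\lambda\to\infty}\lambda\,g(A+\epsilon\ide)R(\lambda,A+\epsilon\ide)x=\lim_{\mu\to\infty}(\mu+\epsilon)\,f(A)\RmA x,
\]
and since $\|\RmA x\|_{1}\to0$ (because $\mu\RmA x\to x$, hence $A\RmA x=\mu\RmA x-x\to0$ and $\RmA x\to0$), the summand $\epsilon f(A)\RmA x$ vanishes in the limit; thus $x\in D(g_{\Lambda}(A+\epsilon\ide))$ iff $x\in D(f_{\Lambda}(A))$ and the values agree.

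For the operator identity on $\DA$ I would apply Theorem \ref{thm:g(A)}.\ref{thm:g(A)item4} twice — to $(A,T(\cdot))$ with symbol $f$, and to $(A+\epsilon\ide,\tilde T(\cdot))$ with symbol $g$. With $v:=\langle y,T(\cdot)x_{1}\rangle$ for $y\in X'$, $x_{1}\in\DA$, and recalling that the scalar Toeplitz operators $M_{f},M_{g}\in\Bo(\Ltwo)$ do not depend on the semigroup, this gives $\langle y,f(A)T(\cdot)x_{1}\rangle=M_{f}v$ and, using $\tilde T(t)=e^{\epsilon t}T(t)$, $\langle y,g(A+\epsilon\ide)\tilde T(\cdot)x_{1}\rangle=M_{g}(\langle y,\tilde T(\cdot)x_{1}\rangle)=M_{g}(e^{\epsilon\cdot}v)$. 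Since also $\langle y,f(A)\tilde T(t)x_{1}\rangle=e^{\epsilon t}\langle y,f(A)T(t)x_{1}\rangle$, everything reduces to the intertwining identity
\[
M_{g}(e^{\epsilon\cdot}v)=e^{\epsilon\cdot}M_{f}v\qquad\text{for }g(z)=f(z-\epsilon),
\]
valid for every $v$ that decays exponentially at a rate strictly larger than $\epsilon$; the $v$ above is of this kind because exponential stability of $\tilde T$ forces the growth bound $\omega$ of $T$ to satisfy $\omega<-\epsilon$. Granting the identity, the two continuous functions $t\mapsto\langle y,g(A+\epsilon\ide)\tilde T(t)x_{1}\rangle$ and $t\mapsto\langle y,f(A)\tilde T(t)x_{1}\rangle$ agree a.e.\ and hence everywhere, and $t=0$ yields $\langle y,g(A+\epsilon\ide)x_{1}\rangle=\langle y,f(A)x_{1}\rangle$ for all $y\in X'$, i.e.\ $g(A+\epsilon\ide)x_{1}=f(A)x_{1}$.

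To prove the intertwining identity I would take Laplace transforms and shift a contour. Pick $\eta\in(\epsilon,-\omega')$ for some $\omega'>\omega$, so that $e^{\eta\cdot}v\in\Ltwo$ and $\Lap v$ is analytic and bounded on $\{\Re w\geq-\eta\}$, tending to $0$ along vertical lines there; then $f\cdot\Lap v$ is analytic and bounded on the closed strip $\{-\eta\leq\Re w\leq0\}$ and vanishes at $\pm i\infty$. For $\Re s>\epsilon$, writing $\Pi$ as its Cauchy (Riesz-projection) integral and using $\Lap(e^{\epsilon\cdot}v)(w)=\Lap v(w-\epsilon)$ and $g(w)=f(w-\epsilon)$,
\[
\Lap[M_{g}(e^{\epsilon\cdot}v)](s)=\Pi\big(g\cdot\Lap(e^{\epsilon\cdot}v)\big)(s)=\frac{1}{2\pi i}\int_{-\epsilon+i\R}\frac{(f\cdot\Lap v)(w)}{w-(s-\epsilon)}\,dw=\frac{1}{2\pi i}\int_{i\R}\frac{(f\cdot\Lap v)(w)}{w-(s-\epsilon)}\,dw,
\]
the last equality being the contour shift from $\Re w=-\epsilon$ to $\Re w=0$, legitimate because the integrand is analytic in the strip in between and the pole $w=s-\epsilon$ lies in $\{\Re w>0\}$. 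The last integral equals $\Pi(f\cdot\Lap v)(s-\epsilon)=\Lap[M_{f}v](s-\epsilon)=\Lap[e^{\epsilon\cdot}M_{f}v](s)$ for $\Re s$ large, so injectivity of $\Lap$ yields $M_{g}(e^{\epsilon\cdot}v)=e^{\epsilon\cdot}M_{f}v$ (and in particular $e^{\epsilon\cdot}M_{f}v\in\Ltwo$).

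The delicate point — the only genuinely analytic step — is the contour shift: one must be sure that $f\cdot\Lap v$ really has the required $L^{2}$ boundary behaviour on $i\R$ (where $f$ is only given through its boundary function) and that the horizontal connecting segments of the rectangle vanish; both hold because $f$ is bounded on $\C_{-}$ and $v$ decays at rate $>\eta>\epsilon$, but it is the heart of the argument. An alternative that circumvents this is to prove the statement first for regularised symbols: approximating $f$ by $f_{n}(z)=f(z)(1-z/n)^{-2}$, both $f_{n}$ and $g_{n}(z):=f_{n}(z-\epsilon)$ lie in $\Hto\cap\Hinf$ and are Laplace transforms of $L^{2}(\R_{-})$-functions $h_{f_{n}}$ and $h_{g_{n}}=e^{\epsilon\cdot}h_{f_{n}}$, so by Lemma \ref{grational} (and the $L^{p}$-extension in the remark following it) the operators $f_{n}(A)x=\int_{0}^{\infty}h_{f_{n}}(-s)T(s)x\,ds$ and $g_{n}(A+\epsilon\ide)x=\int_{0}^{\infty}h_{g_{n}}(-s)\tilde T(s)x\,ds$ coincide on $\DA$ by the one-line identity $e^{-\epsilon s}h_{f_{n}}(-s)\,e^{\epsilon s}=h_{f_{n}}(-s)$; finally $f_{n}\to f$ and $g_{n}\to g$ pointwise and boundedly gives $M_{f_{n}}w\to M_{f}w$ and $M_{g_{n}}w\to M_{g}w$ in $\Ltwo$, which together with \eqref{LgyisLap} transfers $f_{n}(A)=g_{n}(A+\epsilon\ide)$ on $\DA$ to $f(A)=g(A+\epsilon\ide)$.
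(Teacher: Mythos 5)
Your proof is correct, and its skeleton coincides with the paper's: both reduce the lemma to the intertwining identity $M_{g}(e^{\epsilon\cdot}v)=e^{\epsilon\cdot}M_{f}v$ for $g(z)=f(z-\epsilon)$ and $v=\langle y,T(\cdot)x_{1}\rangle$, then evaluate at $t=0$. Where you genuinely diverge is in how that identity is proved. The paper stays in the time domain: it notes that $\Pi$ is (up to the Laplace/Fourier identification) restriction to $(0,\infty)$, which commutes with multiplication by the exponential $e^{\epsilon t}$; equivalently, the Riesz projection commutes with the horizontal translation $F\mapsto F(\cdot-\epsilon)$ of the symbol. That three-line computation silently absorbs the very point you isolate as delicate, namely that $(f\cdot\Lap v)(\cdot-\epsilon)|_{i\R}$ is the Fourier transform of $e^{\epsilon t}$ times the inverse transform of $(f\cdot\Lap v)|_{i\R}$, which holds because $v$ decays at a rate strictly exceeding $\epsilon$. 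You instead work in the frequency domain and shift the contour from $-\epsilon+i\R$ to $i\R$; this makes the analytic content explicit at the price of having to discuss the $L^{2}$ boundary behaviour of $f\cdot\Lap v$ on $i\R$, and your fallback via the regularizers $f_{n}(z)=f(z)(1-z/n)^{-2}$ together with Lemma \ref{grational} is a clean, self-contained way around it. You are also more careful than the paper in passing from the identity $g(A+\epsilon\ide)=f(A)$ on $\DA$ to the Lambda extensions: the paper compresses this into one sentence, whereas your use of $R(\lambda,A+\epsilon\ide)=R(\lambda-\epsilon,A)$ and the vanishing of $\epsilon f(A)\RmA x$ (which also follows directly from (\ref{weakWeiss})) supplies the missing detail. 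Both routes are sound; the paper's is shorter, yours is more explicit about where the analysis actually happens.
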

			    \begin{proof}
			   The proof relies on the fact that the projection $\Pi$ is translation-invariant. In fact, for $h\in L^{2}(\R)$,
			   \begin{align*}
			   	\Big(\Pi\mathcal{F}(h)\Big)(.-\epsilon)={}&\mathcal{F}(h|_{(0,\infty)})(.-\epsilon)=\mathcal{F}\Big(e^{i\epsilon x}\cdot(h(x)|_{(0,\infty)})\Big)(.)\\
					={}&\mathcal{F}\Big((e^{i\epsilon x}\cdot h(x))|_{(0,\infty)}\Big)(.)=\Pi\Big(\mathcal{F}(e^{i\epsilon x}\cdot h(x))\Big)(.)\\
					={}&\Pi\Big(\mathcal{F}(h)(.-\epsilon)\Big)(.)
			\end{align*}
			Using this and $\Lap(h)(.-\epsilon)=\Lap\Big(e^{\epsilon .}h(.)\Big)$, we see that for $x\in X,y\in X',t\geq0$ 
			\begin{align*}
			\Big[M_{g}\langle y,\tilde{T}(.)x\rangle\Big](t)={}&\Lap^{-1}\Pi\Big(\langle y,f(i.-\epsilon)\cdot\Lap(T(.)x)(i.-\epsilon)\rangle\Big)(t)\\
				={}&\Lap^{-1}\Big[\Pi\Big(\langle y,f(i.)\cdot\Lap(T(.)x)(i.)\rangle\Big)(.-\epsilon)\Big](t)\\
				={}&e^{\epsilon t}\Big[M_{f}\langle y,T(.)x\rangle\Big](t).
			\end{align*}
			By (\ref{eq:thm:g(A)X}) and letting $t\to0^{+}$ this yields the assertion.
			\end{proof}
			The following elementary result will be needed in the proof of the upcoming theorem.
			   \begin{lemma}\label{lem:LambdaExtprop}
			   Let $A$ generate a semigroup $T(t)$ and let $B$ be a closed operator such that 
			   $D(A)\subset D(B)$ and $B\RlA x=\RlA Bx$ on $D(A)$ for some $\lambda\in\rho(A)$. 
			   Let $x\in X$ such that $B\RlA x\in\DA$. Then, 
			   \begin{equation*}
			   x\in D(B)\quad\text{ and }\quad Bx=(\lambda-A)B\RlA x.
			   \end{equation*}
			   \end{lemma}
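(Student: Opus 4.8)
The plan is to push everything through the bounded operator $S:=B\RlA$. First I would note that $S$ is defined on all of $X$, since $\RlA$ maps $X$ into $\DA\subset D(B)$, and that it is bounded: $S$ is a closed operator, because if $x_n\to x$ in $X$ and $Sx_n\to v$, then $\RlA x_n\to\RlA x$ and $B(\RlA x_n)\to v$, so the closedness of $B$ gives $\RlA x\in D(B)$ (which is automatic) and $B\RlA x=v$; hence $S$ is a closed, everywhere defined operator on the Banach space $X$ and the closed graph theorem yields $S\in\Bo(X)$.

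Next I would extract the two identities needed. For $u\in\DA$ one has $\RlA(\lambda\ide-A)u=u$, so $Bu=B\RlA(\lambda\ide-A)u=S(\lambda\ide-A)u$; thus $B$ coincides with $S(\lambda\ide-A)$ on $\DA$. Applying the hypothesis $B\RlA u=\RlA Bu$ (valid for $u\in\DA$) to $u=\RlA x\in\DA$ gives $S\RlA x=B\RlA(\RlA x)=\RlA B(\RlA x)=\RlA Sx$, so the bounded operator $S$ commutes with $\RlA$. By the standard argument this forces $S$ to leave $\DA$ invariant with $SAu=ASu$ on $\DA$, hence $S$ commutes with every resolvent $R(n,A)$ for large real $n$ (which lie in $\rho(A)$ since $A$ generates a semigroup): indeed, for $w\in X$, $SR(n,A)w\in\DA$ and $(n\ide-A)SR(n,A)w=S(n\ide-A)R(n,A)w=Sw$, so $SR(n,A)w=R(n,A)Sw$.

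Then comes the approximation step. I would set $u_n:=nR(n,A)x\in\DA$, so that $u_n\to x$ by \eqref{semipropconv}. Using $Bu_n=S(\lambda\ide-A)u_n$, the identity $(\lambda\ide-A)R(n,A)=\ide+(\lambda-n)R(n,A)$, and the commutation of $S$ with $R(n,A)$, one computes
\[
Bu_n=S\big(nx+n(\lambda-n)R(n,A)x\big)=nSx+n(\lambda-n)R(n,A)Sx .
\]
Since $Sx=B\RlA x\in\DA$ by assumption, $nR(n,A)Sx=Sx+R(n,A)ASx$, whence $Bu_n=\lambda Sx+(\lambda-n)R(n,A)ASx$; and because $(\lambda-n)R(n,A)ASx=\tfrac{\lambda-n}{n}\,nR(n,A)(ASx)\to-ASx$ by \eqref{semipropconv}, we obtain $Bu_n\to(\lambda\ide-A)Sx$. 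The closedness of $B$ together with $u_n\to x$ then yields $x\in D(B)$ and $Bx=(\lambda\ide-A)Sx=(\lambda\ide-A)B\RlA x$, as claimed.

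I do not expect a serious obstacle here; the only genuinely load-bearing observations are that $S=B\RlA$ is bounded and that it commutes with the resolvents of $A$, after which the statement is just the computation above plus the closedness of $B$. The only points demanding a little care are the passage from "$S$ commutes with $\RlA$" to "$S$ commutes with $R(n,A)$" (equivalently, with $A$), and keeping track of domains when writing $Bu=S(\lambda\ide-A)u$ on $\DA$.
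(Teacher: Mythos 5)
Your proof is correct, but it follows a genuinely different route from the paper's. The paper works with the semigroup itself: it sets $A_{n}=n(T(1/n)-\ide)$, notes that the resolvent commutation forces $BT(t)y=T(t)By$ for $y\in\DA$ (via injectivity of the Laplace transform), and then applies the closedness of $B$ to the sequence $z_{n}=A_{n}\RlA x\to A\RlA x$, for which $Bz_{n}=A_{n}B\RlA x\to AB\RlA x$ because $B\RlA x\in\DA$; the conclusion follows from $x=\lambda \RlA x-A\RlA x$ with both summands in $D(B)$. You instead stay entirely at the resolvent level: you package everything into $S=B\RlA$, check algebraically that $S$ commutes with every $R(n,A)$, and apply closedness of $B$ directly to $u_{n}=nR(n,A)x\to x$, computing $Bu_{n}=S(\lambda\ide-A)u_{n}\to(\lambda\ide-A)Sx$ using $Sx\in\DA$. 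Your version buys you two things: it avoids the semigroup and the (somewhat glossed-over) Laplace-transform step needed to pass from resolvent commutation to semigroup commutation, and all limits are taken against fixed vectors via $nR(n,A)y\to y$, so the argument is self-contained given only the resolvent identity. (Incidentally, your closed-graph-theorem step establishing $S\in\Bo(X)$ is never actually used — only the facts that $S$ is everywhere defined and linear enter the computation — so it could be dropped.) The paper's version is shorter once one grants the semigroup commutation, and it exhibits the intermediate fact $A\RlA x\in D(B)$ with $BA\RlA x=AB\RlA x$ explicitly, which is mildly informative in its own right.
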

                            \begin{proof}
                            Define $A_{n}=n(T(1/n)-\ide)\in\Bo(X)$ for $n\in\N$. Then,
                            \begin{equation*}
                            z_{n}=A_{n}\RlA x\to A \RlA x,\qquad\text{ as }n\to\infty.
                            \end{equation*}
                            Since $B$ is closed and commutes with some resolvent, by taking the Laplace transform, it follows that $T(t)By=BT(t)y$ for $y\in\DA$ and $t\geq0$. Thus, since $z_{n}\in\DA$ and $B\RlA x\in\DA$,
                            \begin{equation*}
                            Bz_{n}=A_{n}B\RlA x\to A B\RlA x,\qquad\text{ as }n\to\infty.
                            \end{equation*}
                            By closedness of $B$,  $A\RlA x\in D(B)$ and $BA\RlA x=AB \RlA$. 
                            From $A\RlA x=\lambda\RlA x-x$ and because $\RlA x\in D(B)$, the assertion follows.
                             \end{proof}
			  			   Now, we are able to compare our weakly admissible calculus with the calculus for half-plane operators.
			   \begin{theorem}[Coincidence of Calculi]\label{thm:coincidenceofcalculi}
			   Let $A$ generate an exponentially stable semigroup $T(t)$. For all $f\in\Hinf$,
			   \begin{equation*}
			   f_{\Lambda}(A)=f_{HP}(A),
			   \end{equation*}
			   where $f_{\Lambda}(A)$ is the weakly admissible calculus and 
			   $f_{HP}(A)$ denotes the natural calculus for half-plane-operators from \cite{battyhaasemubeen,mubeenPhD}.
			   
			   \end{theorem}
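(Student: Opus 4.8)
The plan is to first identify the two calculi on the subalgebra $\Hinfone$, where both reduce to a Phillips-type integral, and then to transport the identity to all of $\Hinf$ through the regulariser $e(z)=(1-z)^{-2}$ on which the half-plane calculus is built. For the first step, fix $\phi\in\Hinfone$. By Lemma~\ref{lem:halfplanelaplace} (after a harmless rescaling via Lemma~\ref{lem:rescaling}, if one wants to stay away from the boundary of $\C_{-}$) the function $\phi$ is the Fourier transform of some $h\in L^{1}(\R_{-})$, so Lemma~\ref{grational} gives
\begin{equation*}
\phi_{\Lambda}(A)x=\int_{0}^{\infty}h(-s)\,T(s)x\ ds,\qquad x\in\DA,
\end{equation*}
with $\phi_{\Lambda}(A)\in\Bo(X)$. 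On the other hand, inserting the Laplace representation $R(z,A)x=\int_{0}^{\infty}e^{-zt}T(t)x\,dt$ into the defining contour integral of $\phi_{HP}(A)$ and applying Fubini's theorem (legitimate since $\phi$ is integrable on the contour $\Re z=-\epsilon$, $0<\epsilon<-\omega$, and $\|T(t)x\|\le Me^{\omega t}$ with $\omega+\epsilon<0$), followed by the inversion formula for $h$ in Lemma~\ref{lem:halfplanelaplace}, produces the \emph{same} integral. Hence $\phi_{\Lambda}(A)=\phi_{HP}(A)$ on $\DA$ and, both being bounded, as operators on $X$. In particular, for $e(z)=(1-z)^{-2}$ and any $f\in\Hinf$ (note $e,ef\in\Hinfone$),
\begin{equation*}
e_{\Lambda}(A)=e_{HP}(A)=:E\in\Bo(X),\qquad (ef)_{\Lambda}(A)=(ef)_{HP}(A).
\end{equation*}

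Since the half-plane calculus is, by construction, the meromorphic extension $f_{HP}(A)=E^{-1}(ef)_{HP}(A)$ with $E$ injective, it suffices (by the first step) to show that the weakly admissible calculus obeys the \emph{same} regulariser identity: for all $f\in\Hinf$,
\begin{equation*}
D(f_{\Lambda}(A))=\{x\in X:(ef)_{\Lambda}(A)x\in\operatorname{ran}E\},\qquad f_{\Lambda}(A)x=E^{-1}(ef)_{\Lambda}(A)x.
\end{equation*}
Writing $e=g^{2}$ with $g(z)=(1-z)^{-1}$, the homomorphism property on $\HB$ (Theorem~\ref{2mainthm}) gives $E=R(1,A)^{2}$; combining $(fe)(A)x=f(A)e(A)x$ on $\DAt$ (eq.~(\ref{thm:coincidenceonDAt})) with the commutation relation~(\ref{g(A)commuteA}) and passing to the bounded extension yields the pointwise identity
\begin{equation*}
(ef)_{\Lambda}(A)x=f(A)R(1,A)^{2}x=R(1,A)\big(f(A)R(1,A)x\big),\qquad x\in X.
\end{equation*}
For the inclusion ``$\subseteq$'': if $x\in D(f_{\Lambda}(A))$, then $R(1,A)$ commutes with $f_{\Lambda}(A)$ and preserves its domain (Theorem~\ref{2mainthm}), so $(ef)_{\Lambda}(A)x=f_{\Lambda}(A)Ex=Ef_{\Lambda}(A)x\in\operatorname{ran}E$ and $E^{-1}(ef)_{\Lambda}(A)x=f_{\Lambda}(A)x$. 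For ``$\supseteq$'': assume $(ef)_{\Lambda}(A)x\in\operatorname{ran}E=\DAt$; by the identity above and the elementary equivalence $R(1,A)u\in\DAt\Leftrightarrow u\in\DA$ this forces $f(A)R(1,A)x\in\DA$. Hence $f_{\Lambda}(A)R(1,A)x=f(A)R(1,A)x\in\DA$, and Lemma~\ref{lem:LambdaExtprop} applied to the closed operator $B=f_{\Lambda}(A)$ (which extends $f(A)$ on $\DA$ and commutes with $R(1,A)$) with $\lambda=1$ gives $x\in D(f_{\Lambda}(A))$ and $f_{\Lambda}(A)x=(\ide-A)f(A)R(1,A)x$; by the identity above and $(\ide-A)R(1,A)=\ide$ this equals $E^{-1}(ef)_{\Lambda}(A)x$. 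Combining with the first step, $f_{HP}(A)=E^{-1}(ef)_{HP}(A)=E^{-1}(ef)_{\Lambda}(A)=f_{\Lambda}(A)$ for every $f\in\Hinf$.

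The hard part is the second step: both $f_{\Lambda}(A)$ and $f_{HP}(A)$ are genuinely unbounded, so the crux is matching their domains, which is precisely what Lemma~\ref{lem:LambdaExtprop} together with the multiplicativity of the $\Lambda$-calculus (Theorem~\ref{mainthm}) is tailored for; the bookkeeping with products of resolvents and the passage between $f(A)$ on $\DA$ and its Lambda extension is where the care is needed. A subordinate technical nuisance is making the normalisations in the first step agree literally — this is automatic, since both calculi are pinned down by their values on the resolvents, equivalently by the Phillips representation $x\mapsto\int_{0}^{\infty}h(-s)T(s)x\,ds$.
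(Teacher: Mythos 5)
Your proposal is correct and follows essentially the same route as the paper: coincidence on $\Hinfone$ via the Phillips-type integral (Lemmas \ref{lem:halfplanelaplace}, \ref{grational} and the rescaling Lemma \ref{lem:rescaling}), followed by extension to all of $\Hinf$ through the regulariser $e(z)=(1-z)^{-2}$, with the domain matching carried by Lemma \ref{lem:LambdaExtprop} and the commutation and multiplicativity properties of the Lambda calculus. The only point to phrase slightly more carefully is that the rescaling in the first step is not optional: Lemma \ref{lem:halfplanelaplace} represents $\phi$ as a Laplace transform only on $\Re z\leq-\epsilon'$, so Lemma \ref{grational} must be applied to the rescaled semigroup before invoking Lemma \ref{lem:rescaling} --- exactly as the paper does and as you indicate parenthetically.
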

			   \begin{proof}
			   First assume that $f\in\Hinfone$. 
			   Choose $\epsilon,\epsilon'$ such that $\omega<-\epsilon'<-\epsilon<0$, where $\omega$ is the growth bound of $T(t)$ and let $x\in X$.
			   By definition of the half-plane calculus and basic semigroup theory,
			   \begin{align*}
			    f_{HP}(A)x:={}&\int_{i\R-\epsilon}f(z)R(z,A)x\ dz=\int_{i\R-\epsilon}f(z)\int_{0}^{\infty}e^{-zt}T(t)x\ dt\ dz\\
			    		     ={}&\int_{0}^{\infty}\int_{i\R-\epsilon}f(z)e^{-zt}\ dz\ T(t)x\ dt=\int_{0}^{\infty}h(-t)T(t)x\ dt,
			    \end{align*}
			    where we used Fubini's theorem and Lemma \ref{lem:halfplanelaplace}. 
			    Let $\tilde{h}(.)=h(.)e^{.\epsilon'}$ and $\tilde{T}(t)=e^{\epsilon't}T(t)$, which is exponentially bounded.
			    Since $f(z-\epsilon')=\Lap(\tilde{h})(z)$ for $z\in\C_{-}$ and $\tilde{h}$ is in $L^{1}(\R_{-})$, 
			    we can apply Lemma \ref{grational} to $\tilde{T}$,
			    \begin{equation*}
			    \int_{0}^{\infty}h(-t)T(t)x\ dt=\int_{0}^{\infty}\tilde{h}(-t)\tilde{T}(t)x\ dt=f(.-\epsilon')_{\Lambda}(A+\epsilon'\ide)x.
			    \end{equation*}
			    By Lemma \ref{lem:rescaling}, we know that $f(.-\epsilon')_{\Lambda}(A+\epsilon'\ide)=f_{\Lambda}(A)$. 
			    Therefore,  
			    \begin{equation}\label{eq:coincalcH1}
			       f_{\Lambda}(A)=f_{HP}(A) \qquad \forall f\in\Hinfone.
			      \end{equation}
			    For general $f\in\Hinf$, by definition,
			    \begin{align*}
			    f_{HP}(A)={}&(\ide-A)^{2}(f\cdot e)_{HP}(A),\\D(f_{HP}(A))={}&\left\{x\in X:(f\cdot e)_{HP}(A)x\in \DAt\right\}
			    \end{align*}
			    where $e(z):=(1-z)^{-2}\in\Hinfone$. Since $(f\cdot e)\in\Hinfone$, by (\ref{eq:coincalcH1}), 
			    we have, for $x\in X$,
			    \begin{equation}\label{eq:coincalceq2}
			  (f\cdot e)_{HP}(A)x=(f\cdot e)_{\Lambda}(A)x=f_{\Lambda}(A)(\ide-A)^{-2}x,	
		           \end{equation}
			    where the last equality follows from Theorems \ref{2mainthm}.\ref{mainthm2:2}.\ , \ref{mainthm}.\ref{mainthm3}.\ .
			    If $x\in D(f_{\Lambda}(A))$, Lemma \ref{LebExt} implies
			     \begin{equation}\label{eq:coincalc2}
			    	f_{\Lambda}R(1,A)^{2}x=R(1,A)^{2}f_{\Lambda}(A)x,
			 \end{equation}
			 and thus, by (\ref{eq:coincalceq2}), $x\in D(f_{HP}(A))$ and $f_{\Lambda}(A)x=f_{HP}(A)x$. 
			 Thus, $f_{\Lambda}(A)\subset f_{HP}(A)$. 
			 If $x\in D(f_{HP}(A))$, we deduce that $f_{\Lambda}(A)R(1,A)^{2}x$ lies in $\DAt$  by(\ref{eq:coincalceq2}). 
			  Since $R(1,A)x\in D(f_{\Lambda}(A))$, using Lemma \ref{LebExt} again, 
			  we obtain  that $f_{\Lambda}(A)R(1,A)x\in\DA$.  
			  Thus, since $f_{\Lambda} $ is closed and commutes with the resolvents of $A$, 
			  Lemma \ref{lem:LambdaExtprop} yields $x\in D(f_{\Lambda}(A))$.
			   \end{proof}
			  \subsection{Discussion $\&$ Remarks}
			  Although we only consider exponentially stable semigroups in this work, Lemma \ref{lem:rescaling} shows how to define our calculus for more general $C_{0}-$semigroups.
			  \begin{definition}
			  Let $A$ generate a semigroup $T(.)$ with growth bound $\omega$ and let $v>\omega$. 
			  For $f\in\mathcal{H}(L_{v})$, where $L_{v}=\left\{z\in\C:\Re z<v\right\}$, we define
			  \begin{equation*}
			  f(A):=f(.+v)(A-v\ide)
			  \end{equation*}
			  where the right-hand-side is defined since $f(.+v)\in\Hinf$ and 
			  $A-v\ide$ generates the exponentially stable semigroup $e^{-vt}T(t)$.
			  \end{definition}
			  Theorem \ref{thm:coincidenceofcalculi} gives rise to some comments. 
			  First of all, we can immediately make use of the known results for the natural (half-plane)-calculus as for instance the
			  following important continuity result.
			  \begin{theorem}[Convergence Lemma, Prop.\ 3.3.5 in\cite{mubeenPhD} or Thm.\ 3.1 in \cite{battyhaasemubeen}]
			  \label{thm:convergencelemma}
			  Let $A$ generate a semigroup $T(.)$ with growth bound $\omega<0$. Let $v>\omega$ and 
			  $(f_{n})_{n\in\N}\subset\mathcal{H}(L_{v})$, where $L_{v}=\left\{z\in\C:\Re z<v\right\}$, such that 
			  \begin{equation}\label{weakstarconv}
			  f_{n}(t)\to f(t) \ \forall t\in L_{v}\text{ as }n\to\infty,\text{ and }\sup_{n\in\N}\|f_{n}\|_{\infty}<\infty.
			  \end{equation}
			  Then, $f\in\mathcal{H}(L_{v})$ and
			  \begin{equation*}
			  	f_{n}(A)x\to f(A)x\qquad\text{ for all }x\in\DA.
			\end{equation*}
			  \end{theorem}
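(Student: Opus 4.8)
The plan is to reduce to an exponentially stable semigroup, transport the statement to the half-plane calculus via Theorem \ref{thm:coincidenceofcalculi}, and there exploit the Cauchy-type integral representation to run a dominated-convergence argument; the remaining work is a density step based on the uniform bound from Theorem \ref{thm:g(A)}. First I would normalise: by construction $f_n(A)=f_n(\cdot+v)(A-v\ide)$, where $A-v\ide$ generates the exponentially stable semigroup $e^{-vt}T(t)$, so (replacing $A$ by $A-v\ide$ and each $f_n$ by $f_n(\cdot+v)$) we may assume $T(\cdot)$ is exponentially stable, $(f_n)_{n\in\N}\subset\Hinf$, $f_n\to f$ pointwise on $\C_-$, and $M:=\sup_n\|f_n\|_\infty<\infty$. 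That $f\in\Hinf$ — hence, undoing the shift, $f\in\mathcal{H}(L_v)$ — follows from Vitali's theorem: a locally uniformly bounded, pointwise convergent sequence of holomorphic functions on $\C_-$ converges locally uniformly, so $f$ is holomorphic, and clearly $\|f\|_\infty\le M$.

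Next, the quantitative input. By Theorem \ref{thm:g(A)} — concretely the estimate behind (\ref{ineq:Lgy}), namely $\|g(A)x_1\|\le b_2\|g\|_\infty\|x_1\|_1$ — the operators $f_n(A)$ and $f(A)$ all lie in $\Bo(X_1,X)$ with $\sup_n\|f_n(A)\|_{\Bo(X_1,X)}\le b_2M$. Hence it suffices to prove $f_n(A)x\to f(A)x$ for $x$ in a subset of $D(A)$ that is dense for the graph norm, and I would take $D(A^2)$, which is a core for $A$ (e.g.\ $\mu R(\mu,A)x_1\to x_1$ in graph norm). Indeed, granting convergence on $D(A^2)$: for $x\in D(A)$ and $\delta>0$ pick $z\in D(A^2)$ with $\|x-z\|_1<\delta$; then the triangle inequality and the uniform bound give $\limsup_n\|f_n(A)x-f(A)x\|\le 2b_2M\delta$, and $\delta\to0$ finishes it.

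It remains to treat $x\in D(A^2)$. Put $w=(\ide-A)^2x$, so $x=R(1,A)^2w$ (here $1\in\rho(A)$ since $\omega<0$), and fix $\epsilon$ with $\omega<-\epsilon<0$. Combining Theorems \ref{2mainthm}.\ref{mainthm2:2}.\ and \ref{mainthm}.\ref{mainthm3}.\ with Theorem \ref{thm:coincidenceofcalculi} and the definition of the half-plane calculus on $\Hinfone$ — applied to $f_n\cdot e$ and $f\cdot e$, where $e(z)=(1-z)^{-2}\in\Hinfone$ — one gets
\[
f_n(A)x=(f_n\cdot e)_{HP}(A)w=\int_{i\R-\epsilon}\frac{f_n(z)}{(1-z)^2}\,R(z,A)w\ dz,
\]
and the same identity with $f$ in place of $f_n$. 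Since $A$ is a half-plane operator, $\|R(z,A)\|$ stays bounded along the line $\Re z=-\epsilon$, while $|1-z|^{-2}$ is integrable there; so the integrand is bounded by $M\,|1-z|^{-2}\|R(z,A)\|\,\|w\|$, an integrable majorant independent of $n$, and it converges pointwise to $f(z)(1-z)^{-2}R(z,A)w$. Dominated convergence gives $f_n(A)x\to f(A)x$, completing the proof.

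The step I expect to be the real obstacle is this last integral estimate: producing an $n$-independent integrable majorant is precisely where both the half-plane resolvent bound on a vertical line and the quadratic decay of the regularizer $e$ are used. If one prefers to stay inside the weakly admissible calculus and avoid the half-plane calculus altogether, the same conclusion is reachable by applying Lemma \ref{grational} to the rescaled semigroup $e^{\epsilon' t}T(t)$ (with $\omega<-\epsilon'<-\epsilon<0$): then $f_n\cdot e=\Lap(h_n)$ with $h_n\in L^1(\R_-)$ given by an inverse-Laplace contour integral, from which $|h_n(-s)|\le CMe^{(\epsilon-\epsilon')s}$ uniformly in $n$, and dominated convergence in the time variable finishes the argument — the analytic content, regularise then dominate, being the same.
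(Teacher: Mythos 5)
Your argument is correct, but it is worth pointing out that the paper does not actually prove this statement: Theorem \ref{thm:convergencelemma} is imported verbatim from \cite{mubeenPhD} and \cite{battyhaasemubeen}, and the only in-text commentary is Remark \ref{rem:convergencelemma}, a heuristic observation that pointwise convergence of the symbols plus dominated convergence gives $M_{f_n}h\to M_fh$ in $\Ltwo$ (which by itself only yields a weak, output-mapping form of convergence, not the norm convergence $f_n(A)x\to f(A)x$ asserted). What you have written is essentially the standard proof of the Convergence Lemma for half-plane operators from the cited sources --- regularise by $e(z)=(1-z)^{-2}$, represent $(f_ne)_{HP}(A)$ as a contour integral, dominate uniformly in $n$ using the resolvent bound on a vertical line and the quadratic decay of $e$, then remove the regulariser by a density argument --- but routed through the paper's own machinery: the shift convention $f(A)=f(\cdot+v)(A-v\ide)$, the uniform bound $\|g(A)x_1\|\le b_2\|g\|_\infty\|x_1\|_1$ from Lemma \ref{Lem3} (you attribute it to Theorem \ref{thm:g(A)}, a harmless slip), the identity $(f_ne)_{\Lambda}(A)w=f_n(A)R(1,A)^2w$ from Theorems \ref{2mainthm} and \ref{mainthm}, and Theorem \ref{thm:coincidenceofcalculi}; there is no circularity, since the coincidence theorem is proved without the Convergence Lemma. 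Your closing alternative, staying inside the weakly admissible calculus via Lemma \ref{grational} and Lemma \ref{lem:halfplanelaplace} applied to the rescaled semigroup, is also sound and has the merit of making the paper self-contained without invoking the half-plane calculus at all; either version is a genuine proof where the paper offers only a citation.
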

			  
			 \begin{remark}[to Theorem \ref{thm:convergencelemma}]\label{rem:convergencelemma}
			 \begin{enumerate}
			 \item
			 The Convergence Lemma is not surprising in the view of the Toeplitz operator: 
			 By rescaling assume w.l.o.g that $v>0$. 
			 Therefore, the functions $f_{n}$ converge pointwise on $i\R$ and by Dominated Convergence one can see that
			 \begin{equation*}
			 M_{f_{n}}h\to M_{f}h\ \text{ in }\Ltwo\qquad \text{ as }n\to\infty. 
			 \end{equation*}
			 for $h\in\Ltwo$.
			 \item By \cite[Prop. F.4.]{haasesectorial}, 
			 for any $f\in\mathcal{H}(L_{v})$ there exists a sequence of rational functions $r_{n}$ from
			 \begin{equation*}
			 	\mathcal{R}^{\infty}(L_{v})=\left\{\frac{p}{q}:p,q\in\C[z],deg (p )\leq deg(q),\text{poles of }q\text{ are in } R_{v}\right\},
			\end{equation*}
			such that $r_{n}\to f$ pointwise on $L_{v}$ and $\|r_{n}\|_{\infty}\leq\|f\|_{\infty}$.
			 \end{enumerate}
			 \end{remark}
			 By Theorem \ref{thm:convergencelemma} and Remark \ref{rem:convergencelemma} 
			 we observe that the calculus is built of  approximations by simple operators. 
			 Thus, it often suffices to restrict on functions in $\mathcal{R}^{\infty}$ (for $v\leq0$, 
			 these are Laplace transforms of $L^{1}$ functions), to show a property of the calculus.\\
			 Although the calculi are the same, the construction of our weakly admissible calculus and 
			 the natural (half-plane) calculus is quite different. 
			 We want to emphasize that a meromorphic calculus, 
			 \cite[Chapter 1]{haasesectorial} is defined in a purely algebraic way. Namely, when
			 extending the primary calculus to more general functions by using regularizer. 
			 However, in our construction a crucial step was to take the Lambda extension of $f(A)$.\\
			 Finally, let us mention the following representation of the Toeplitz operator $M_{g}$,
			 \begin{equation*}
			 M_{g}h=\Big(\mathcal{F}^{-1}\big(g(i.)\cdot[\mathcal{F}h](.)\big)\Big)|_{(0,\infty)},\qquad h\in\Ltwo,g\in\Hinf,
			 \end{equation*}
			which has been used a couple of times in this work. 
			This shows the relation to Fourier multipliers, which occurred already in the study of the $\Hinf$ calculus, e.g. in 
			\cite{haasetransference}
			


\section*{Acknowledgments} 
We would like to thank Markus Haase and Jan Rozendaal for fruitful discussions.
The first author has been supported by the Netherlands Organisation for Scientific Research (NWO) within the project \textit{Semigroups with an Inner Function calculus}, grant no.\ 613.001.004.
    \small

\end{document}